\newtheorem{lemma}{Lemma}%[section]
\newtheorem{theorem}[lemma]{Theorem}
\newtheorem{corollary}[lemma]{Corollary}
\newtheorem{proposition}[lemma]{Proposition}
\newtheorem{conjecture}[lemma]{Conjecture}
\newcounter{rotcount}
\newtheorem{rot}{}[rotcount]
\newenvironment{proofof}{\noindent}{\hfill$\Box$\medskip}
\newenvironment{rotproof}{\noindent}{\hfill$\lozenge$\smallskip}
\newcommand{\ncont}{\nsubseteq}\newcommand{\cont}{\subseteq}
\renewcommand{\u}{\cup}\renewcommand{\i}{\cap}
\newcommand{\s}{^*}\newcommand{\del}{\backslash}
\newcommand{\emp}{\emptyset}\newcommand{\cl}{{\rm cl}}
\newcommand{\si}{{\rm si}}\newcommand{\co}{{\rm co}}
\newcommand{\lc}{\left\lceil}\newcommand{\rc}{\right\rceil}
\newcommand{\defin}{\textbf}
\newcommand{\C}{\mathcal{C}}
\newcommand{\F}{\mathcal{F}}
\title{A splitter theorem on 3-connected matroids}
\author{Jo\~ao Paulo Costalonga\\}\thanks{The author was partially supported by CNPq, grant 478053/2013-4}
\address{{\upshape joaocostalonga@gmail.com}\\
  Universidade Federal do Esp\'irito Santo\\
  Av. Fernando Ferrari, 514; Campus de Goiabeiras\\
  29075-910 - Vit\'oria - ES - Brazil
 }
\begin{document}
\begin{abstract}
We establish the following splitter theorem for graphs and its generalization for matroids: Let $G$ and $H$ be $3$-connected simple graphs such that $G$ has an $H$-minor and $k:=|V(G)|-|V(H)|\ge 2$. Let $n:=\left\lceil k/2\right\rceil+1$. Then there are pairwise disjoint sets $X_1,\dots,X_n\subseteq E(G)$ such that each $G/X_i$ is a $3$-connected graph with an $H$-minor, each $X_i$ is a singleton set or the edge set of a triangle of $G$ with $3$ degree-$3$ vertices and $X_1\cup\cdots\cup X_n$ contains no edge sets of circuits of $G$ other than the $X_i$'s. This result extends previous ones of Whittle (for $k=1,2$) and Costalonga (for $k=3$).
\end{abstract}
\maketitle
Key words: graph, matroid, minor, connectivity, vertical connectivity, splitter theorem.\\

\section{Introduction}

For a $3$-connected matroid $M$ with an $N$-minor, we say that a set $X\cont E(M)$ is \defin{$N$-contractible} or \defin{vertically $N$-contractible} in $M$ if $M/X$ or $\si(M/X)$, respectively, is a $3$-connected matroid with an $N$-minor. We also define $x\in E(M)$ as \defin{$N$-contractible} or \defin{vertically $N$-contractible} in $M$ if $M/x$ or $\si(M/x)$, respectively, is a $3$-connected matroid with an $N$-minor. We say that $x\in E(M)$ is \defin{$N$-deletable} if $M\del x$ is $3$-connected with an $N$-minor. When $N$ is the empty matroid, we simply say that the element or set is \defin{(vertically) contractible} or \defin{deletable}, according to the suitable case.

Deletable and contractible elements are vastly used in matroid and graph theory as inductive tools. In one hand, there are results regarding the number of (vertically) contractible elements in matroids and graphs and their structure and distribution \cite{Ando, Egawa, Wu, Oxley-Wu, Oxley-Wu2}. In other hand, there are the so-called splitter theorems that asserts that, for $3$-connected matroids $M>N$ satisfying certain hypothesis, there is an (vertically) $N$-contractible or $N$-deletable element in $M$. For example, Seymour's Splitter Theorem~\cite{Seymour1980} and many others \cite{Bixby87, Kingan, Brettel, Zwan}.

We establish a theorem towards the unification of both families of results. The fundamental question we seek answer may be stated as follows: ``given $3$-connected matroids $M>N$ how many $N$-vertically contractible elements can guarantee to exist? What can we say about their distribution?''. In particular, we generalize the following theorem:

\begin{theorem}\label{whittle-teo}(Whittle~\cite{Whittle}($k=1,2$) and Costalonga~\cite{Costalonga2}($k=3$))
Let $k\in\{1,2,3\}$ and let $M$ be a $3$-connected matroid with a $3$-connected simple minor $N$ such that $r(M)-r(N)\ge k$. Then, $M$ has a $k$-independent set of vertically $N$-contractible elements.
\end{theorem}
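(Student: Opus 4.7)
The plan is to argue by induction on $k$. The base case $k=1$ is exactly Whittle's splitter theorem: if $M$ is $3$-connected with a $3$-connected minor $N$ and $r(M)>r(N)$, then $M$ has a vertically $N$-contractible element.

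For the inductive step, the most natural first attempt is to apply the base case to obtain a vertically $N$-contractible element $e_1\in E(M)$ and work inside $M':=\si(M/e_1)$, which is $3$-connected with an $N$-minor and satisfies $r(M')-r(N)\ge k-1$. Induction in $M'$ would produce a $(k-1)$-independent set $\{f_2,\dots,f_k\}\cont E(M')$ of vertically $N$-contractible elements of $M'$. Lifting each $f_i$ to an element $e_i$ of $M$ in the corresponding parallel class of $M/e_1$ makes $\{e_1,e_2,\dots,e_k\}$ independent in $M$, since contracting $e_1$ sends the $e_i$'s onto the independent family $\{f_2,\dots,f_k\}$. The difficulty with this approach is that the induction only ensures that $\si(M'/f_i)=\si(M/\{e_1,e_i\})$ is $3$-connected with an $N$-minor, which says that $\{e_1,e_i\}$ is \emph{jointly} vertically $N$-contractible but not that $e_i$ is individually so in $M$. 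Unwinding the contraction of $e_1$ may destroy $3$-connectivity of $\si(M/e_i)$ by reintroducing a small vertical separation.

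To overcome this, I would switch to a global viewpoint: let $\C\cont E(M)$ be the set of vertically $N$-contractible elements of $M$ and try to bound $r(\cl(\C))$ from below by $k$. The route is a structural argument by contrapositive: assuming every vertically $N$-contractible element lies in a flat $F$ of rank at most $k-1$, each element of $E(M)\del F$ fails to be vertically $N$-contractible, and this failure forces a local obstruction (a triad meeting $E(N)$, a bad triangle through the element, or a $2$-separation of $\si(M/e)$), in the spirit of the blocker analysis underlying Seymour's splitter theorem. Piecing these obstructions together along a $k$-independent set of candidates should contradict the rank gap $r(M)-r(N)\ge k$, because each obstruction either pins an element of $M$ close to $N$ or consumes a unit of rank.

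The main obstacle will be the case $k=3$, which is Costalonga's contribution. After two independent vertically $N$-contractible elements $e_1,e_2$ have been located, the next candidate $e_3$ may only be available at the cost of completing a triangle with $e_1$ and $e_2$; ruling out such triangles, or exchanging one of the $e_i$ across the triangle to preserve vertical $N$-contractibility while breaking the circuit, requires a delicate case analysis of short circuits and fans through the chosen elements. I do not expect a genuinely new idea to be needed here beyond Whittle's and Costalonga's original arguments, since Theorem~\ref{whittle-teo} is essentially their combined statement; the present paper's novelty lies in pushing the technique past $k=3$.
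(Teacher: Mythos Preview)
Your plan correctly isolates the real obstacle: a vertically $N$-contractible element $f$ of $\si(M/e_1)$ need not be vertically $N$-contractible in $M$. But the proposed resolution---bounding $r(\cl(\C))$ from below by assembling local ``obstructions'' into a rank contradiction---is not a proof as written; it is a hope. You never specify what the obstructions are, how they interact across different candidate elements, or why $k\le 3$ is enough to force a contradiction. In particular, the phrase ``each obstruction either pins an element of $M$ close to $N$ or consumes a unit of rank'' is not supported by any argument, and counting rank this way does not obviously work.

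The paper's proof of this theorem takes a route you almost reach but then abandon. Working with a minimal counterexample, it first strips away side cases: if $M$ has an $N$-deletable element, minimality of $|E(M)|$ finishes (Corollary~\ref{cunningham-cor}); if $M$ has an $N$-biweb or $N$-carambole, the biweb/hull already supplies a $3$-independent set of vertically $N$-contractible elements (Corollaries~\ref{w38-cor}, \ref{contractible-b3}, Proposition~\ref{independent-seeds}). Once those are excluded, the lifting problem is solved not by showing $f$ itself is vertically $N$-contractible in $M$, but by showing it is \emph{$(\{e_1\},N)$-replaceable}: Lemma~\ref{w36} says the failure of $f$ to be vertically $N$-contractible produces a rank-$3$ cocircuit $C^*$ with $f\in\cl_M(C^*)-C^*$ and $e_1\in C^*$, and then Corollaries~\ref{4-circuit} and~\ref{connected paradise} (or the biweb case, already excluded) force enough of $C^*$ to be vertically $N$-contractible that $f$ lies in the span of $\{e_1\}$ together with contractible elements. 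A clean exchange lemma (Lemma~\ref{lifter}) then trades each replaceable $f$ for a genuinely contractible element while preserving independence with $e_1$. This is exactly the ``exchanging one of the $e_i$ across the triangle'' idea you mention for $k=3$, but made precise and uniform; the missing ingredient in your sketch is the vertbarrier structure (Lemma~\ref{w36}) that makes the exchange possible.
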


%Such kind of theorem is useful when only one of the deletion or contraction is a friendly operation for one's purposes. Whittle~\cite{Whittle} established Theorem \ref{whittle-teo} for $k\le 2$ to study stabilizers of classes of representable matroids. As another example of application, Theorem \ref{whittle-teo} is used in a series of results about non-separating cocircuits and graphicness in binary matroids~\cite{Costalonga, Lai, Lemos2009}.

%\section{rest of intro}

The requrement of $N$ to be simple in the theorem above is necessary, for instance, $M\in\{M(K_4),U_{2,n}\}$ has an $U_{1,3}$-minor but no edge $e$ such that $\si(M/e)$ has an $U_{1,3}$-minor. Theorem \ref{whittle-teo} is not valid for larger values of $k$ with a similar statement. Indeed, $M\s(K_{3,n}''')$ has only three vertically contractible elements \cite[Theorem 2.10]{Wu}. However, we prove an extension of Theorem \ref{whittle-teo} considering another vertically $N$-contractible structure than a single element.

For a matroid $M$ and $n\ge 3$, a sequence of elements $K:=x_1,\dots,x_n,y_1,\dots,y_n$ is said to be an \defin{$N$-carambole} of $M$ if $L:=\{y_1,\dots,y_n\}$ is a vertically $N$-contractible line of $M$ with $n$ distinct elements and, for each $i\in[n]$, $(L-y_i)\u x_i$ is a cocircuit of $M$. In this case, we say that $L$ is the \defin{filament} and $X:=\{x_1\dots,x_n\}$ is the \defin{hull} of $K$. Note that in the graphic case a filament corresponds to a triangle whose vertices have degree $3$. We say that a family $\{X_1,\dots,X_n\}$ of subsets of $E(M)$ is a \defin{free} family of $M$ if $M|(X_1\u\cdots\u X_n)=M|X_1\oplus\cdots\oplus M|X_n$. Note that, if each $X_i$ is a singleton set, then such family is free if and only if $X_1\u\cdots\u X_n$ is independent in $M$. Our main theorem is the following.

\begin{theorem}\label{main-theorem}
If $M$ is a $3$-connected matroid with a $3$-connected simple minor $N$ such that $k:=r(M)-r(N)\ge 2$, then $M$ has a free family with cardinality $\lceil k/2\rceil+1$ whose members are vertically $N$-contractible singleton sets or $N$-filaments of $M$.\end{theorem}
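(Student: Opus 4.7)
The plan is induction on $k := r(M) - r(N)$. For $k \in \{2,3\}$, Theorem~\ref{whittle-teo} provides a $k$-independent set of vertically $N$-contractible singletons, which is itself a free family of cardinality $k = \lceil k/2 \rceil + 1$, settling the base cases.

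For $k \ge 4$, I would seek a single ``first member'' $X_1 \subseteq E(M)$ of the family that is either a vertically $N$-contractible singleton or an $N$-filament, chosen so that $M_1 := \si(M/X_1)$ is $3$-connected with an $N$-minor. The inductive hypothesis applied to $(M_1, N)$ then yields a free family in $M_1$, which is lifted back to $M$ and combined with $X_1$. The arithmetic works out in all cases: when $r(X_1) = 2$ we get $\lceil (k-2)/2 \rceil + 1 = \lceil k/2 \rceil$ further members, and when $r(X_1) = 1$ we get $\lceil (k-1)/2 \rceil + 1$ further members, in both situations yielding at least $\lceil k/2 \rceil + 1$ after adjoining $X_1$ (truncating freely as needed, since any subfamily of a free family is free).

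The principal obstacle is the production of $X_1$ itself. By \cite[Theorem 2.10]{Wu} and related examples, a $3$-connected matroid need not have many vertically $N$-contractible singletons, so in the difficult cases we must instead exhibit an $N$-filament. I would mirror the case analysis used by Whittle and Costalonga in the proofs of Theorem~\ref{whittle-teo}: first attempt to locate a vertically $N$-contractible singleton whose contraction preserves $3$-connectivity, and, when this fails, exploit the structural constraints that the absence of such singletons imposes on the triads and lines of $M$ around near-contractible elements to assemble an $N$-carambole directly, invoking Corollary~\ref{triangle-carambole} to certify that its filament is $N$-contractible. The lifting step is the technical complement: singletons of $M_1$ lift to parallel-class representatives in $M/X_1$, and filaments of $M_1$ lift to $N$-filaments of $M$ via Proposition~\ref{prov} and Corollary~\ref{deltawye}, with the free-family property reducing to the rank identity $r_M(X_1 \cup Y_2 \cup \cdots \cup Y_n) = r(X_1) + \sum_{i=2}^n r(Y_i)$, which follows from rank additivity under contraction together with the reconstruction of $M$ from $M/X_1$ afforded by Proposition~\ref{reconstruction}. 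I expect the search for $X_1$ in the hard case — where no good singleton exists and the full carambole/filament machinery must be deployed — to be by far the most delicate part of the argument.
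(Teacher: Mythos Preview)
Your overall inductive shape matches the paper's, but you have misidentified where the difficulty lies, and the step you treat as routine is in fact the crux.

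Producing a first member $X_1$ is \emph{not} the obstacle: Theorem~\ref{whittle-teo} for $k=1$ already guarantees a vertically $N$-contractible singleton in any $3$-connected $M$ with $r(M)>r(N)$. The genuine problem is the lift. A vertically $N$-contractible element of $M_1=\si(M/X_1)$ need not be vertically $N$-contractible in $M$, and an $N$-filament of $M_1$ need not be an $N$-filament of $M$; the paper says this explicitly at the start of Section~\ref{section-structures}. Your proposed tools for the lift do not do what you claim: Proposition~\ref{prov}, Corollary~\ref{deltawye}, and Proposition~\ref{reconstruction} describe how $M$ relates to $M/L$ when $L$ is already known to be a filament of $M$; they give no mechanism for pulling back contractible singletons or filaments from $M/X_1$ to $M$. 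The rank identity you state for freeness is fine, but freeness is not the issue --- the issue is that the lifted sets may simply fail to be vertically $N$-contractible singletons or $N$-filaments of $M$.

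The paper handles this by developing the notion of $(X,N)$-replaceable elements and the exchange Lemma~\ref{lifter}, together with a three-way case split (via Seymour's Splitter Theorem) into: (i) $M$ has an $N$-deletable element, which one deletes first and then applies Corollary~\ref{cunningham-cor} and Lemma~\ref{deletion-carambole}; (ii) $M$ has an $N$-biweb but no $N$-deletable element, handled via Lemmas~\ref{contractible-triangle}, \ref{up-fil-biweb} and Corollary~\ref{remendo}; (iii) otherwise an $N$-contractible element, handled via Lemmas~\ref{no deletable} and~\ref{upcont}. Note especially that case~(i) requires \emph{deletion}, which your purely contract-and-lift scheme never contemplates; without it you cannot guarantee the hypotheses needed to lift filaments (Lemma~\ref{upcont} requires that $M$ have no $N$-deletable elements). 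Your plan, as written, would stall precisely at the lifting step you labelled ``the technical complement''.
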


For graphic matroids, we have:

\begin{corollary}\label{main-graphic}
Suppose that $G$ and $H$ are $3$-connected simple graphs, that $G$ has an $H$-minor and that $k:=|V(G)|-|V(H)|\ge 2$. Then, there is a family $\F:=\{X_1,\dots,X_n\}$ of pairwise disjoint subsets of $E(G)$, such that $n\ge\lceil k/2\rceil+1$, each edge set of a circuit of $G[X_1\u\cdots\u X_n]$ is a member of $\F$ and, for each $i\in[n]$:
\begin{enumerate}
\item [(a)] $X_i$ is a singleton set such that $G/X_i$ is $3$-connected with an $H$-minor, or
\item [(b)] $X_i$ is the edge set of a triangle of $G$ with three degree-$3$ vertices and $G/X_i$ is $3$-connected and simple with an $H$-minor.
\end{enumerate}
\end{corollary}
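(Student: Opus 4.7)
The plan is to derive the corollary as the cycle-matroid specialization of Theorem~\ref{main-theorem}. I would set $M:=M(G)$ and $N:=M(H)$; since $G$ and $H$ are simple $3$-connected graphs, $M$ and $N$ are $3$-connected matroids with $N$ a minor of $M$ and $r(M)-r(N)=|V(G)|-|V(H)|=k\ge 2$. Applying Theorem~\ref{main-theorem} to the pair $(M,N)$ should yield pairwise disjoint sets $X_1,\dots,X_n\subseteq E(G)$ with $n\ge\lceil k/2\rceil+1$, each either a single vertically $N$-contractible element of $M$ or a triangle of $M$ whose three elements each belong to a triad of $M$ (these being the matroidal incarnations of the two structures listed in the corollary), and with no circuits contained in $X_1\cup\cdots\cup X_n$ other than the $X_i$'s.

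The remaining work is pure translation into graph-theoretic terms. For case (a), the matroid condition that $\si(M(G)/e)$ is $3$-connected with an $N$-minor must be upgraded to the graph assertion that $G/e$ itself is $3$-connected with an $H$-minor; the only possible obstacle is a parallel pair in $G/e$, which would correspond to a triangle of $G$ containing $e$, and this can be excluded either via the circuit-freeness of $X_1\cup\cdots\cup X_n$ or by reassigning $e$ into a triangle structure. For case (b), since $G$ is simple, a triangle of $M(G)$ is a triangle $T=\{uv,vw,uw\}$ of $G$, and an edge $uv\in T$ lies in a triad of $M(G)$ exactly when the vertex of $G$ opposite $uv$ in $T$, namely $w$, has degree $3$ in $G$ (the only triads of a $3$-connected graphic matroid through a given edge are the bonds at its degree-$3$ endpoints, when such exist). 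So $T$ is a triangle of $G$ with three degree-$3$ vertices.

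The step I expect to be the main obstacle is verifying that contracting such a $T$ actually returns a \emph{simple} $3$-connected graph: one must rule out that the three outer edges incident to $u,v,w$ produce parallel pairs or $2$-separations in $G/T$. Both phenomena are prevented by the $3$-connectivity and simplicity of $G$ together with the circuit condition on $\F$ (a parallel pair in $G/T$ would extend $T$ to a $4$-circuit inside $X_1\cup\cdots\cup X_n$, violating the ``no extra circuits'' clause), but the verification requires a careful small-case check. Once this is in place, the existence of an $H$-minor and the circuit-freeness condition on the union transfer verbatim from Theorem~\ref{main-theorem}, completing the argument.
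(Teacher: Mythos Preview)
Your overall approach---specialize Theorem~\ref{main-theorem} to $M=M(G)$, $N=M(H)$---is exactly how the paper intends the corollary to be derived (no separate proof is given). Since $G$ is simple, every line of $M(G)$ has at most three points, so each $N$-filament produced by Theorem~\ref{main-theorem} is the triangle of an $N$-triweb, and the triad structure of a triweb forces the three vertices of that triangle to have degree~$3$. The free-family condition translates directly to the circuit clause in the statement.

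Two corrections on the details. First, your worry in case~(a) is unnecessary: item~(b) says ``$3$-connected and simple'' while item~(a) says only ``$3$-connected''. The intended reading is that $G/X_i$ in~(a) may have parallel edges, so vertical $N$-contractibility of $e$ in $M(G)$ already gives that $G/e$ is $3$-connected in the graph sense, and nothing needs to be upgraded.

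Second, your argument for the simplicity of $G/T$ in case~(b) does not work. A parallel pair in $G/T$ arises from a $4$-circuit $C$ of $G$ with $|C\cap T|=2$, but the two edges of $C\setminus T$ need not lie in $X_1\cup\cdots\cup X_n$, so the free-family clause says nothing about $C$. The correct tool is Corollary~\ref{triangle-carambole}: for any filament $L$ of a $3$-connected matroid $M$ with $r(M)\ge4$, the contraction $M/L$ is genuinely $3$-connected (not merely vertically $3$-connected). Hence $M(G/T)=M(G)/T$ is a $3$-connected matroid, which forces $G/T$ to be simple and $3$-connected.
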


For a degree-$3$ vertex $v$ in a $3$-connected graph $G'$, up to labels, there is an unique way to build a $3$-connected graph $G$ such that $G/T=G'$ and $T$ is a triangle of $G$ whose adjacent edges in $G$ are the ones adjacent to $v$ in $G$. Similarly, for a non-trivial cosegment $X$ in a $3$-connected matroid $M'$, there is an unique matroid $M$ with a carambole with $X$ as hull and a filament $L$ such that $M/L=M'$ (up to the labels of the elements of $L$). In Section \ref{fancy} this claim is be proved and the relations between such $M$ and $M'$ are described.

The size of the family in Theorem \ref{main-theorem} is sharp. Even if we drop the requirement that the family obtained in the Theorem \ref{main-theorem} is free, the size $\lceil k/2\rceil+1$ cannot be improved. The sharp family of examples in Section \ref{sec-sharpness} also holds for this weaker version of the Theorem. 

In our studies, a structure weaker than a carambole raises naturally in the critical cases as an obstruction for some elements to be vertically $N$-contractible. An \defin{$N$-biweb} is a sequence of elements $x_1,x_2,y_1,y_2,y_3$ such that $\{y_1,y_2,y_3\}$ is a vertically $N$-contractible triangle and, for $i=1,2$, $\{x_i,y_{3-i},y_3\}$ is a triad of $M$. The following corollaries strengthen Theorem \ref{main-theorem} for $k=4,5$.

\begin{corollary}\label{k4}
If, in Theorem \ref{main-theorem}, $k=4$, and $M$ has no $4$-independent set of vertically $N$-contractible elements, then $M$ has a $3$-independent set of vertically contractible elements in an $N$-biweb.\end{corollary}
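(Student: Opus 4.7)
The plan is to deduce Corollary \ref{k4} from Theorem \ref{main-theorem} applied with $k=4$, which produces a free family $\mathcal{F}=\{X_1,X_2,X_3\}$ of cardinality $\lceil 4/2\rceil+1=3$ whose members are each either a vertically $N$-contractible singleton or an $N$-filament of $M$. I would split on whether any member of $\mathcal{F}$ is an $N$-filament.

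Suppose some $X_i$ is an $N$-filament; then by the carambole definition it contains a vertically $N$-contractible triangle $\{y_1,y_2,y_3\}$, and the hull of an associated carambole supplies elements $x_1, x_2$ making $\{x_1, y_2, y_3\}$ and $\{x_2, y_1, y_3\}$ triads of $M$, so $x_1,x_2,y_1,y_2,y_3$ is an $N$-biweb. When the filament has size greater than three, the long-line cocircuits of the carambole are not themselves triads, and an extra step using the structural results of Section \ref{fancy} --- such as Proposition \ref{prov} and Corollary \ref{deltawye} --- is needed to reduce to the triangle case. Inside the biweb, the set $\{x_1, x_2, y_1\}$ is independent: were it a triangle, it would meet the triad $\{x_1, y_2, y_3\}$ in the single element $x_1$, contradicting the orthogonality of circuits and cocircuits. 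The vertical contractibility of $x_1$, $x_2$ and $y_1$ in $M$, without the $N$-minor condition, then follows from the structural results on filaments in Section \ref{fancy}.

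The remaining possibility is that all members of $\mathcal{F}$ are singletons, so that $\mathcal{F}$ only delivers three independent vertically $N$-contractible elements and the corollary's hypothesis is not immediately contradicted. Here the biweb must be produced by revisiting the proof of Theorem \ref{main-theorem}: under the added assumption that no fourth independent vertically $N$-contractible element exists, the inductive step that would enlarge the free family past size three must fail at some point, and the local obstruction to that failure is exactly an $N$-biweb. This is the main obstacle of the proof, since it forces one to open Theorem \ref{main-theorem} rather than use it as a black box; once the biweb is in hand, a $3$-independent set of vertically contractible elements is extracted as in the filament case.
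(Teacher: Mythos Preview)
Your plan diverges from the paper's and has a real gap.

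The paper does not use Theorem~\ref{main-theorem} as a black box. It reruns that theorem's inductive argument: take a minimal counterexample and split into cases (i)--(iii) via Seymour's Splitter Theorem. Case~(ii) hands you an $N$-biweb $x_1,x_2,y_1,y_2,y_3$ directly, and then Corollary~\ref{w38-cor} (making $x_1,x_2$ vertically contractible) together with Corollary~\ref{contractible-b3} (making $y_3$ vertically contractible) yield the required $3$-independent set $\{x_1,x_2,y_3\}$. Case~(iii) is handled by contracting an $N$-contractible element and invoking Theorem~\ref{whittle-teo} for $k=3$ rather than minimality; case~(i) is as in the main theorem.

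Your filament branch has two problems. First, for $|L|>3$ you appeal vaguely to Section~\ref{fancy}; in fact Proposition~\ref{independent-seeds} says outright that the hull is an $|L|$-independent set of $N$-contractible elements, so a filament with $|L|\ge 4$ already contradicts the hypothesis that no $4$-independent set exists --- there is nothing to reduce. Second, and more seriously, for $|L|=3$ you propose $\{x_1,x_2,y_1\}$ as the $3$-independent set but never justify that $y_1$ is vertically contractible, and nothing in Section~\ref{fancy} supplies this. The lemma that actually delivers a vertically contractible $y$-element is Corollary~\ref{contractible-b3}; it singles out $y_3$, not $y_1$, and it requires the biweb to be \emph{strict}. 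A biweb carved out of a genuine triweb is never strict, so your route of extracting the biweb from a filament does not connect to the tool you need.

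Finally, in the ``all singletons'' case you concede that one must reopen the proof of Theorem~\ref{main-theorem} and locate the biweb as the obstruction in the inductive step. That is exactly the paper's entire argument, so the black-box application you start with buys nothing: the clean proof runs the case analysis from the outset and finds the biweb in case~(ii), not as a by-product of a filament in the theorem's output.
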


\begin{corollary}\label{k5}
If, in Theorem \ref{main-theorem}, $k=5$, then $M$ has a $4$-independent set of vertically $N$-contractible elements or an $N$-filament with $3$ elements.
\end{corollary}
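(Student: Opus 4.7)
The plan is to apply Theorem \ref{main-theorem} with $k = r(M) - r(N) = 5$, which delivers a free family $\F = \{X_1, X_2, X_3, X_4\}$ of cardinality $\lceil 5/2 \rceil + 1 = 4$ whose members are each either a vertically $N$-contractible singleton or an $N$-filament of $M$. The argument then splits into cases according to the types of the members of $\F$.

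If every $X_i$ is a singleton, then the freeness of $\F$ is the same as the independence of $X_1 \cup \cdots \cup X_4$ in $M$, and since each $X_i$ is by hypothesis vertically $N$-contractible, this immediately produces a $4$-independent set of vertically $N$-contractible elements, matching the first alternative of the conclusion.

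Otherwise, some $X_i$ is an $N$-filament, which I call $L$. By the definition of $N$-filament there is an $N$-carambole of $M$ whose filament is $L$. When $|L| = 3$, such a carambole is by definition an $N$-triweb, and the second alternative of the conclusion holds.

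The main obstacle is the subcase $|L| \geq 4$. Every $3$-subset $T \subseteq L$ is a triangle of $M$; however, since $L$ is a rank-$2$ flat spanned by $T$, the elements of $L \setminus T$ become loops in $M/T$. Consequently $T$ is only vertically $N$-contractible (using $\si(M/T) = \si(M/L)$) and not $N$-contractible, so $T$ cannot serve as the triangle of an $N$-triweb. To overcome this obstruction I would pursue one of two routes: (a) revisit the inductive proof of Theorem \ref{main-theorem} specialized to $k=5$ and refine it so that every filament appearing in the produced family has exactly three elements; or (b) exploit the cocircuit structure of the carambole (the cocircuits $(L - y_i) \cup x_i$) together with the singleton members of $\F$ to extract four independent vertically $N$-contractible singletons, falling back into the first alternative. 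I expect route (a) to be the cleaner one, since it amounts to a local strengthening of the construction already used in the proof of Theorem \ref{main-theorem}, whereas (b) requires a more delicate argument showing individual elements of $L$ or of the hull remain vertically $N$-contractible in $M$ after accounting for the line structure.
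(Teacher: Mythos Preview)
Your setup matches the paper's proof exactly: apply Theorem~\ref{main-theorem} with $k=5$ to obtain a free family $\{X_1, X_2, X_3, X_4\}$, then split on whether all members are singletons (giving a $4$-independent set directly) or some $X_i$ is an $N$-filament $L$, with the subcase $|L|=3$ yielding an $N$-triweb by definition.

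The genuine gap is the subcase $|L| \geq 4$, which you flag as an obstacle but do not resolve. The paper handles it in one line via Proposition~\ref{independent-seeds}: if $X$ is the hull of the $N$-carambole with filament $L$, then $X$ is an independent set of $M$ whose elements are all $N$-contractible (hence vertically $N$-contractible). By Proposition~\ref{1st-prop} one has $|X| = |L| \geq 4$, so $X$ already contains a $4$-independent set of vertically $N$-contractible elements, and the first alternative of the conclusion holds. This is precisely your route~(b), and contrary to your assessment it is not delicate at all---the needed facts about the hull are packaged in Propositions~\ref{1st-prop} and~\ref{independent-seeds}, which you overlooked. Your preferred route~(a), reopening the induction in Theorem~\ref{main-theorem} to force all filaments to be triangles, would be substantially more work and is unnecessary here.
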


Let $M$ be a $3$-connected graphic matroid other that a wheel. If $F^+$ is a maximal fan of $M$ with respect to having its extremes in triads of $M$, then we say that the set $F$ of the non-extreme elements of $F$ is an inner fan of $M$. Let $N$ be a $3$-connected minor of $M$. Costalonga~\cite{Costalonga-Contractible} established that $M$ has a free family whose members are vertically $N$-contractible inner fans or singleton sets and whose sum of the rank of the members is $r(M)-r(N)$. A problem to be considered in further investigations is to establish a generalization of this result to non graphic matroids. Specially, in such a way that the next conjecture follows as a corollary:

\begin{conjecture}
If $M$ is a triangle-free $3$-connected matroid with a simple $3$-connected minor $N$, then $M$ has an independent set $I$ of $N$-contractible elements such that $|I|=r(M)-r(N)$.
\end{conjecture}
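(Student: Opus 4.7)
Since $M$ is triangle-free, every rank-$2$ flat of $M$ contains at most two elements; hence $M$ has no $N$-filament, no $N$-biweb and no $N$-triweb, each of which is defined to contain a line with three distinct elements. Moreover, for any $x\in E(M)$ the contraction $M/x$ is simple (a parallel pair would arise from a triangle of $M$ through $x$), so vertical $N$-contractibility and $N$-contractibility coincide on singletons, and a free family of singleton sets is the same thing as an independent set of $N$-contractible elements. Applying Theorem~\ref{main-theorem} therefore immediately yields an independent set of $\lceil k/2\rceil+1$ $N$-contractible elements of $M$, where $k:=r(M)-r(N)$; and Corollary~\ref{k4} settles the full conjecture in the case $k=4$, since the biweb alternative in its conclusion is vacuous under triangle-freeness.

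To close the gap between $\lceil k/2\rceil+1$ and $k$, the natural attempt is induction on $k$, with Theorem~\ref{whittle-teo} as base. That induction, however, is not straightforward: contracting an $N$-contractible element $x$ gives $M':=M/x$ with $r(M')-r(N)=k-1$, but the inductive hypothesis applied to $M'$ would produce elements that are $N$-contractible in $M'$ rather than in $M$ (which is the strictly stronger property demanded by the conjecture), and, on top of that, $M'$ need not be triangle-free, because every $4$-element circuit $\{a,b,c,x\}$ of $M$ becomes a triangle of $M'$.

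My plan is therefore a direct argument that pushes the structural analysis behind Theorem~\ref{main-theorem} under the triangle-free hypothesis, guided by the graphic inner-fan result of Costalonga~\cite{Costalonga-Contractible}. There, in the graphic setting, the deficit $r(M)-r(N)$ minus the number of vertically $N$-contractible singletons is exactly accounted for by inner fans; in a triangle-free matroid inner fans cannot exist, so heuristically the deficit must collapse to zero and $k$ singletons should appear. Concretely, I would analyse, for each element $x\in E(M)$ that fails to be $N$-contractible, the $3$-separation of $M$ produced by that failure, and show that under triangle-freeness the only structural obstructions are of filament, triweb or biweb type — all of which are forbidden in $M$ — so that each such $x$ can be traded for a new independent $N$-contractible element. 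Enlarging the free family supplied by Theorem~\ref{main-theorem} via this exchange, until it has size $k$, is where I expect the genuine difficulty of the conjecture to lie: performing such an exchange in the general (non-graphic) setting requires the sort of fine local-connectivity control that is developed only in limited form in Section~\ref{fancy}, and a substantial extension of that machinery seems unavoidable.
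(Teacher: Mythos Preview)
The statement you are attempting to prove is not a theorem in the paper: it is explicitly listed as a \emph{Conjecture}, introduced as ``a problem to be considered in further investigations'', and the paper offers no proof of it. So there is no ``paper's own proof'' to compare against; the authors leave it open.

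Your preliminary observations are all correct. Triangle-freeness does kill filaments, biwebs and triwebs (each requires a line with at least three points), and it does force $M/x$ to be simple whenever $x\in E(M)$, so vertically $N$-contractible and $N$-contractible coincide on elements. Hence Theorem~\ref{main-theorem} applied to a triangle-free $M$ genuinely produces an independent set of $\lceil k/2\rceil+1$ $N$-contractible elements, and Corollary~\ref{k4} does give the full bound $k$ when $k=4$. None of this is in dispute.

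But you yourself recognise that this leaves a gap of roughly $k/2$ for general $k$, and your proposal for closing it is a plan, not an argument. The two obstacles you name are exactly the ones that make the conjecture nontrivial: (1) $M/x$ need not be triangle-free, so the hypothesis is not stable under the obvious induction; (2) even if one finds a large independent set of $N$-contractible elements in $M/x$, lifting them back to $M$ is precisely the step where the paper's machinery (Lemmas~\ref{no deletable}, \ref{upcont}, \ref{lifter}) only guarantees replaceability modulo the structures that triangle-freeness forbids in $M$ but not in $M/x$. Your final paragraph acknowledges that ``a substantial extension of that machinery seems unavoidable'', which is an accurate assessment and is exactly why the statement remains a conjecture. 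In short: your reductions are sound, your diagnosis of the difficulty is right, but what you have is not a proof, and neither does the paper claim one.
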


We follow the terminology of Oxley~\cite{Oxley}. The symbol ``$\lozenge$'' is used to indicate the end of a nested proof. We also denote $[n]:=\{1,\dots,n\}$ and use the same letter to refer to a sequence of elements and the set of its elements. Some notations will remain fixed since the beginning of Section \ref{fancy}.

\section{Sharpness}\label{sec-sharpness}

A sharp case for Theorem \ref{main-theorem} is constructed next. For $n\ge 4$, let $K$ be a copy of $K_{3,n}'''$ and $U:=\{v\in V(K): d_K(v)>3\}$. For $m\ge 4$, let $H$ be a copy of the bipartite graph $K_{m,m}$ with stable classes of vertices $A$ and $B$, $U\cont B$ and $V(H)\i V(K)=U$. Let $G$ be the union of $H$ and $K$ and define $M:=M\s(G)$ and $N=M\s(H)$. Denote $V(K)-U=\{v_1,\dots,v_n\}$. Let $X_i$ be the set of edges incident to $v_i$ in $G$. Note that each $X_i$ is the filament of carambole of $M$ whose other edges are in $K[U]$.

We will prove that, if $G\del e$ has an $H$-minor, then $e\in E(K)$. Indeed, consider $I,J\s\cont E(G)$ such that $H\cong G/I\del J\s$, $e\in J\s$, $I$ is independent and $J\s$ is coindependent in $M(G)$. Note that $G\del J\s$ has no isolated vertices. Since $H$ has no degree-$3$ vertices and $G\del J\s$ has no isolated vertices, then, each $v_i$ is incident to an element of $I$. But $\{v_1,\dots,v_n\}=V(K)-V(H)$. So, $|I|=n$. We may choose, in a natural way, $V(G/I)=V(H)$. If $e\in E(H)$, the vertex of $A$ incident to  $e$ has degree $m-1$ in $G/I\del e$. But $H$ is $m$-regular and $V(H)=V(G/I)$. A contradiction. This implies that $e\notin E(H)$ and, therefore $e\in E(K)$.

So, each family satisfying Theorem \ref{main-theorem} for $M$ and $N$ has all members contained in $E(K)$ and, therefore, $\F:=\{X_1,\dots,X_n\}\u \{\{e\}:e\in E(G[U])\}$ is a maximum sized such family. Note that $k:=r(M)-r(N)=2n+3$. Hence, $|\F|=n+3=\lc\frac{2n+6}{2}\rc=\lc \frac k2\rc+1$ and the desired sharpness holds.

\section{Preliminaries}\label{preliminaries}

In this section we establish some preliminary Lemmas. 

\begin{lemma}\label{rank3}
If $H$ is a connected rank-$3$ simple matroid and $a,b\in E(H)$, then there is a $4$-circuit of $H$ containing $a$ and $b$ or $H$ is the parallel connection of two lines with base point $a$ or $b$.
\end{lemma}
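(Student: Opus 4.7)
My plan rests on the observation that in a rank-$3$ simple matroid a $4$-element set is a circuit exactly when no three of its elements are collinear (lie on a common rank-$2$ flat). So the lemma reduces to: if no four-element set containing $\{a,b\}$ is in general position, then $H$ is a parallel connection of two lines at $a$ or at $b$.

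The case $a=b$ is handled by counting lines through $a$. These lines partition $E(H)\setminus\{a\}$, so if exactly two lines $\ell_1,\ell_2$ pass through $a$ we have $E(H)=\ell_1\cup\ell_2$ and $H$ is the parallel connection of $\ell_1$ and $\ell_2$ at $a$. If there are at least three lines through $a$, I pick one representative from each of three such lines; the triples through $a$ are automatically non-collinear, and connectivity prevents all elements of $E(H)-a$ from lying on a single line missing $a$ (otherwise $a$ would be a coloop). A swap within a long line, when needed, produces a $4$-circuit through $a$.

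For $a\ne b$, let $\ell:=\cl(\{a,b\})$ and $E':=E(H)\setminus\ell$; connectivity gives $|E'|\ge 2$. For distinct $c,d\in E'$, $\{a,b,c,d\}$ is a $4$-circuit iff $d\notin\cl(\{a,c\})\cup\cl(\{b,c\})$, so the nonexistence of a $4$-circuit through $a$ and $b$ is equivalent to $E'\subseteq\cl(\{a,c\})\cup\cl(\{b,c\})$ for every $c\in E'$. Writing $A_c:=E'\cap\cl(\{a,c\})$ and $B_c:=E'\cap\cl(\{b,c\})$, one has $A_c\cup B_c=E'$ and $A_c\cap B_c=\{c\}$. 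The key step is to show that some $c$ satisfies $A_c=\{c\}$ or $B_c=\{c\}$: assuming not, I pick $c'\in A_c-c$ and $c''\in B_c-c$, so $\cl(\{a,c'\})=\cl(\{a,c\})$ and $\cl(\{b,c''\})=\cl(\{b,c\})$. Applying the decomposition at $c''$ yields $c'\in\cl(\{a,c''\})$, which forces $\cl(\{a,c''\})=\cl(\{a,c'\})=\cl(\{a,c\})$ and hence $c''\in\cl(\{a,c\})\cap E'=A_c$, contradicting $c''\in B_c\setminus\{c\}$. Once the claim holds, $A_c=\{c\}$ gives $E'\subseteq\cl(\{b,c\})$; then $\ell$ and $\cl(\{b,c\})$ meet only at $b$ and cover $E(H)$, so $H$ is the parallel connection of these two lines at $b$. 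The case $B_c=\{c\}$ produces, symmetrically, a parallel connection at $a$.

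The hard step is the line-intersection bookkeeping in the case $a\ne b$: ruling out $|A_c|,|B_c|\ge 2$ rests on the fact that two distinct lines of a rank-$3$ matroid meet in at most one element, used several times in succession. The case $a=b$ is routine once one has the standard observation that, under connectivity, all elements of $E(H)\setminus\{a\}$ cannot lie on a common line not through $a$.
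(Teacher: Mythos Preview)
Your argument is correct, and it proceeds along a different line from the paper's.

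For $a\ne b$, the paper argues by contradiction via circuit elimination: it first observes that a triangle $T$ through $a,b$ must exist, then picks $c,d\notin\cl(T)$, uses the failure of $\{a,b,c,d\}$ to be a circuit to force a triangle $S=\{b,c,d\}$, picks $e\notin\cl(T)\cup\cl(S)$ (using that $H$ is not the parallel connection), forces a further triangle $R=\{a,d,e\}$, and finally eliminates $d$ from $R\cup S$ to produce the forbidden $4$-circuit $\{a,b,c,e\}$. Your approach is instead a global partition argument: for each $c\in E'$ you split $E'$ into the pencils $A_c$ and $B_c$ through $a$ and $b$, and the neat step is showing that $|A_c|,|B_c|\ge 2$ cannot hold for all $c$ by comparing the decompositions at $c$ and at $c''\in B_c-c$. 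This avoids circuit elimination entirely and yields the parallel-connection structure directly rather than by contradiction; it is arguably cleaner, and it also handles uniformly the subcase $\ell=\{a,b\}$ (no triangle through $a,b$), which in the paper's proof is hidden inside the word ``Hence''.

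For $a=b$, the paper reduces to the case $a\ne b$ via a basis $\{a,x,y\}$; you instead count lines through $a$ directly. Your sketch here is a bit compressed: the phrase ``a swap within a long line, when needed'' covers the situation where some $\ell_i$ has a third point, but when all lines through $a$ are two-point you really need to pick a \emph{fourth} representative rather than swap, and then invoke your coloop observation to rule out all representatives lying on the common line $m=\cl(\{c_1,c_2\})$. This is routine to fill in, but worth making explicit.
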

\begin{proof}
First consider the case that $a\neq b$. Suppose that the result does not hold in such case. Hence, $H$ has a triangle $T$ containing $a$ and $b$. Since $H$ has rank $3$ and no coloops, there are distinct elements $c$ and $d$ in $E(H)-\cl_H(T)$. The dependent set $\{a,b,c,d\}$ is not a circuit, so we may assume that $S:=\{b,c,d\}$ is a triangle of $H$. There is an element $e\in E(M)-(\cl_H(T)\u\cl_H(S))$ because $H$ is not the parallel connection of $\cl_H(T)$ and $\cl_H(S)$. By assumption, the dependent set $\{a,b,d,e\}$ is not a circuit of $H$. So, it contains a triangle $R$. By construction, $e\notin \cl_H(S)$. Hence, $a\in R$. Also, $d,e\notin \cl_M(\{a,b\})$ and, therefore, $R=\{a,d,e\}$. By circuit elimination on $R$, $S$ and $d$, there is a circuit $C$ of $H$ contained in $(R\u S)-d$. Since $R-d$ and $S-d$ are in distinct lines of $H$, then $C=(R\u S)-d$, which is a $4$-circuit of $H$ containing $a$ and $b$, a contradiction. Thus, the result holds if $a\neq b$.

Now, for $a=b$, suppose that $H$ has no $4$-circuit containing $a$ and consider a basis $\{a,x,y\}$ of $M$. If $a$ is in no $4$-circuit of $M$, then applying the previous case to $a$ and $x$ and to $a$ and $y$, we conclude that $M$ is the parallel connection of two nontrivial lines with base point $a$ and the result holds in general.
\end{proof}

\begin{lemma}\label{vertsep}
Let $H$ be a vertically connected matroid and $Y\cont E(H)$. Suppose that $\{A,B\}$ is a vertical $2$-separation of $H$ with $A$ minimal with respect to containing $Y$. Then $(A-Y)\i \cl_H(B)=\emp$. Moreover, if $x\in (A-Y)\i \cl\s_H(B)$, then $r_M(A)=2$ and $x$ is a coloop of $H|A$.
\end{lemma}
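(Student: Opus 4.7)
The plan is to contradict either the minimality of $A$ or the vertical connectivity of $H$ by moving $x$ across the partition: replace $\{A,B\}$ with $\{A-x,B\cup x\}$ and track how the separation value $\lambda(A):=r(A)+r(B)-r(H)$ changes. The change in $r(B\cup x)$ is governed by whether $x\in\cl_H(B)$, and the change in $r(A-x)$ is governed by whether $x$ is a coloop of $H|A$, i.e., by whether $x\in\cl\s_H(B)$.

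For the first assertion, suppose $x\in(A-Y)\i\cl_H(B)$. Then $r(B\cup x)=r(B)$ and so $\lambda(A-x)\le\lambda(A)\le 1$. Since $x\notin Y$, we have $Y\cont A-x\subsetneq A$. If $r(A-x)\ge 2$, then $\{A-x,B\cup x\}$ is a vertical $2$-separation that is strictly smaller on the $A$-side, contradicting the minimality of $A$. Otherwise, using $r(A)\ge 2$ and $r(A)-r(A-x)\le 1$, we must have $r(A)=2$ and $r(A-x)=1$; then $\lambda(A-x)=0$ with both $r(A-x)\ge 1$ and $r(B\cup x)\ge 2$, yielding a vertical $1$-separation and contradicting the vertical connectivity of $H$.

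For the moreover part, suppose $x\in(A-Y)\i\cl\s_H(B)$. By the first part, $x\notin\cl_H(B)$, so $r(B\cup x)=r(B)+1$. The coclosure condition, together with $x\in A$, translates to $x$ being a coloop of $H|A$, i.e., $r(A-x)=r(A)-1$. Substituting both identities, $\lambda(A-x)=\lambda(A)\le 1$. Since $r(B\cup x)\ge 3$, the only way $\{A-x,B\cup x\}$ fails to be a vertical $2$-separation is $r(A-x)<2$, that is, $r(A)\le 2$; combined with $r(A)\ge 2$, we obtain $r(A)=2$, for otherwise the minimality of $A$ would be contradicted.

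The main (mild) obstacle is handling the boundary case $r(A)=2$ in the first part: moving $x$ across collapses the separation to a $1$-separation rather than a smaller vertical $2$-separation, so one must invoke the vertical connectivity of $H$ rather than the minimality of $A$ to conclude. The rest is routine rank bookkeeping.
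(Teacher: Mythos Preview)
Your proof is correct and follows the same approach as the paper's: move $x$ across the partition, bound $\lambda(A-x)$, and invoke either the minimality of $A$ or the vertical connectivity of $H$ to obtain a contradiction or force $r(A)=2$. The paper's write-up is terser (it absorbs your case split in the first part into the single line ``by minimality, $r_H(A-x)=1$'' and handles the second part by the dual observation $\lambda_H(A-x)\le\lambda_H(A)$ directly), but the content is the same.
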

\begin{proof}
First we will prove that $(A-Y)\i \cl_H(B)=\emp$. Suppose for a contradiction that $x\in (A-Y)\i \cl_H(B)$. Then $\lambda_H(A-x)\le \lambda_H(A)$. By the minimality of $A$, $r_H(A-x)<2$ and so $r_H(A-x)=1$ and $r_H(A)=2$. But this implies that $\lambda_H(A-x)\le \lambda_H(A)-1=0$. A contradiction to the vertical connectivity of $H$.

For the second part, consider $x\in (A-Y)\i \cl\s_H(B)$. Again, $\lambda_H(A-x)\le \lambda_H(A)$ and, therefore, $r_H(A-x)=1$. So, $r_H(A)=2$ and $x$ is a coloop of $H|A$.
\end{proof}

\begin{lemma}\label{2-cocirc}
Let $H$ be a vertically connected but not vertically $3$-connected matroid. Suppose that $z$ is an element of $H$ such that $H/z$ is vertically $3$-connected. If $A$ is a minimal vertical $2$-separating set of $H$ with respect to containing $z$, then $A$ is a rank-$2$ cocircuit of $H$.
\end{lemma}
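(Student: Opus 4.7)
The plan is to set $B:=E(H)-A$, exploit the vertical $3$-connectivity of $H/z$ together with Lemma~\ref{vertsep}, and show that $B$ is a hyperplane of $H$. First I would observe that $z$ is a non-loop: otherwise $H/z=H\del z$, and then $(A-z,B)$ would inherit a vertical $2$-separation from $(A,B)$, contradicting the vertical $3$-connectivity of $H/z$. Thus $r_{H/z}(X)=r_H(X\u z)-1$ for every $X\cont E(H)-z$. Since $\{A,B\}$ is a vertical $2$-separation of $H$ we have $\lambda_H(A)\le 1$, and since $H$ is vertically connected with $r_H(A),r_H(B)\ge 2$ we also have $\lambda_H(A)\ge 1$; hence $\lambda_H(A)=1$. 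Applying Lemma~\ref{vertsep} with $Y=\{z\}$ yields $(A-z)\i\cl_H(B)=\emp$.

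Next I would compute, using $z\in A$,
\[
\lambda_{H/z}(A-z)=r_H(A)+r_H(B\u z)-r(H)-1=\lambda_H(A)+\bigl(r_H(B\u z)-r_H(B)\bigr)-1,
\]
so $\lambda_{H/z}(A-z)=1$ when $z\notin\cl_H(B)$, and $\lambda_{H/z}(A-z)=0$ when $z\in\cl_H(B)$. If $z\in\cl_H(B)$, then $r_{H/z}(A-z)=r_H(A)-1\ge 1$ and $r_{H/z}(B)=r_H(B)-1\ge 1$, so $(A-z,B)$ is a vertical $1$-separation of $H/z$, contradicting its vertical connectivity; therefore $z\notin\cl_H(B)$. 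With this in hand, $r_{H/z}(B)=r_H(B)\ge 2$ and $\lambda_{H/z}(A-z)=1$, so ruling out a vertical $2$-separation of $H/z$ forces $r_H(A)-1=r_{H/z}(A-z)\le 1$; combined with $r_H(A)\ge 2$ this gives $r_H(A)=2$.

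Combining $z\notin\cl_H(B)$ with Lemma~\ref{vertsep} gives $A\i\cl_H(B)=\emp$, so $B$ is a flat of $H$. From $r_H(A)=2$ and $\lambda_H(A)=1$ one obtains $r_H(B)=r(H)-1$, making $B$ a hyperplane of $H$ and $A$ a rank-$2$ cocircuit. The delicate point I expect to handle most carefully is the case split on whether $z\in\cl_H(B)$: Lemma~\ref{vertsep} alone controls only $A-z$, and closing the gap at $z$ itself is precisely where the hypothesis on $H/z$ enters.
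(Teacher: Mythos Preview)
Your proof is correct and follows essentially the same route as the paper's: show $z$ is not a loop, show $z\notin\cl_H(B)$, conclude via Lemma~\ref{vertsep} that $B$ is a flat, and then use the vertical $3$-connectivity of $H/z$ to force $r_{H/z}(A-z)\le 1$ and hence $r_H(A)=2$. The only cosmetic difference is that the paper phrases the step $z\notin\cl_H(B)$ through the $2$-sum decomposition $H=L\oplus_2 K$ (arguing that $z$ is not parallel to the basepoint $p$), whereas you reach the same conclusion by a direct computation of $\lambda_{H/z}(A-z)$ and a case split; the underlying contradiction in both arguments is the same vertical $1$-separation of $H/z$.
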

\begin{proof}
Note that $z$ is not a loop of $H$. Let $B:=E(H)-A$. We may write $H=L\oplus_2 K$ with $E(L)=A\u p$, where $p$ is the base point of the $2$-sum. Since $r_{H/z}(A-z)$ and $r_{H/z}(B)$ are both at least $1$ and $H/z$ is vertically connected, then $z$ is not in parallel with $p$ in $L$. So, $z\notin \cl_H(B)$. Hence, by Lemma \ref{vertsep} for $Y=\{z\}$, $B$ is a flat of $H$. This implies that $r_{H/z}(B)=r_H(B)$ and, therefore, $\lambda_{H/z}(A)=\lambda_H(A)$. But $H/z$ is vertically $3$-connected, thus $r_{H/z}(A-z)\le 1$. Hence, $r_H(A)=2$. Thus, $r_H(B)=r(H)-1$. So, $B$ is an hyperplane and $A$ is a rank-$2$ cocircuit of $H$.
\end{proof}

The next Lemma has a straightforward proof.

\begin{lemma}\label{cocircuit-union}
If $M$ is a $3$-connected matroid, $r(M)\ge 4$ and $C\s$ and $D\s$ are distinct cocircuits of $M$, then $r_M(C\s\u D\s) \ge 4$.
\end{lemma}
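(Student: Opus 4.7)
My plan is to argue by contradiction, supposing $r_M(C\s \u D\s) \le 3$, and derive either $C\s = D\s$ or a failure of $3$-connectedness.

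I would first record two standing facts about any cocircuit $C\s$ of $M$ under our hypotheses: (i) $|C\s| \ge 3$, since $3$-connectedness together with $r(M) \ge 4$ forces $|E(M)| \ge r(M) + 2 \ge 6$ and rules out cocircuits of size less than $3$; and (ii) $r_M(C\s) \ge 3$, because $E(M) \del C\s$ is a hyperplane of rank $r(M) - 1 \ge 3$ and hence has at least $3$ elements, so $3$-connectedness applied to the partition $(C\s, E(M) \del C\s)$ forces $\lambda_M(C\s) \ge 2$, giving $r_M(C\s) = \lambda_M(C\s) + 1 \ge 3$. The contradictory assumption then pins down $r_M(C\s) = r_M(D\s) = r_M(C\s \u D\s) = 3$, and $F := \cl_M(C\s \u D\s)$ is a rank-$3$ flat of $M$ containing both cocircuits.

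I would then split on $|E(M) \del F|$. In the main case $|E(M) \del F| \ge 3$, since also $|F| \ge |C\s| \ge 3$, $3$-connectedness yields $\lambda_M(F) \ge 2$, whence $r_M(E(M) \del F) \ge r(M) - 1$. Because $E(M) \del F \cont E(M) \del C\s$ and $E(M) \del C\s$ is itself a flat of rank exactly $r(M) - 1$, the smallest flat containing $E(M) \del F$ must coincide with $E(M) \del C\s$. Running the same argument with $D\s$ in place of $C\s$ gives $\cl_M(E(M) \del F) = E(M) \del D\s$, forcing $C\s = D\s$, the desired contradiction.

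The remaining edge case $|E(M) \del F| \le 2$ is where I expect the main delicacy, since $\lambda_M(F) \ge 2$ is no longer automatic from $3$-connectedness. Subadditivity gives $r(M) \le r_M(F) + |E(M) \del F| \le 5$, so $r(M) \in \{4, 5\}$. If $r(M) = 4$, then $F$ is a hyperplane of $M$, making $E(M) \del F$ a cocircuit of size at most $2$, in conflict with fact (i). If $r(M) = 5$, the same rank inequality forces $r_M(E(M) \del F) = 2 = |E(M) \del F|$, so $\lambda_M(F) = 3 + 2 - 5 = 0$, contradicting the connectedness of $M$. Both branches collapse, completing the proof.
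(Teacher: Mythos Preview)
Your proof is correct. The paper actually omits the proof of this lemma entirely, remarking only that it ``has a straightforward proof,'' so there is no authorial argument to compare against.

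For what it is worth, the case split on $|E(M)\setminus F|$ can be avoided by working with $H:=E(M)\setminus(C\s\cup D\s)$ instead of $F=\cl_M(C\s\cup D\s)$. Since $C\s$ and $D\s$ are distinct cocircuits, $H$ is the intersection of two distinct hyperplanes and hence $r_M(H)\le r(M)-2$; thus $\lambda_M(H)\le r_M(C\s\cup D\s)-2$. If $r_M(C\s\cup D\s)\le 3$ this gives $\lambda_M(H)\le 1$, and since $|E(M)\setminus H|\ge|C\s|\ge 3$ by cosimplicity, $3$-connectedness forces $|H|\le 1$. But $M$ has no coloops, so $|H|\le 1$ implies $r_M(C\s\cup D\s)=r_M(E(M)\setminus H)=r(M)\ge 4$, a contradiction. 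This is likely the one-paragraph argument the author had in mind; your route through $F$ reaches the same endpoint and is perfectly valid.
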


\section{Caramboles and their properties}\label{fancy}

In this section, we present some attractive properties of caramboles. From this point, we will have some notations fixed as described next. We always consider $M$ as a $3$-connected simple matroid with a $3$-connected simple minor $N$. When talking about a carambole, by standard, we will denote it by $K=x_1,\dots,x_n,y_1,\dots, y_n$, its filament by $L$, its hull by $X$ and $C\s_i:=(L-y_i)\u x_i$.

\begin{proposition}\label{1st-prop}
If $X$ is the hull of a carambole of a $3$-connected matroid $M$ with $r(M)\ge 4$, then $r\s_M(X)=2$. Moreover, the filament of such carambole has same cardinality as $X$.
\end{proposition}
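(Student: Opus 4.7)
My approach starts with the key observation that $L$ contains no cocircuit of $M$, from which both claims will follow. Since $L$ is a rank-$2$ flat in the $3$-connected matroid $M$ with $r(M)\ge 4$, the connectivity gives $\lambda_M(L)\ge 2$ and hence $r_M(E-L)=r(M)$. If a cocircuit $D\s$ of $M$ were contained in $L$, then $E-D\s\supseteq E-L$ would have rank at least $r(M)$, contradicting the fact that $E-D\s$ is a hyperplane of rank $r(M)-1$. This handles the ``moreover'' part at once: if $x_i\in L$ then $C\s_i\cont L$ is a cocircuit inside $L$; and if $x_i=x_j$ for some $i\ne j$ (with $x_i\notin L$ by the previous case), cocircuit elimination on $C\s_i$ and $C\s_j$ at $x_i$ yields a cocircuit inside $(C\s_i\u C\s_j)-x_i=L$. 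Both contradict the observation, so $|X|=n=|L|$.

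For $r\s_M(X)=2$, I would use the dual rank formula $r\s_M(X)=|X|+r_M(E-X)-r(M)$ and aim for $r_M(E-X)=r(M)-n+2$. The observation implies $L$ is independent in $M\s$, giving $r\s_M(L)=|L|+r_M(E-L)-r(M)=n$. Since each $C\s_i$ is a circuit of $M\s$, we have $x_i\in\cl_{M\s}(L-y_i)\cont\cl_{M\s}(L)$, so $r\s_M(L\u X)=r\s_M(L)=n$; applying the dual rank formula to $L\u X$ then gives $r_M(E-L-X)=r(M)-n$. Submodularity yields $r_M(E-X)\le r_M(L)+r_M(E-L-X)=r(M)-n+2$, and we need equality.

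The main obstacle is upgrading this inequality to equality by showing $M|(E-X)=M|L\oplus M|(E-L-X)$; equivalently, no circuit of $M$ contained in $E-X$ crosses $L$ and $E-L-X$. For a hypothetical crossing circuit $C$: if $|C\i L|=1$, say $C\i L=\{y\}$, then any choice of $y_j\ne y$ gives $|C\i C\s_j|=|\{y\}|=1$, violating orthogonality; if $|C\i L|=2$, say $C\i L=\{y_i,y_j\}$, then $|C\i C\s_i|=|\{y_j\}|=1$ is a violation; and if $|C\i L|\ge 3$, since $L$ is a rank-$2$ flat in the simple matroid $M$, any three elements of $C\i L$ form a triangle $T$ of $M$, and the minimality of $C$ forces $C=T\cont L$, contradicting that $C$ crosses. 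This establishes the direct-sum decomposition, giving $r_M(E-X)=r(M)-n+2$ and thus $r\s_M(X)=2$.
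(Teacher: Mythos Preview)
Your proof is correct, but it takes a different route from the paper's and does more work than necessary in the final step.

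The paper argues directly by cocircuit elimination: starting from $C\s_1,C\s_2,C\s_3$ (and using orthogonality with the triangles inside $L$), it produces the triad $\{x_1,x_2,x_3\}$, and then by symmetry every $3$-subset of $X$ is a triad, so $M\s|X\cong U_{2,n}$. Your approach instead computes $r\s_M(X)$ via the corank formula: from $X\cont\cl_{M\s}(L)$ and $r\s_M(L)=n$ you obtain $r_M(E-L-X)=r(M)-n$, and then $r_M(E-X)\le r_M(L)+r_M(E-L-X)=r(M)-n+2$, giving $r\s_M(X)\le 2$. Both arguments are clean; yours has the pleasant feature of never singling out three particular indices, while the paper's has the bonus of explicitly identifying all the triads inside $X$.

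The one redundancy is your last paragraph. You do not need the direct-sum decomposition $M|(E-X)=M|L\oplus M|(E-L-X)$ to upgrade the inequality: since $M$ is $3$-connected with $r(M)\ge 4$, the dual $M\s$ is simple, so any two elements of $X$ are independent in $M\s$ and hence $r\s_M(X)\ge 2$. Combined with your upper bound this already gives $r\s_M(X)=2$. Your orthogonality argument is correct (and it does prove the stronger fact that $L$ is a separator of $M|(E-X)$), but it is not needed for the proposition.
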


\begin{proof} By Lemma \ref{cocircuit-union}, $|X|=n$. Consider a cocircuit $C\s\cont (C\s_1\u C\s_2)-y_3$. Note that $C\s-L\cont \{x_1,x_2\}$. As $r_M(C\s)\ge 3$, $C\s$ meets $\{x_1,x_2\}$ and we may assume that $x_1\in C\s$. Moreover, $C\s$ meets $L$ and, by orthogonality, $L-y_3\cont C\s$. Now consider $D\s\cont (C\s\u C\s_3)-y_2$. Since $y_3,y_2\notin D\s$, then $D\s$ avoids $L$. Therefore, $D\s=\{x_1,x_2,x_3\}$. Analogously, $\{x_i,x_j,x_k\}$ is a triad of $M$ for each $3$-subset $\{i,j,k\}\cont[n]$ and the proposition holds.
\end{proof}

\begin{proposition}\label{prov}
Let $M$ be a $3$-connected matroid with $r(M)\ge 4$. Suppose that $K:=x_1,\dots,x_n,y_1,\dots,y_n$ is a carambole of $M$ with filament $L$ and hull $X$. Suppose that $C\in \C(M)$ and $C\ncont L$. Then the following assertions hold:
	\begin{enumerate}
		\item [(a)] If $C$ intersects $K$, then
			\begin{enumerate}
				\item [(a.1)] $X\cont C$ and $(C-L) \u A\in \C(M)$ for each $2$-subset $A$ of $L$, or
				\item [(a.2)] For some $l\in[n]$, $X-C=\{x_l\}$, $(C-L)\u y_l\in \C(M)$ and, for each $2$-subset $A$ of $L-y_l$, $(C-L)\u A\in\C(M)$.
			\end{enumerate}
		\item [(b)] $C-L\in \C(M/L)$.
		%\item [(c)] If $D$ is a circuit of $M/L\del X$, then $D$ is a circuit of $M$.
		%\item [(d)] If $N$ is a matroid having $K$ as carambole and $M/L=N/L$, then $M=N$.
	\end{enumerate}
\end{proposition}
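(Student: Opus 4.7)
My plan is to attack (a) and (b) together using orthogonality between $C$ and the cocircuits furnished by the carambole: the $n$ cocircuits $C\s_i=(L-y_i)\u x_i$ from the definition, together with the triads $\{x_i,x_j,x_k\}$ (for each $3$-subset $\{i,j,k\}\cont[n]$) guaranteed by Proposition~\ref{1st-prop}.

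For part (a), I would first show that $C\i X\neq\emp$: if $C\i K\neq\emp$ but $C\i X=\emp$, then $C\i L\neq\emp$, and each constraint $|C\i C\s_i|\neq 1$ reduces to $|C\i(L-y_i)|\neq 1$; a short case analysis on $|C\i L|$ forces $|C\i L|\geq 3$, but then three elements of the line $L$ form a triangle that is a proper sub-circuit of $C$, contradicting $C\ncont L$. Fixing any $x_l\in C\i X$, orthogonality with each triad $\{x_l,x_i,x_j\}$ for $i,j\neq l$ yields $|X-C|\leq 1$, placing us in case (a.1) or (a.2). Further orthogonality with the $C\s_i$ pins down $|C\i L|$: in (a.1) it must be $2$, while in (a.2) the set $C\i L$ is either $\{y_l\}$ or a $2$-subset of $L-y_l$. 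Write $D:=C-L$; the shape of $C$ is now determined.

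The crux is the circuit-swap statement: I would prove $D\u A\in\C(M)$ for each allowed $2$-subset $A$ of $L$, and also $D\u\{y_l\}\in\C(M)$ in case (a.2). The key step is the closure lemma $L\i\cl_M(D)=\emp$ in (a.1), resp.\ $L\i\cl_M(D)\cont\{y_l\}$ in (a.2). Suppose some $y_m$ outside the allowed set lay in $\cl_M(D)$; then any circuit $C''\cont D\u\{y_m\}$ through $y_m$ must, by orthogonality with the $C\s_i$ and the triads in $X$, satisfy $(X-x_m)\u\{y_m\}\cont C''$ and $x_m\notin C''$. Strong circuit elimination between $C''$ and the line-triangle $\{y_m,a,b\}\cont L$ (with $\{a,b\}$ the $2$-subset of $L$ already contained in $C$) then produces a circuit contained in $C$ but properly smaller than $C$, a contradiction. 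Once the closure lemma is in place, $r_M(D\u L)=|D|+1$, so the allowed elements of $L$ are parallel in $M/D$ (with $y_l$ becoming a loop in (a.2)); hence each $D\u A$ has nullity $1$, and the unique circuit it contains must include $A$ (since $D$ and each $D\u\{y\}$ are independent) and, by a final strong circuit elimination against the known circuit $D\u\{a,b\}=C$ together with orthogonality, must equal $D\u A$.

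Part (b) follows quickly. If $C\i K\neq\emp$, the circuits supplied by (a) witness that $D=C-L$ is dependent in $M/L$, while applying the closure lemma to $D-e$ shows every proper subset of $D$ remains independent in $M/L$, so $D\in\C(M/L)$. If $C\i K=\emp$, then $C$ is disjoint from $X\u L$ and a direct rank computation suffices: any $y\in L\i\cl_M(C-e)$ would yield a circuit $C^\circ\cont(C-e)\u\{y\}$ meeting $L$, whence (a) forces $C^\circ\i X\neq\emp$, contradicting $C\i K=\emp$; hence $\cl_M(C-e)\i L=\emp$, so $r_M((C-e)\u L)=|C|+1$ for every $e\in C$, making $C=C-L$ a circuit of $M/L$. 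The main obstacle I anticipate is the closure lemma for case (a.1), since the sub-circuit production requires carefully orchestrating orthogonality and strong circuit elimination; after that, the remainder of the argument assembles routinely.
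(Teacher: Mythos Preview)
Your plan for (a) is sound and runs parallel to the paper's argument: both use orthogonality with the $C\s_i$ together with the triads in $X$ from Proposition~\ref{1st-prop} to force $|X-C|\le1$, then move the $L$-part of $C$ around via circuit elimination against triangles in $L$. The paper organizes this as an iterated one-step swap lemma; you instead isolate the closure statement $L\cap\cl_M(D)=\emp$ (resp.\ $\cont\{y_l\}$) and read the swaps off from $r_M(D\cup L)=|D|+1$. These are equivalent organizations of the same computation. Note only that your ``final'' strong circuit elimination will in general need two applications when the target $A$ is disjoint from $C\cap L$.

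There is, however, a genuine slip in your argument for (b). From $\cl_M(C-e)\cap L=\emp$ you deduce $r_M((C-e)\cup L)=|C|+1$; but this inference is invalid: you have only ruled out loops of $L$ in $M/(C-e)$, not parallel pairs, so $r_{M/(C-e)}(L)$ could still drop to $1$. The repair is immediate and uses the same idea. Suppose $r_M((C-e)\cup L)\le|C|$; then for any two $y,y'\in L$ the set $(C-e)\cup\{y,y'\}$ is dependent, and since both $C-e$ and $\{y,y'\}$ are independent there is a circuit $C^\circ\cont(C-e)\cup\{y,y'\}$ with $C^\circ\ncont L$ meeting $L$. Now (a) applied to $C^\circ$ forces $C^\circ\cap X\neq\emp$, contradicting $C\cap K=\emp$. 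The same defect affects your $C\cap K\neq\emp$ branch (``applying the closure lemma to $D-e$''), but there you do not need the closure lemma at all: for any $2$-subset $A$ allowed by (a), the set $(D-e)\cup A$ is a proper subset of the circuit $D\cup A$, hence independent, and since $A$ spans $L$ this gives $r_M((D-e)\cup L)=|D|+1$ directly.
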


\begin{proof}We prove first:

\begin{rot}\label{prov-1}
Let $\{i,j\}$ be a $2$-subset of $[n]$ and let $D$ be a circuit of $M$ such that $\{y_i\}\cont D\i L\cont\{y_i,y_j\}$. Then $|X-D|\le 1$ and one of the following alternatives holds:
\begin{enumerate}
\item [(i)] $D\i L=\{y_i\}$, $X-D=\{x_i\}$ and $D_1:=(D-L)\u\{y_j,y_k\}\in \C(M)$ for each $k\in[n]-\{i,j\}$; or
\item [(ii)] $D\i L=\{y_i,y_j\}$ and for each $k\in [n]-\{i,j\}$, one of the following holds:
	\begin{enumerate}
	 \item [(ii.1)]$X-D=\{x_k\}$ and $D_1:=(D-L)\u y_k\in \C(M)$, or
	 \item [(ii.2)]$X-D\neq\{x_k\}$ and $D_1:=(D-L)\u\{y_k,y_j\}\in \C(M)$.
	\end{enumerate}
\end{enumerate}
\end{rot}
\begin{rotproof}
Note that $y_i\in C\s_j\i D\cont \{x_j,y_i\}$. By orthogonality, $x_j\in D$. By Proposition \ref{1st-prop}, $r\s_M(X)=2$ and, by orthogonality with $D$, $|X-D|\le 1$. Let $k\in[n]-\{i,j\}$ and let $D_1$ be a circuit of $M$ with $x_j\in D_1\cont (D\u\{y_i,y_j,y_k\})-y_i$. Since $x_j\in D_1\i C\s_j\cont\{x_j,y_k\}$, thus, by orthogonality, $y_k\in D_1$. Let $D_2$ be a circuit of $M$ with $x_j\in D_2\cont (D_1\u\{y_i,y_j,y_k\})-y_k$. As $D_1\cont D\u\{y_j,y_k\}$, then $D_2\cont D\u \{y_i,y_j\}=D\u y_j$.

First, we consider the case that $D\i L=\{y_i\}$. By orthogonality with $C\s_i$, $x_i\notin D$. As $|X-D|\le 1$, $X-x_i\cont D$. As $D_2\cont D\u y_j$ and $x_i\notin D$, then $x_i\notin D_2$. Moreover, $|D_2\i L|\cont\{y_i,y_j\}$. So, $C\s_i\i D_2\cont \{y_j\}$ and, by orthogonality, $y_j\notin D_2$. So, $D_2\cont D$ and, therefore, $D_2=D$. Since $D_2-L\cont D_1-L\cont D-L=D_2-L$, then $D_1-L=D-L$. We already checked that $y_k\in D_1$. Since $x_k\in D-L=D_1-L$, then, by orthogonality with $C\s_k$, $y_j\in D_1$. So, $D_1=(D-L)\u \{y_j,y_k\}$ and (i) holds in this case.

So, assume that $D\i L=\{y_i,y_j\}$. As $D_2\cont D\u y_j=D$, hence $D_2=D$ and $D_1-L=D-L$ again. We already checked that $y_k\in D_1$. If $X-D=\{x_k\}$, as $D_1\i L\cont\{y_j,y_k\}$, then, by orthogonality with $C\s_k$, $y_j\notin D_1$. So, $D_1=(D-L)\u y_k$ and we have (ii.1). Otherwise, $X\cont D$ and, by orthogonality with $C\s_k$, $y_j\in D_1$ and (ii.2) holds. So, \ref{prov-1} holds.
\end{rotproof}

By orthogonality with the $C\s_i$'s, each circuit intersecting $K$ but not contained in $L$, intersects $L$ in $\{y_i\}$ or $\{y_i,y_j\}$ for some $i,j\in [n]$. Now, it is clear that (a) follows by applying \ref{prov-1} iteratively.

For item (b), if $D$ is a circuit of $M$ such that $\emptyset\subsetneq D-L\cont C-L$, by item (a), $(D-L)\u (C\i L)$ contains a circuit, but this set is contained in $C$, so it is $C$ and $C-L=D-L$. This finishes the proof.
\end{proof}

\begin{corollary}\label{triangle-carambole}
If $L$ is a filament of a $3$-connected matroid $M$ with $r(M)\ge 4$, then $M/L$ is $3$-connected. In particular, all triangles of $M$ meeting $K$ are contained in $L$
\end{corollary}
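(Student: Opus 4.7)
My plan is to prove $M/L$ is $3$-connected by showing it is simple, since then $M/L = \si(M/L)$, which is $3$-connected by the definition of filament. Loopless-ness is immediate: $L$ is a line of $M$, hence a rank-$2$ flat, so $\cl_M(L) = L$ and no element outside $L$ is a loop of $M/L$.

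For the absence of parallel pairs I would argue by contradiction. Suppose $\{e,f\} \subseteq E(M) - L$ is a parallel pair of $M/L$. By Proposition \ref{prov}(b) there is a circuit $C$ of $M$ with $C - L = \{e,f\}$ and $C \cap L \neq \emptyset$. Since $|C - L| = 2 < n$, alternative (a.1) of Proposition \ref{prov}(a) is excluded, forcing (a.2) with $n = 3$, $\{e,f\} = X - \{x_l\} = \{x_a, x_b\}$, and the triangle $\{x_a, x_b, y_l\}$ of $M$.

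The main obstacle is extracting a contradiction with $3$-connectivity of $M$ from this forced structure. My approach is to transfer the triad $X$ (Proposition \ref{1st-prop}) into $\si(M/L)$: since $X$ is disjoint from $L$ it is a cocircuit of $M/L$, and deleting $x_b$ in the simplification turns it into the $2$-cocircuit $\{x_a, x_l\}$ of $\si(M/L)$. A $2$-cocircuit in a $3$-connected matroid forces $|E(\si(M/L))| \le 3$ (otherwise it yields a $2$-separation by the Whitney connectivity function). Combined with $r(\si(M/L)) = r(M)-2 \ge 2$ and simplicity, this gives $\si(M/L) = U_{2,3}$ and $r(M) = 4$. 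A brief analysis of the parallel classes of $X$ in $M/L$ shows $X$ cannot be a single class (else $\si(M/L)$ would acquire the coloop $\{x_a\}$), so $\{x_a,x_b\}$ and $\{x_l\}$ are distinct parallel classes; counting elements of $\si(M/L)$ then pins down $|E(M)| = 7$, with a single extra element $w \in E(M) - L - X$.

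In this tiny case, the set $F := L \cup \{x_a, x_b\}$ has $|F| = 5$ and $r_M(F) = 3$, so $\cl_M(F)$ is a rank-$3$ flat of the rank-$4$ matroid $M$. Either both $x_l, w$ lie in $\cl_M(F)$ (forcing $r(M) = 3$, impossible), or exactly one does (making the other a coloop of $M$), or neither does (so $\{x_l, w\}$ is a $2$-cocircuit of $M$); each case contradicts $3$-connectivity of $M$ since $|E(M)| = 7$, completing the contradiction. Thus $M/L$ is simple, hence $3$-connected. The ``in particular'' conclusion then follows: a triangle $T$ of $M$ meeting $K$ but not contained in $L$ would, by Proposition \ref{prov}(b), give $T - L \in \C(M/L)$ of size $1$, $2$, or $3$; sizes $1$ and $2$ contradict the simplicity of $M/L$, while size $3$ forces $T \cap L = \emptyset$ and Proposition \ref{prov}(a) then produces either $T = X$ together with the circuits $X \cup A$ for each $2$-subset $A \subseteq L$, or a circuit $T \cup \{y_l\}$ properly containing $T$, each violating circuit minimality.
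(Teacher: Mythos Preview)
Your argument is essentially correct, but it takes a much longer detour than necessary, and one step is under-justified.

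Both your proof and the paper's reduce (via Proposition~\ref{prov}) to the situation $n=3$ with a triangle $\{x_a,x_b,y_l\}$ in $M$ where $\{a,b,l\}=\{1,2,3\}$. At this point the paper finishes in one line: $C^*_a\cup C^*_b=L\cup\{x_a,x_b\}$ is the union of two triangles sharing $y_l$, so $r_M(C^*_a\cup C^*_b)\le 3$, contradicting Lemma~\ref{cocircuit-union}. You instead pass to $\si(M/L)$, use the triad $X$ from Proposition~\ref{1st-prop} to force $\si(M/L)\cong U_{2,3}$ and $r(M)=4$, pin down $|E(M)|=7$, and do a case analysis on a rank-$3$ flat. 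This works, but it replaces a two-line rank computation with a page of structural analysis; once you have the triangle $\{x_a,x_b,y_l\}$ you already have everything needed to invoke Lemma~\ref{cocircuit-union} directly.

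The under-justified step is the claim $|E(M)|=7$. Knowing that $\{x_a,x_b\}$ and $\{x_l\}$ lie in distinct parallel classes of $M/L$ and that $|E(\si(M/L))|=3$ does not by itself bound the sizes of those three classes. What you need (and what does hold) is that $\{x_a,x_b\}$ is the \emph{only} nontrivial parallel class of $M/L$: any parallel pair of $M/L$ is $C-L$ for some circuit $C$ of $M$ with $|C-L|=2$, and a second application of Proposition~\ref{prov}(a) forces $C-L=X-x_m$ for some $m$; since you have already excluded the pairs containing $x_l$, only $\{x_a,x_b\}$ survives. With this said, $|E(M/L)|=4$ and your count goes through. (A minor point: the existence of $C$ with $C-L=\{e,f\}$ is the standard description of circuits in a contraction, not Proposition~\ref{prov}(b), which is the forward implication.) The paper also inverts your order: it proves the ``in particular'' statement first and observes that it implies simplicity of $M/L$, whereas you prove simplicity first and then derive the triangle statement; either direction works.
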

\begin{proof} By the definition of filament, $\si(M/L)$ is $3$-connected. So, it is enough to prove the second part of the corollary. Suppose for a contradiction that $T$ is a triangle of $M$ meeting $K$ but not contained in $L$. By Proposition \ref{prov}, $|L\i T|\ge 1$ and $|X\i T|\ge |X|-1$. Then $n=3$ and we may assume that $T=\{x_1,x_2,y_3\}$. Now, it follows that $r_M(C\s_1\u C\s_2)=3$. A contradiction to Lemma \ref{cocircuit-union}.
\end{proof}

The next Proposition establishes that $M$ can be rebuild in an unique way from $M/L$. 

\begin{proposition}\label{reconstruction}
Let $M$ and $H$ be $3$-connected matroids with rank at least four, both having a common carambole with filament $L$. Then $M=H$ if $M/L=H/L$.
\end{proposition}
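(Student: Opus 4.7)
The plan is to show $\C(M) = \C(H)$; since $E(M) = L \u E(M/L) = L \u E(H/L) = E(H)$, this will give $M = H$.

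The circuits of $M$ contained in $L$ are the $3$-subsets of $L$, since $L$ is a line of size $n\ge 3$ in $M$; the same description holds in $H$. For a circuit $C$ of $M$ with $C \ncont L$, set $C' := C - L$; Proposition \ref{prov}(b) gives $C' \in \C(M/L) = \C(H/L)$. My key claim is that $|X \i C'| \in \{0, n-1\}$, and that the circuits $C$ of $M$ with $C - L = C'$ are determined by $C'$, $X$, and $L$: namely $\{C'\}$ if $X \i C' = \emp$, and $\{C' \u y_l\} \cup \{C' \u A : A \text{ a 2-subset of } L - y_l\}$ if $X - C' = \{x_l\}$. Since these data are shared by $M$ and $H$, this will yield $\C(M) = \C(H)$.

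To establish the key claim, I would first use that every $3$-subset of $X$ is a cocircuit of $M$ by Proposition \ref{1st-prop}, and hence a cocircuit of $M/L$ (being disjoint from $L$). Orthogonality of $C'$ with these triads forces $|X \i C'| \notin \{1, 2, \ldots, n-2\}$, leaving $\{0, n-1, n\}$. Proposition \ref{prov}(a) applied to any circuit $C$ of $M$ with $C - L = C'$ and $C \i L \neq \emp$ gives $X \cont C$ or $|X - C| = 1$, i.e., $|X \i C'| \in \{n-1, n\}$ in that case. If $X \i C' = \emp$, then no such $C$ meets $L$, so $C = C'$; moreover $C'$ must be dependent in $M$, for otherwise the rank equation $r_M(C' \u L) = |C'|+1$ would yield some $y_i \in \cl_M(C')$ with $C' \u y_i$ a circuit of $M$, and Proposition \ref{prov}(a) applied to $C' \u y_i$ would force $|X \i C'| \ge n-1$. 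Minimality of $C'$ in $\C(M/L)$ then gives $C' \in \C(M)$. If $X - C' = \{x_l\}$, Proposition \ref{prov}(a.2) applied to any such $C$ directly produces the listed circuits.

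The main obstacle is excluding $X \cont C'$. If it held, the same rank-based argument would furnish a circuit $C' \u y_i$ of $M$ (with $C'$ necessarily independent in $M$, since $C' \in \C(M)$ would already contradict Proposition \ref{prov}(a) applied to $C'$ itself); then Proposition \ref{prov}(a.1) applied to $C' \u y_i$ would make $C' \u \{y_i, y_j\}$ a circuit of $M$ for every $j \neq i$, a proper superset of the circuit $C' \u y_i$, an impossibility. Thus $|X \i C'| \neq n$, completing the dichotomy.
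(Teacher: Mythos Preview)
Your overall strategy---for each $C'\in\C(M/L)$, describe the circuits $C$ of $M$ with $C-L=C'$ purely in terms of $C'$, $X$, $L$---is sound, but the exclusion of the case $X\subseteq C'$ is wrong. The rank equation $r_M(C'\cup L)=|C'|+1$ (with $C'$ independent in $M$) does \emph{not} force some $y_i\in\cl_M(C')$: it is equally compatible with $L\cap\cl_M(C')=\emptyset$ while $L\subseteq\cl_M(C'\cup y_i)$ for every $i$, in which case the minimal dependencies you produce are $C'\cup\{y_i,y_j\}$ rather than $C'\cup y_i$, and no contradiction arises. This is exactly case~(a.1) of Proposition~\ref{prov}; concretely, with $r(M)=4$ and $K$ a triweb, $X$ itself is a circuit of $M/L$, and the circuits of $M$ with $C-L=X$ are the five-element sets $K-y_1,\,K-y_2,\,K-y_3$. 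So the case $X\subseteq C'$ genuinely occurs and cannot be ruled out.

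The repair is easy: treat $X\subseteq C'$ as a third case instead of excluding it. One checks, by the same devices you already use, that the circuits of $M$ with $C-L=C'$ are then exactly $\{C'\cup A:\ A\text{ a $2$-subset of }L\}$, again determined by $C'$ and $L$, and your argument goes through. (The same slip appears in your treatment of $X\cap C'=\emptyset$, but there the conclusion survives, since whether the circuit obtained is $C'\cup y_i$ or $C'\cup\{y_i,y_j\}$, Proposition~\ref{prov}(a) still forces $|X\cap C'|\ge n-1$.) The paper's own proof sidesteps the trichotomy: given $D\in\C(H)$ it picks a circuit $D_1$ of $M$ with $D_1-L=D-L$ minimizing $|D_1\Delta D|$, and uses the exchanges in Proposition~\ref{prov}(a) to reduce $|D_1\Delta D|$ unless $D_1=D$.
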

\begin{proof}
By symmetry, it is enough to prove that $\C(H)\cont \C(M)$. Indeed, suppose that $D$ is a circuit of $H$ meeting $L$. So, $D-L$ is a circuit of $H/L=M/L$ by Proposition \ref{prov} (b) on $H$. Thus there is a circuit $C$ of $M$ such that $C-L=D-L$. Since $X-D=X-C$, then by Proposition \ref{1st-prop} (a) on $M$, it follows that $(C-L)\cup (D\cap L)=D$ is a circuit of $M$. Thus $\C(H)\cont \C(M)$.
\end{proof}

Such reconstruction of $M$ from $M/L$ is made in a more explicit way as follows. Let $X:=\{x_1,\dots,x_n\}$ and $L:=\{y_1,\dots,y_n\}$. Let $\Theta_n$ be the matroid on $X\u L$ such that $\Theta_n\s$ has $L$ as a line, $X$ as a coline and, for $i\in[n]$,  $C\s_i:=(L-y_i)\u x_i$ as a cocircuit, as defined by Oxley~[Proposition 11.5.1]\cite{Oxley} (see also \cite{Delta}). Then 

\begin{corollary}\label{deltawye}
Consider $L$ and $X$ as described above. Suppose that $H$ is a cosimple matroid with $X\cont E(H)$, $r\s_H(X)=2$ and $L\i E(H)=\emp$. Suppose also that $M$ is a $3$-connected simple matroid having $x_1,\dots,x_n,y_1,\dots,y_n$ as carambole. Then $H=M/L$ if and only if $M\s=P_X(H\s,\Theta_n)$.
\end{corollary}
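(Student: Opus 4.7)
The plan is to apply Proposition \ref{reconstruction}. For $(\Leftarrow)$, suppose $M\s = P_X(H\s, \Theta_n)$. A standard property of the generalized parallel connection gives $P_X(H\s, \Theta_n)\del(E(\Theta_n) - X) = H\s$; since $E(\Theta_n) - X = L$, we obtain $M\s\del L = H\s$, which dualizes to $M/L = H$.

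For $(\Rightarrow)$, suppose $H = M/L$ and define $M'$ by $M'\s := P_X(H\s, \Theta_n)$. I will show $M = M'$ via Proposition \ref{reconstruction}. As a preliminary, $H = M/L$ is $3$-connected and simple (so $H\s$ is $3$-connected and cosimple): $L$ is a flat of $M$, so $M/L$ has no loops; any parallel pair in $M/L$ would come from a circuit $C$ of $M$ with $|C \del L| = 2$, and a short case analysis using Corollary \ref{triangle-carambole} (for the triangle case) and Proposition \ref{prov}(a) (for larger $C$) excludes all possibilities. Hence $M/L$ is simple and, by the filament property, $3$-connected. Next, $(M'/L)\s = M'\s\del L = H\s$ gives $M'/L = H$. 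The carambole structure in $M'$ follows from $M'\s|(X\u L) = \Theta_n$: each $C\s_i$ is a circuit of $\Theta_n$, hence a cocircuit of $M'$; a rank computation via $r(P_X(H\s, \Theta_n)) = r(H\s) + r(\Theta_n) - r_{\Theta_n}(X) = r(H\s) + n - 2$ yields $r_{M'}(L) = 2$, and simplicity of $\Theta_n|L$ makes $L$ a line of $M'$; the filament condition in $M'$ follows from $M'/L = H$ being $3$-connected.

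The remaining step, which is the main obstacle, is to verify $3$-connectedness of $M'$ (the rank inequality $r(M') = r(H) + 2 = r(M) \ge 4$ is immediate). For this, I would use that $\Theta_n|X = H\s|X = U_{2, n}$ (both obtained from the carambole-side simplicity of $M\s$, noting $H\s|X = M\s|X$ since $X \i L = \emp$) and that $X$ is a modular rank-$2$ flat of $\Theta_n$, then invoke the classical theorem that the generalized parallel connection of two $3$-connected matroids along a common modular flat of rank at least $2$ is itself $3$-connected. With all the hypotheses of Proposition \ref{reconstruction} verified, we conclude $M = M'$, i.e., $M\s = P_X(H\s, \Theta_n)$.
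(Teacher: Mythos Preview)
The paper states Corollary~\ref{deltawye} without proof, presenting it as a direct consequence of Proposition~\ref{reconstruction} together with standard facts about $\Theta_n$ and the generalized parallel connection (the generalized $\Delta$--$Y$ / segment--cosegment exchange, as the sentence following the corollary indicates). Your argument is exactly the route the paper has in mind: the $(\Leftarrow)$ direction is the restriction property $P_X(H\s,\Theta_n)|E(H\s)=H\s$, and the $(\Rightarrow)$ direction builds $M'$ with $(M')\s=P_X(H\s,\Theta_n)$, checks that $M'$ carries the same carambole and satisfies $M'/L=H=M/L$, and then invokes Proposition~\ref{reconstruction}.

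Your identification of the $3$-connectedness of $M'$ as the one nontrivial verification is accurate. Two small remarks on how this is usually handled. First, the modularity you need is that $X$ is a modular flat of $\Theta_n$ (this is the key structural property of $\Theta_n$, see Oxley~\cite[11.5]{Oxley}); you do not need $X$ to be modular in $H\s$, so be careful about which argument of $P_X$ carries the modular flat in the convention you cite. Second, rather than a general ``$3$-connected $\oplus$ $3$-connected along a rank-$2$ modular flat is $3$-connected'' statement (which requires some hypotheses to be true), the cleanest route is to quote directly that the segment--cosegment exchange preserves $3$-connectedness, which is established in \cite{Delta} and in Oxley~\cite[Chapter 11.5]{Oxley}; this is precisely the operation $(M')\s=P_X(H\s,\Theta_n)$ applied to the $3$-connected matroid $H\s$ with the cosegment $X$. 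With that citation in place your proof is complete and matches the paper's intended (unwritten) argument.
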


Such reconstruction is the same as the one used in generalized delta-wye exchanges. In the next proposition, we see that the existence of filaments also guarantees the existence of certain independent sets of $N$-contractible elements. This proposition will be proved in Section \ref{sec-proofs}.

\begin{proposition}\label{independent-seeds}
Let $M$ be a $3$-connected matroid with an $N$-minor. Suppose that $r(M)\ge 4$. If $X$ is the hull of an $N$-carambole, then $X$ is an independent set of $M$ whose elements are $N$-contractible.
\end{proposition}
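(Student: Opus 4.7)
The plan is to split the proof into two claims: that $X$ is independent in $M$, and that each $x_i \in X$ is $N$-contractible.

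\textbf{Independence of $X$.} Suppose for contradiction that some circuit $C$ of $M$ is contained in $X$. For every $i\in[n]$ the cocircuit $C\s_i=(L-y_i)\cup x_i$ meets $X$ exactly in $\{x_i\}$, so $C\cap C\s_i\cont\{x_i\}$. Orthogonality of circuits and cocircuits forces $|C\cap C\s_i|\ne 1$, so $x_i\notin C$. Since this holds for every $i$ and $C\cont X$, we get $C=\emp$, a contradiction.

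\textbf{$N$-contractibility of each $x_i$.} Fix $i\in[n]$. By the definition of an $N$-carambole the filament $L$ is vertically $N$-contractible, so $\si(M/L)$ has an $N$-minor; Corollary~\ref{triangle-carambole} upgrades this to ``$M/L$ is $3$-connected with an $N$-minor.'' Also by Corollary~\ref{triangle-carambole}, $x_i$ lies in no triangle of $M$ (since $x_i\in K-L$), hence $x_i\notin\cl_M(L)$ (else $\{x_i,y_j,y_k\}$ would be a forbidden triangle) and $M/x_i$ has no parallel pair. Because $M$ is simple, $M/x_i$ is then itself simple. For the $N$-minor, I would exhibit an $N$-minor of $M/L$ whose contraction set contains $x_i$: the cocircuit $C\s_i$ allows the interchange of the role of $y_i$ and $x_i$ in a contraction sequence from $M$ to $N$ that factors through $M/L$, yielding that $(M/x_i)/L=(M/L)/x_i$ has an $N$-minor, and hence so does $M/x_i$.

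\textbf{$3$-connectivity of $M/x_i$.} This is the main obstacle. Assume for contradiction that $(A,B)$ is a $2$-separation of $M/x_i$. A direct computation using $r_{M/x_i}(S)=r_M(S\u x_i)-1$ yields $r_M(A\u x_i)+r_M(B\u x_i)=r(M)+2$; combining with $\lambda_M(A\u x_i)=r_M(A\u x_i)+r_M(B)-r(M)$ and the $3$-connectivity of $M$ (which forbids $\lambda_M\le 1$ when both sides have at least two elements), one concludes $x_i\in\cl_M(B)$ and $\lambda_M(A\u x_i)=2$; the symmetric argument gives $x_i\in\cl_M(A)$ too. Thus $(A\u x_i,B)$ is a $3$-separation of $M$ with $x_i$ in the guts. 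Now choose such a separation with $A$ minimal subject to $L\cont A$ (after replacing $(A,B)$ by $(B,A)$ if necessary; the case where $L$ is forced to straddle every $2$-separation of $M/x_i$ is ruled out similarly, using that $L$ is a rank-$2$ flat in $M/x_i$). Lemma~\ref{vertsep} applied to $M/x_i$ with $Y=L$ gives $(A-L)\i\cl_{M/x_i}(B)=\emp$. Orthogonality of each triad $C\s_j$ ($j\ne i$)---which is a cocircuit of $M/x_i$---with the $2$-separation, together with Proposition~\ref{prov} on circuits of $M/x_i$ avoiding $x_i$, forces $X-x_i\cont A$; the coline relation $r\s_M(X)=2$ of Proposition~\ref{1st-prop} and a rank comparison then transfer the separation back to $M$, contradicting its $3$-connectivity. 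The most delicate case is $n=3$, where $X$ itself is a single triad of $M$ and all information must be squeezed from the three cocircuits $C\s_1,C\s_2,C\s_3$ and the triangle $L$; the bookkeeping across minimality, closures, and cocircuit orthogonality is where the argument is most technical.
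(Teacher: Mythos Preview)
Your independence argument is correct and coincides with the paper's: orthogonality of any $C\cont X$ with each $C\s_i$ forces $C=\emp$.

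For the $N$-contractibility of $x_i$, however, your route is both far more complicated than necessary and genuinely incomplete. The paper dispatches this in one line by invoking Corollary~\ref{filament-seed}, whose content was already established in Section~\ref{section-structures}: by Corollary~\ref{carambole-vertbarrier}, $(C\s_i,y_i)$ is a disconnected $(M,N)$-vertbarrier with coloop $x_i$; by Corollary~\ref{coloop} (which is Lemma~\ref{w38}), $x_i$ is vertically $N$-contractible; and Corollary~\ref{triangle-carambole} then upgrades this to $N$-contractible since $x_i$ lies in no triangle. That is the whole argument.

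Your direct attack has two real gaps. First, the $N$-minor step is a hand-wave: knowing that $M/L$ has an $N$-minor does not by itself yield that $M/x_i$ does, and ``the cocircuit $C\s_i$ allows the interchange of the role of $y_i$ and $x_i$'' is not a valid move as written---contracting does not turn a cocircuit into a series pair. What actually works here is precisely Lemma~\ref{w37}: since some $y_j\in C\s_i$ has $\{y_j,y_i\}$ vertically $N$-contractible, so does $\{x_i,y_i\}$, and in particular $M/x_i$ has an $N$-minor. Second, and more seriously, your $3$-connectivity argument is only a sketch. The phrase ``orthogonality of $C\s_j$ with the $2$-separation'' has no meaning (orthogonality is between circuits and cocircuits); the claim that $L$ can always be placed on one side of a $2$-separation of $M/x_i$ is asserted but not proved (you would need $L$ to be a flat of $M/x_i$, which you have not shown); and the final ``rank comparison'' that is supposed to transfer the separation back to $M$ is never carried out. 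You yourself flag the $n=3$ case as ``most technical'' and leave it open. All of this is bypassed by Lemma~\ref{w38}, which gives the $3$-connectivity of $\si(M/x_i)$ for the coloop of a vertbarrier in one stroke; you should use it.
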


\section{Other structures and their properties}\label{section-structures}

In this section, we will define some structures and establish some of their properties. We keep the notations fixed in the beginning of Section \ref{fancy}.  We say that a line in a matroid is \defin{non-trivial} if it has at least three points.

A vertically contractible element of $\si(M/x)$ may be not vertically contractible in $M$. This is the greatest difficulty to apply an inductive strategy to our problem. Whittle \cite{Whittle} characterized the structures that may appear in such situation. We will describe such structures and strengthen their characterizations next.

An \defin{$(M,N)$-vertbarrier} is a pair $(C\s,p)$, where $C\s$ is a rank-$3$ cocircuit of $M$, $p\in \cl_M(C\s)-C\s$ and $si(M/x,p)$ is $3$-connected with an $N$-minor for some $x\in C\s$ (and therefore for all $x\in C\s$ as established ahead, in Lemma \ref{w37}). We say that $(C\s,p)$ \defin{contains} $x$ if $x\in C\s$. The next Lemma is a generalization of Lemma 3.6 of \cite{Whittle}.

\begin{lemma}\label{w36}Suppose that $x$ and $p$ are elements of $M$ such that $\{x,p\}$ is vertically $N$-contractible in $M$ but $p$ is not. Then $r(M)\ge 4$ and there is an $(M,N)$-vertbarrier $(C\s,p)$ containing $x$.
\end{lemma}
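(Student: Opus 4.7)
The plan is as follows. First I would verify that $r(M)\ge 4$. If $r(M)\le 3$, then $H:=\si(M/p)$ has rank at most $2$, so being simple it is a $U_{r,k}$ with $r\le 2$, hence $3$-connected; and $H$ inherits an $N$-minor from $\si(H/x)\cong\si(M/\{x,p\})$, which is $3$-connected with an $N$-minor by hypothesis. This would make $p$ itself vertically $N$-contractible, a contradiction.

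Keeping $H:=\si(M/p)$, the same reasoning shows that $H$ is not $3$-connected. As $H$ is simple, every $2$-separation of $H$ is automatically vertical, and since $\si(H/x)\cong\si(M/\{x,p\})$ is $3$-connected, $H/x$ is vertically $3$-connected. I then take $A$ to be a minimal vertical $2$-separating set of $H$ containing $x$; Lemma \ref{2-cocirc} yields that $A$ is a rank-$2$ cocircuit of $H$.

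The next step is to lift $A$ to a cocircuit of $M$. Since $H$ arises from $M/p$ by discarding loops and all but one representative of each parallel class, orthogonality (applied to $\hat A$ against each parallel pair meeting $A$) forces the cocircuit of $M/p$ that restricts to $A$ to be
$$\hat A := A\cup\{e\in E(M)-p:\{e,a,p\}\text{ is a triangle of }M\text{ for some }a\in A\}.$$
Then $\hat A$ is a cocircuit of $M$ avoiding $p$, with $r_{M/p}(\hat A)=r_H(A)=2$, so $r_M(\hat A\cup p)=3$.

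The main obstacle — and the last step — is to rule out $r_M(\hat A)=2$. The key observation I would use is that in a $3$-connected matroid of rank at least $4$, every cocircuit $C\s$ satisfies $r_M(C\s)\ge 3$: since $E(M)-C\s$ is a hyperplane, $\lambda_M(C\s)=r_M(C\s)+(r(M)-1)-r(M)=r_M(C\s)-1$, so $r_M(C\s)\le 2$ would give $\lambda_M(C\s)\le 1$; combined with $|C\s|\ge 3$ (no $2$-element cocircuits, since $|E(M)|\ge 4$) and $|E(M)-C\s|\ge r(M)-1\ge 3$ (as $M$ is simple), this produces a $2$-separation, contradicting $3$-connectivity. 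Therefore $r_M(\hat A)=3$ and $p\in\cl_M(\hat A)-\hat A$. Since $x\in A\subseteq\hat A$ and the hypothesis grants that $\si(M/\{x,p\})$ is $3$-connected with an $N$-minor, the pair $(\hat A,p)$ is the desired $(M,N)$-vertbarrier containing $x$.
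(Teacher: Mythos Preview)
Your proof is correct and follows essentially the same approach as the paper's: both invoke Lemma~\ref{2-cocirc} to produce a rank-$2$ cocircuit in the contraction by $p$, then use $3$-connectivity of $M$ with $r(M)\ge 4$ to force rank $3$ in $M$ and hence $p\in\cl_M(C\s)$. The only difference is that you apply Lemma~\ref{2-cocirc} to $\si(M/p)$ and then lift the cocircuit back to $M/p$ by saturating with parallel classes, whereas the paper applies Lemma~\ref{2-cocirc} directly to $M/p$ (which is already vertically connected since $M$ is $3$-connected with $|E(M)|\ge 4$), making the lifting step unnecessary.
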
%revised once
\begin{proof}
If $|E(M)|\le3$, the result is clear. Assume the contrary. So, $M/p$ is connected, and, therefore, vertically connected. Since $M/p$ is not vertically $3$-connected, then $r(M/p)\ge3$. Therefore, $r(M)\ge 4$. By Lemma \ref{2-cocirc} for $(H,z)=(M/p,x)$, $M/p$ has a rank-$2$ cocircuit $C\s$ containing $x$. Since $M$ is $3$-connected and $r(M)\ge 4$, hence $r_M(C\s)=3$. Moreover $p\in \cl_M(C\s)$, because $r_{M/p}(C\s)=2$.
\end{proof}

Whittle~\cite{Whittle} established the following two lemmas:

\begin{lemma}(Lemma 3.7 of \cite{Whittle})\label{w37}
Suppose that $C\s$ is a rank-$3$ cocircuit of $M$ and $p\in \cl_M(C\s)-C\s$.
\begin{enumerate}
\item[(a)] If $x,y\in C\s$, then  $\si(M/p,x)\cong \si(M/p,y)$.
\item[(b)] If, for some $z\in C\s$, $\{z,p\}$ is vertically $N$-contractible, then, for each $x\in C\s$, $\{x,p\}$ is vertically $N$-contractible.
\end{enumerate}
\end{lemma}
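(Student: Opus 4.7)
The plan is to reduce both statements to the folklore fact that, in a matroid $M'$ with a rank-$2$ flat $L$, the simplification $\si(M'/x)$ does not depend, up to isomorphism, on the choice of a non-loop $x\in L$. Specifically: since $p\in\cl_M(C\s)-C\s$, we have $r_M(C\s\u p)=r_M(C\s)=3$, hence $r_{M/p}(C\s)=2$. Writing $M':=M/p$, the set $C\s$ is a rank-$2$ cocircuit of $M'$ (it remains a cocircuit of $M/p$ because $p\notin C\s$) and therefore lies on the line $L:=\cl_{M'}(C\s)$ of $M'$.

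For (a), fix $x,y\in C\s\cont L$. Since $M$ is simple, neither $x$ nor $y$ is a loop of $M'$. After contracting $x$ (resp.\ $y$) in $M'$, all non-loop elements of $L-x$ (resp.\ $L-y$) lie on a common rank-$1$ flat and hence form a single parallel class; the simplification collapses this class to a single element $\star$. A direct inspection of circuits shows that both $\si(M'/x)$ and $\si(M'/y)$ have ground set $(E(M')-L)\u\{\star\}$ and the same circuit family, which depends only on $L$ and $M'$. Hence $\si(M'/x)\cong\si(M'/y)$, yielding~(a).

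Part (b) is then immediate. Suppose $z\in C\s$ and $\{z,p\}$ is vertically $N$-contractible, so that $\si(M/\{p,z\})=\si((M/p)/z)$ is $3$-connected with an $N$-minor. Applying (a) to the pair $z,x$ gives $\si(M/\{p,x\})\cong\si(M/\{p,z\})$ for each $x\in C\s$, so $\si(M/\{p,x\})$ is $3$-connected with an $N$-minor as well, and $\{x,p\}$ is vertically $N$-contractible. The only substantive step is the folklore point in (a): once one verifies that contracting distinct non-loop elements of a common line yields the same simplification, both parts follow by bookkeeping.
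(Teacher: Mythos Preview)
The paper does not give its own proof here; it simply cites Whittle. So there is no paper argument to compare against, and I evaluate your proposal on its merits.

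Your reduction is right: in $M':=M/p$ the set $C\s$ is a rank-$2$ cocircuit, and (b) follows from (a) exactly as you say. The gap is the ``folklore fact'' you invoke for (a). The assertion that \emph{for any rank-$2$ flat $L$ of a matroid $M'$ and any non-loops $x,y\in L$ one has $\si(M'/x)\cong\si(M'/y)$} is false. Take $M'$ to be the simple rank-$3$ matroid on $\{a,b,c,d,e,f\}$ whose only $3$-point lines are $\{a,b,c\}$ and $\{a,d,e\}$, with $f$ in general position; this $M'$ is even $3$-connected. For the line $L=\{a,b,c\}$ one gets $\si(M'/a)\cong U_{2,3}$ (parallel classes $\{b,c\},\{d,e\},\{f\}$) but $\si(M'/b)\cong U_{2,4}$ (parallel classes $\{a,c\},\{d\},\{e\},\{f\}$). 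So ``contracting distinct non-loop elements of a common line yields the same simplification'' is not a theorem, and your ``direct inspection of circuits'' is standing in for a genuine argument that must use more than the line $L$.

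What makes the lemma true is precisely that $C\s$ is a cocircuit, and your write-up never uses this. Here is the missing step. Set $H':=E(M')-C\s$, a hyperplane and hence a flat of $M'$. For any $x\in C\s$ the element $x$ is a coloop of $M'|(H'\cup x)$, so $(M'/x)|H'=M'|H'$ is literally independent of $x$; moreover no two elements of $H'$ acquire a new parallel relation in $M'/x$, since $x\notin\cl_{M'}(H')=H'$ forbids any triangle $\{x,h_1,h_2\}$ with $h_1,h_2\in H'$. The leftover set $C\s-x$ has rank $1$ in $M'/x$ and therefore contributes at most one further point $\star$ to the simplification. Whether $\star$ is already represented in $H'$ depends only on whether $\cl_{M'}(C\s)\cap H'$ is nonempty (orthogonality of the cocircuit $C\s$ with triangles of $M$ through $p$ rules out the one way this could vary with $x$), and when $\star$ is genuinely new, its position over $\si(M'|H')$ is determined by $S\mapsto r_{M'}(S\cup C\s)$ for $S\cont H'$, again independent of $x$. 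Filling in these details turns your outline into a proof; as written, the key hypothesis is unused and the claimed folklore lemma is false.
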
%revised once

\begin{lemma}(Lemma 3.8 of \cite{Whittle})\label{w38}
If $C\s$ is a rank-$3$ cocircuit of $M$ and $x\in C\s$ has the property that $\cl_M(C\s)-x$ contains a triangle of $M/x$, then $\si(M/x)$ is $3$-connected.
\end{lemma}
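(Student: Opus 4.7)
The plan is to argue by contradiction. I would suppose $\si(M/x)$ is not $3$-connected and lift this to a vertical $2$-separation $(A,B)$ of $M/x$ with $r_{M/x}(A), r_{M/x}(B)\ge 2$, chosen so that no parallel class of $M/x$ is split between $A$ and $B$ (so it descends from an actual $2$-separation of $\si(M/x)$). Set $F:=\cl_M(C\s)-x$, which coincides with $\cl_{M/x}(C\s-x)$ and is a rank-$2$ flat of $M/x$.

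The first step is to arrange $F\cont A$. The hypothesis provides a triangle $T\cont F$ of $M/x$; since $T$ is a $3$-element circuit its elements are pairwise non-parallel in $M/x$, and so they contribute three distinct points to the simplification of $F$. By pigeonhole, two of them lie on the same side, say $A$; any two non-parallel elements on a line span the line, so $F\cont \cl_{M/x}(A)$. I would then move $F\i B$ into $A$, checking that the resulting partition is still a vertical $2$-separation. This consolidation is where I expect the main technical obstacle: one must verify that $r_{M/x}(B\del F)\ge 2$ does not fail. A clean route is to pick $(A,B)$ from the start to maximize $|A\i F|$ among the vertical $2$-separations inherited from $\si(M/x)$; then any attempt to shift more of $F$ onto $A$ must violate a rank bound, which is what gives the desired configuration.

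With $F\cont A$ in hand, we have $C\s\cont F\u\{x\}\cont A\u\{x\}$. The second step is to translate the separation back to $M$. Using $r_M(A\u x)=r_{M/x}(A)+1$, $r_M(B\u x)=r_{M/x}(B)+1$ and $r_{M/x}(A)+r_{M/x}(B)=r(M)$, a direct computation yields $\lambda_M(A\u x)=1$ if $x\notin \cl_M(B)$ and $\lambda_M(A\u x)=2$ if $x\in \cl_M(B)$. Since $|A\u x|,|B|\ge 2$ and $M$ is $3$-connected, we need $\lambda_M(A\u x)\ge 2$, forcing $x\in \cl_M(B)$. Hence there is a circuit $C$ of $M$ with $x\in C\cont B\u\{x\}$. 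To finish, I apply orthogonality between $C$ and $C\s$: since $x\in C\i C\s$ and $|C\i C\s|\ne 1$, there is some $y\in (C\i C\s)\del\{x\}$; but $y\in C\s\cont A\u\{x\}$ and $y\in C\cont B\u\{x\}$, and $A\i B=\emp$ then forces $y=x$, a contradiction.
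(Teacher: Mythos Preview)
The paper does not include its own proof of this lemma---it is quoted from Whittle~\cite{Whittle} without argument---so there is nothing in the paper to compare against. I will therefore comment only on the soundness of your proposal.

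Your overall plan is correct, and the rank calculations in the second step are accurate (in particular $r_{M/x}(A)+r_{M/x}(B)=r(M/x)+1=r(M)$ since $M/x$ is connected, giving $\lambda_M(A\cup x)=1$ or $2$ exactly as you state). The gap is in the consolidation step. You correctly flag that after moving $F\cap B$ into $A$ one must keep $r_{M/x}(B\setminus F)\ge 2$, and you propose to maximise $|A\cap F|$ over all admissible vertical $2$-separations. But maximality alone does \emph{not} force $F\subseteq A$. The obstruction is the case where $B$ consists of exactly two parallel classes $[p]\subseteq F$ and $[q]\not\subseteq F$: moving $[p]$ over drops $r_{M/x}(B\setminus[p])$ to $1$, so maximality is satisfied while $F\cap B=[p]\ne\emptyset$. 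In that situation your final orthogonality step breaks as written, because a circuit $C\subseteq B\cup\{x\}$ through $x$ may pass through $[p]\subseteq C^{*}-x$, and then $|C\cap C^{*}|\ge 2$ is no contradiction.

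The hole can be filled. In the residual case one checks $r_{M/x}(A)=r(M/x)-1$ and $\cl_{M/x}(A)=A\cup[p]$, so $[q]=E(M/x)\setminus(A\cup[p])$ is a cocircuit of $M/x$ and hence of $M$. Since $M$ is $3$-connected with $|E(M)|\ge 4$, this forces $|[q]|\ge 3$; any two elements of $[q]$ then form a triangle of $M$ with $x$, and that triangle meets $C^{*}$ only in $x$ (as $[q]\cap F=\emptyset$), giving the orthogonality contradiction after all. So your strategy succeeds, but the sentence ``which is what gives the desired configuration'' must be replaced by this explicit case analysis; as it stands, the argument is incomplete.
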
%revised once

Lemma \ref{w38} implies the two next corollaries:

\begin{corollary}\label{w38-cor}
If $M$ has a triangle $T$ intersecting a triad $T\s$, then, for $x\in T\s-T$ and $y\in T-T\s$, $\si(M/x)$ and $\co(M\del y)$ are $3$-connected.
\end{corollary}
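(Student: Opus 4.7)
The plan is to reduce both assertions to Lemma~\ref{w38}, applied to the triad $T\s$ itself (as a rank-$3$ cocircuit) in the generic case, and to dispatch a degenerate triangle-triad case separately; the second assertion will then follow from the first by matroid duality. First, orthogonality between the circuit $T$ and the cocircuit $T\s$ gives $|T\i T\s|\ne 1$, and together with $y\in T-T\s$ (so $T\ne T\s$) this forces $|T\i T\s|=2$. I write $T=\{a,b,y\}$ and $T\s=\{a,b,x\}$.

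To show $\si(M/x)$ is 3-connected, I split on whether $T\s$ is also a triangle of $M$. If $T\s$ is not a triangle, then $T\s$ is independent, hence a rank-$3$ cocircuit. Since $y\in\cl_M(\{a,b\})\cont\cl_M(T\s)$ and $x\notin T$, I have $T\cont \cl_M(T\s)-x$. I must then verify that $T$ is itself a triangle of $M/x$, i.e., that no $2$-subset of $T$ is a circuit of $M/x$; equivalently, that none of $T\s$, $\{x,a,y\}$, $\{x,b,y\}$ is a triangle of $M$. The first is ruled out by the case hypothesis; for the other two, strong circuit elimination between $\{x,a,y\}$ and $T$ forces $\{x,b,y\}$ to be a triangle, and a further elimination between $\{x,a,y\}$ and $\{x,b,y\}$ (eliminating $y$, keeping $a$) produces a circuit inside $\{x,a,b\}=T\s$, contradicting the case hypothesis. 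Thus $T$ is a triangle of $M/x$ inside $\cl_M(T\s)-x$, and Lemma~\ref{w38} with $C\s=T\s$ yields that $\si(M/x)$ is 3-connected.

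In the degenerate case where $T\s$ is also a triangle, one has $r_M(T\s)=2$, so $\lambda_M(T\s)=2+(r(M)-1)-r(M)=1$; 3-connectivity of $M$ then forces $|E(M)-T\s|\le 1$, and since $y\in E(M)-T\s$ this gives $E(M)=T\s\u\{y\}$ and $M\cong U_{2,4}$. In this case $\si(M/x)$ is the one-element matroid, which is 3-connected by convention. Finally, $\co(M\del y)$ is handled by duality: applying the argument above to $M\s$ (in which $T\s$ is a triangle and $T$ a triad meeting in $\{a,b\}$, with $y\in T-T\s$ playing the role of ``$x$'') yields that $\si(M\s/y)$ is 3-connected, and since $\si(M\s/y)=(\co(M\del y))\s$ and 3-connectivity is self-dual, $\co(M\del y)$ is 3-connected. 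I expect the main obstacle to be the circuit-elimination bookkeeping needed to rule out the spurious triangles $\{x,a,y\}$ and $\{x,b,y\}$, as this is what legitimises the direct appeal to Lemma~\ref{w38} in the generic case.
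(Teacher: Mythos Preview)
Your proof is correct and follows the paper's intended route: the paper simply records that Lemma~\ref{w38} implies this corollary, and your argument supplies exactly that reduction (plus duality for the $\co(M\del y)$ half). Your extra care in verifying that $T$ survives as a triangle of $M/x$ and in dispatching the $U_{2,4}$ degeneracy is sound; note that ordinary circuit elimination together with simplicity already suffices where you invoke the strong form.
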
%revised once

\begin{corollary}\label{4-circuit}
Suppose that $C\s$ is a rank-$3$ cocircuit of $M$, then each element of $C\s$ in a $4$-circuit of $M|\cl_M(C\s)$ is vertically $N$-contractible in $M$.
\end{corollary}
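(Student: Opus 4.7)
The main tool is Lemma~\ref{w38}, which yields 3-connectivity of $\si(M/x)$ from the existence of a triangle of $M/x$ inside $\cl_M(C\s)-x$. Let $x\in C\s$ and let $C$ be a 4-circuit of $M|\cl_M(C\s)$ containing $x$. Since $|C|=4$ and $C$ is a circuit of $M$ through $x$, the set $C-x$ is a 3-element circuit, i.e.\ a triangle, of $M/x$; because $C\cont \cl_M(C\s)$, this triangle lies in $\cl_M(C\s)-x$. Thus the hypothesis of Lemma~\ref{w38} is satisfied and $\si(M/x)$ is 3-connected.

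To upgrade ``vertically contractible'' to ``vertically $N$-contractible'', I would invoke the $(M,N)$-vertbarrier framework introduced just before the corollary. If $(C\s,p)$ is an $(M,N)$-vertbarrier, then by definition $\{z,p\}$ is vertically $N$-contractible for some $z\in C\s$; by Lemma~\ref{w37}(b), the same conclusion then holds for every $x\in C\s$, so $\si(M/x/p)$ is 3-connected with an $N$-minor. In particular $M/x$ has $N$ as a minor, and since $N$ is simple, so does $\si(M/x)$.

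Combining the two parts, $\si(M/x)$ is a 3-connected matroid with an $N$-minor, hence $x$ is vertically $N$-contractible in $M$. The main obstacle is the $N$-minor part: the 3-connectivity follows immediately from Lemma~\ref{w38}, but propagating the $N$-minor through the contraction of $x$ needs the vertbarrier hypothesis that is implicit in the surrounding development (without which Lemma~\ref{w38} would, by itself, deliver only that $\si(M/x)$ is 3-connected).
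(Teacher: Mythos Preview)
Your argument is correct and matches the paper's intended route: the paper simply records that Lemma~\ref{w38} implies Corollary~\ref{4-circuit}, and your first paragraph spells out exactly that implication (the $4$-circuit through $x$ becomes a triangle of $M/x$ inside $\cl_M(C\s)-x$, so Lemma~\ref{w38} applies).

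You are also right to flag the $N$-minor issue. As written, the corollary's hypothesis is merely that $C\s$ is a rank-$3$ cocircuit, from which one cannot in general conclude that $M/x$ has an $N$-minor; Lemma~\ref{w38} alone only gives vertical contractibility. The paper is tacitly relying on the surrounding vertbarrier context: every time the corollary is invoked with a nontrivial $N$ (namely in Corollary~\ref{connected paradise}), the cocircuit in play is the $C\s$ of an $(M,N)$-vertbarrier, so Lemma~\ref{w37}(b) supplies the $N$-minor of $M/x$ exactly as you describe. When the corollary is used without that extra hypothesis (in Proposition~\ref{ando}), only the plain ``vertically contractible'' conclusion is drawn. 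So your reading of the implicit hypothesis is the correct one, and your derivation of the $N$-minor via Lemma~\ref{w37}(b) is precisely what the paper is leaving unsaid.
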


\begin{lemma}\label{carambola potencial}
If, for $n\ge 3$, $L:=\{y_1,\dots,y_n\}$ is a line of $M$ and $x_1,\dots,x_n$ are elements of $M$ such that, for each $i\in[n]$, $C\s_i:=(L-y_i)\u x_i$ is a cocircuit of $M$, then $K:=x_1,\dots,x_n,y_1,\dots,y_n$ is a carambole of $M$. Moreover, if, for some distinct $i$ and $j$ in $[n]$, $M/x_i,y_i$ or $M/y_i,y_j$ has an $N$-minor, then $K$ is an $N$-carambole of $M$.
\end{lemma}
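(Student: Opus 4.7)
The plan is to verify the three defining properties of an $N$-carambole for $K=x_1,\dots,x_n,y_1,\dots,y_n$: that the $y_i$'s are $n$ distinct points of a line (given), that each $C\s_i$ is a cocircuit of $M$ (given), and that $L$ is vertically (resp.\ vertically $N$-) contractible, i.e., $\si(M/L)$ is $3$-connected (and has an $N$-minor). Only the last condition requires work. The case $n=3$ reduces directly to Lemma~\ref{triweb potencial}, so I focus on $n\ge 4$, which forces $r(M)\ge 4$.

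First, I will run verbatim the cocircuit-elimination argument from the proof of Proposition~\ref{1st-prop}, using only the hypotheses (the line $L$ with cocircuits $C\s_i$) and Lemma~\ref{cocircuit-union}, to conclude $|X|=n$, $r\s_M(X)=2$ (every $3$-subset of $X$ is a triad), and that each $C\s_i$ has rank $3$ in $M$ (since $L-y_i$ has rank $2$ and $x_i\notin\cl_M(L)=L$). A further application of Lemma~\ref{cocircuit-union} to the distinct cocircuits $C\s_i$ and $C\s_j$ gives $r_M(L\u\{x_i,x_j\})=r_M(C\s_i\u C\s_j)\ge 4$, which implies $x_i\notin\cl_M(L\u x_j)$ whenever $i\ne j$---this structural estimate will be key in the main step.

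Next, I apply Lemma~\ref{w38} to the rank-$3$ cocircuit $C\s_i$ at the element $x_i$: since $\cl_M(C\s_i)-x_i\supseteq L$ contains a triangle of $M/x_i$ (any $3$-subset of $L$ is a triangle of $M$ not involving $x_i$), $\si(M/x_i)$ is $3$-connected. Then Lemma~\ref{w37}(a) applied to $C\s_i$ with $p=y_i\in\cl_M(C\s_i)-C\s_i$ yields $\si(M/y_i,y_j)\cong\si(M/y_i,x_i)$ for any $y_j\in L-y_i\cont C\s_i$. Since $\{y_i,y_j\}$ spans $L$, $M/y_i,y_j=M/L$, so it suffices to prove $\si((M/x_i)/y_i)$ is $3$-connected, equivalently that $y_i$ is vertically contractible inside the $3$-connected matroid $\si(M/x_i)$. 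This last step is the main obstacle: my plan is a second application of Lemma~\ref{w38} inside $\si(M/x_i)$, using the cocircuit $C\s_j$ (for some $j\ne i$), which avoids $x_i$ and hence persists as a cocircuit in $M/x_i$ containing $y_i$. The structural estimate $x_i\notin\cl_M(L\u x_j)$ ensures its image has rank $3$ in $\si(M/x_i)$, and a triangle of $(M/x_i)/y_i$ in its closure is to be produced from images of $4$-circuits of $M$ inside $\cl_M(C\s_j)$ passing through $y_i$, tracked carefully through the parallel-class identifications performed by $\si$.

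Finally, for the \emph{moreover} assertion, the isomorphism $\si(M/L)\cong\si(M/x_i,y_i)$ above transports $N$-minors. If $M/x_i,y_i$ has an $N$-minor, then so does $\si(M/L)$; if $M/y_i,y_j$ has an $N$-minor, then $\si(M/L)=\si(M/y_i,y_j)$ has one directly. Combined with the $3$-connectivity established above, $L$ is vertically $N$-contractible, so $K$ is an $N$-carambole.
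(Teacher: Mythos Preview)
Your overall strategy---reducing to $\si(M/x_i,y_i)\cong\si(M/L)$ via Lemma~\ref{w37} and then trying to establish $3$-connectivity by two applications of Lemma~\ref{w38}---diverges from the paper's argument, which instead deletes $Y:=L-\{y_i,y_j,y_k\}$ (this preserves $3$-connectivity since each deleted element lies on a line of the current matroid with at least four points) and applies Lemma~\ref{triweb potencial} to the resulting triangle-and-triads configuration in $M\del Y$, obtaining that $M\del Y/\{y_i,y_j,y_k\}=M/L$ is $3$-connected in one stroke. The ``moreover'' clause is handled the same way in both.

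Your first application of Lemma~\ref{w38} (to $C\s_i$ at $x_i$) is fine, and the reduction via Lemma~\ref{w37} is correct. The genuine gap is in the second application. You propose to apply Lemma~\ref{w38} inside $M':=\si(M/x_i)$ to the cocircuit $C\s_j$ at $y_i$, producing the required triangle of $M'/y_i$ in $\cl_{M'}(C\s_j)-y_i$ from ``$4$-circuits of $M$ inside $\cl_M(C\s_j)$ through $y_i$''. But $\cl_M(C\s_j)=\cl_M(L\cup x_j)$ is a rank-$3$ flat in which $x_j$ is a coloop of the restriction $M|(L\cup x_j)$ (since $x_j\notin L=\cl_M(L)$): the only circuits there are the $3$-subsets of $L$, so no $4$-circuit through $y_i$ exists unless $\cl_M(C\s_j)$ happens to contain extra elements---and nothing in the hypotheses guarantees that. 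Passing to $M'$ does not help: $\cl_{M'}(C\s_j)$ is the image of $\cl_M(L\cup\{x_i,x_j\})-x_i$, and when $L\cup\{x_i,x_j\}$ is already a flat of $M$ (a generic situation permitted by the hypotheses), this closure is exactly $L\cup x_j$. In $M'/y_i$ the elements of $L-y_i$ collapse to a single parallel class, and your own estimate $x_i\notin\cl_M(L\cup x_j)$ forces $\{x_i,y_i,y_k,x_j\}$ to be independent in $M$, so $x_j$ is not parallel to any $y_k$ in $M'/y_i$ either. Thus $\cl_{M'}(C\s_j)-y_i$ meets only two parallel classes of $M'/y_i$ and contains no $3$-element circuit; the hypothesis of Lemma~\ref{w38} is simply not available. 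The paper's deletion-to-a-triweb reduction sidesteps this obstruction entirely.
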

\begin{proof}
First we prove the Lemma for $n=3$. Since $y_3\in T-T\s_3$, then, by Corollary \ref{w38-cor}, $\co(M\del y_3)$  is $3$-connected. By orthogonality, $T\s_1$, $T\s_2$ and $T\s_3$ are the unique triads of $M$ meeting $T$. Hence, $\{x_1,y_2\}$ and $\{x_2,y_1\}$ are the unique serial pairs of $M\del y_3$. So, $M/T$ is $3$-connected because $M/T=M\del y_3/y_1,y_2 \cong \co(M\del y_3)$. Thus, $W$ is a carambole of $M$. Now, say that $M/x_1,y_1$ has an $N$-minor. By Lemma \ref{w37}, $\si(M/x_1,y_1)\cong\si(M/y_1,y_2)=\si(M/T)$. So, $M/T$ has an $N$-minor and the lemma holds for $n=3$.

Now, let us consider $n\ge 4$. Let  $1\le i<j<k\le n$ and $Y:=L-\{y_i,y_j,y_k\}$. So, $M\del Y$ is $3$-connected. By the lemma for $n=3$, $W:=x_i,x_j,x_k,y_i,y_j,y_k$ is a carambole of $M\del Y$. Thus, $M\del Y/\{y_i,y_j,y_k\}=M/L$ is $3$-connected. The first part of the lemma is proved. Now, say that $M/x_i,y_i$ has an $N$-minor. By Lemma \ref{w37}, $\si(M/x_i,y_i)\cong\si(M/y_i,y_j)=\si(M/L)$. So, $M/L$ has an $N$-minor and the lemma holds.
\end{proof}

The next lemma has an elementary proof, which is left to the reader.

\begin{lemma}\label{r3}
If $H$ is a rank-$3$ simple matroid with $|E(H)|\ge 4$, then one of the following alternatives holds:
\begin{enumerate}
 \item [(a)] $H$ is the direct sum of a nontrivial line and a coloop.
 \item [(b)] $H$ is connected and has a $4$-circuit.
\end{enumerate}
\end{lemma}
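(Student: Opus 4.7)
The plan is to split on whether $H$ is connected.

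If $H$ is disconnected, note that $H$ has no loops (being simple), so every component has rank at least $1$; as $r(H)=3$, the multiset of component ranks is one of $(3)$, $(2,1)$, or $(1,1,1)$. The first makes $H$ connected, ruled out by hypothesis; the last forces $|E(H)|=3$, contradicting $|E(H)|\ge 4$. Hence the component ranks are $(2,1)$, so $H=L\oplus\{v\}$ with $L$ a simple rank-$2$ matroid and $v$ a coloop; since $|E(H)|\ge 4$ gives $|E(L)|\ge 3$, the line $L$ is nontrivial and (a) holds.

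If instead $H$ is connected, pick any two distinct elements $a,b\in E(H)$ and apply Lemma \ref{rank3}. The first alternative there already yields a $4$-circuit of $H$, so (b) holds. The second alternative writes $H=P_p(L_1,L_2)$ as a parallel connection of two lines at some base point $p\in\{a,b\}$. Both $L_1$ and $L_2$ must be nontrivial, for if some $L_i$ had only two elements then its non-base element would be a coloop of $H$, contradicting connectedness. Choosing $x_i,y_i\in L_i-p$ for $i=1,2$, the triangles $\{p,x_i,y_i\}$ of $L_i$ combine via the standard circuit description of parallel connections to produce the $4$-circuit $(\{p,x_1,y_1\}\cup\{p,x_2,y_2\})-p=\{x_1,y_1,x_2,y_2\}$ of $H$, again giving (b). The only delicate step is verifying that the ``parallel connection'' output of Lemma \ref{rank3} must involve nontrivial lines when $H$ is connected; everything else is a direct case analysis on matroid components and parallel-connection circuits.
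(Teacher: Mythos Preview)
Your proof is correct. The paper itself omits the proof entirely, writing only ``The next lemma has an elementary proof, which is left to the reader,'' so there is no explicit argument to compare against.

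Your disconnected case is exactly the intended elementary analysis of component ranks. In the connected case you take a slight shortcut by invoking Lemma~\ref{rank3}; this is legitimate since that lemma appears earlier in the paper and its proof is self-contained. Your observation that the parallel-connection alternative of Lemma~\ref{rank3} forces both lines to be nontrivial (else the non-base element of a two-point line would be a coloop of $H$, contradicting connectedness) is the only point requiring care, and you handle it correctly; the resulting $4$-circuit $\{x_1,y_1,x_2,y_2\}$ then follows from the standard circuit description of parallel connections. One could also argue the connected case directly without Lemma~\ref{rank3} (e.g.\ by taking a basis, adjoining a fourth element, and chasing triangles via circuit elimination), but your route is shorter given the tools already available.
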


Motivated by Lemma \ref{r3}, for an $(M,N)$-vertbarrier $(C\s,p)$, if $H:=M|(C\s \u p)$ is disconnected, we define the coloop and the line of $H$ respectively as the \defin{coloop} and \defin{line} of $(C\s,p)$ and we say that $(C\s,p)$ is \defin{disconnected}. Otherwise $(C\s,p)$ is said to be \defin{connected}. Combining Lemma \ref{rank3} and Corollary \ref{4-circuit}, we have:

\begin{corollary}\label{connected paradise}
Suppose that $(C\s,p)$ is a connected $(M,N)$-vertbarrier. Then
\begin{enumerate}
\item [(a)] each element of $C\s$ is vertically $N$-contractible in $M$ or
\item [(b)] $M|(C\s\u p)$ is the parallel connection of two nontrivial lines with base point, namely, $b$ and each element of $C\s-b$ is vertically $N$-contractible in $M$.
\end{enumerate}\end{corollary}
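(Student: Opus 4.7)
The plan is to apply Lemma \ref{rank3} to the restriction $H:=M|(C\s\u p)$, which is a connected rank-$3$ simple matroid by the hypothesis that $(C\s,p)$ is a connected vertbarrier. Since $p\in\cl_M(C\s)$, the matroid $H$ is a restriction of $M|\cl_M(C\s)$, so every $4$-circuit of $H$ is a $4$-circuit of $M|\cl_M(C\s)$; Corollary \ref{4-circuit} then converts any such $4$-circuit meeting $C\s$ in $x$ into vertical $N$-contractibility of $x$ in $M$. The entire argument reduces to locating, for each relevant element of $C\s$, a $4$-circuit of $H$ that contains it.

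I would then split into two cases. If $H$ is not the parallel connection of two nontrivial lines, then applying Lemma \ref{rank3} to the pair $(x,p)$ for each $x\in C\s$ forces a $4$-circuit of $H$ containing $x$ (the alternative conclusion of the lemma being excluded), so every element of $C\s$ is vertically $N$-contractible and (a) holds. Otherwise, $H$ is the parallel connection of two nontrivial lines $L_1,L_2$ along some basepoint $b\in C\s\u\{p\}$, and I would sub-split on whether $b=p$ or $b\in C\s$.

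If $b=p$, then $L_1-p$ and $L_2-p$ partition $C\s$ and each has at least two elements, since each $L_i$ has at least three. For any $x\in L_1-p$, pick $y\in L_1-\{p,x\}$ and any two distinct $u,v\in L_2-p$; eliminating $p$ from the triangles $\{x,y,p\}\cont L_1$ and $\{u,v,p\}\cont L_2$ via the circuit structure of the parallel connection produces the $4$-circuit $\{x,y,u,v\}$ of $H$ through $x$. Symmetrically for $x\in L_2-p$, so (a) again holds. If $b\in C\s$, an analogous triangle exchange through $b$ (picking in each $L_i$ a triangle of the form $\{x_i,b,\cdot\}$ and eliminating $b$) produces a $4$-circuit of $H$ containing any prescribed $x\in C\s-b$, yielding (b).

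The main subtlety is that in the last subcase $b$ itself may lie in no $4$-circuit of $H$ (for instance when $|L_1|=|L_2|=3$), so Corollary \ref{4-circuit} cannot be invoked for $b$; this is precisely why (b) only claims vertical $N$-contractibility for the elements of $C\s-b$. Beyond this case bookkeeping, the proof is a routine structural analysis once $H$ has been described via Lemma \ref{rank3}.
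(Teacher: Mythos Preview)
Your proof is correct and follows exactly the approach the paper indicates: it combines Lemma~\ref{rank3} with Corollary~\ref{4-circuit}, and you have simply spelled out the case analysis that the paper leaves implicit. The only point worth noting is that the $N$-minor part of ``vertically $N$-contractible'' is guaranteed by the vertbarrier hypothesis (via Lemma~\ref{w37}, every $x\in C\s$ satisfies that $M/x$ has an $N$-minor), which is tacitly absorbed into the paper's use of Corollary~\ref{4-circuit}.
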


\begin{lemma}\label{conf impl carambole}
If $r(M)\ge 4$ and $(C\s,p)$ is a disconnected $(M,N)$-vertbarrier with line $L$, then $L$ is an $N$-filament of $M$ or $L$ contains a vertically $N$-contractible element of $M$.
\end{lemma}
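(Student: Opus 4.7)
The plan is to assume that no element of $L$ is vertically $N$-contractible and deduce that $L$ is an $N$-filament. Writing $H := M|(C\s \u p) = L \oplus \{x\}$ with $x$ the coloop, the condition $r_M(C\s) = 3$ rules out $p = x$ (which would give $r(C\s) = r(L) = 2$); hence $p \in L$, $C\s = (L - p) \u \{x\}$, and $x \notin \cl_M(L)$.

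Step one is to show $L$ is a flat of $M$: if $w \in \cl_M(L) - L$ existed, then for any $y \in L - p$ (such a $y$ exists since $|L| \ge 3$) the triple $\{p, y, w\}$ would be a triangle of $M$, yet $C\s \i \{p, y, w\} = \{y\}$, violating orthogonality. Hence $\cl_M(L) = L$. Next, for each $y_i \in L$ Lemma~\ref{desconexa} produces an $(M,N)$-vertbarrier $(D_i\s, y_i)$ with $L - y_i \cont D_i\s$, taking $(C\s, p)$ itself when $y_i = p$.

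The crucial step is proving each $(D_i\s, y_i)$ is disconnected. Otherwise, Corollary~\ref{connected paradise} gives two cases: either every element of $D_i\s$ is vertically $N$-contractible, which immediately contradicts the hypothesis via the $\ge 2$ elements of $L - y_i \cont D_i\s$; or $M|(D_i\s \u y_i)$ is a parallel connection $P$ of two nontrivial lines $L_1, L_2$ at a base $b$ with $D_i\s - b$ vertically $N$-contractible. This parallel-connection subcase is the main obstacle. I would argue that for $a \in L_1 - b$ and $c \in L_2 - b$, no circuit of $P$ (the 3-subsets of $L_1$, the 3-subsets of $L_2$, or the 4-circuits of the form $T_1 \u T_2 - b$) fits inside any three-element set $\{a, c, d\}$; hence $\cl_P(\{a, c\}) = \{a, c\}$. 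Consequently the rank-$2$ set $L$ cannot straddle $L_1$ and $L_2$, so $L \cont L_j$ for some $j$, and then $L - \{y_i, b\}$ still contains a vertically $N$-contractible element of $L$, contradicting the standing hypothesis.

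With disconnectedness secured, write $M|(D_i\s \u y_i) = L_i' \oplus \{z_i\}$. Since $M$ is simple and $|L| \ge 3$, the set $L$ cannot contain the coloop $z_i$, so $L \cont L_i'$; as both are rank-$2$ in $M$ and $L$ is a flat, $\cl_M(L_i') = \cl_M(L) = L$ forces $L_i' = L$. Thus $D_i\s = (L - y_i) \u \{z_i\}$, and setting $x_i := z_i$ gives cocircuits $(L - y_i) \u \{x_i\}$ for each $i$. Since $x_i \in D_i\s$ and $(D_i\s, y_i)$ is an $(M,N)$-vertbarrier, $M/x_i, y_i$ has an $N$-minor. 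Lemma~\ref{carambola potencial} then assembles $x_1, \dots, x_n, y_1, \dots, y_n$ into an $N$-carambole with filament $L$, as required.
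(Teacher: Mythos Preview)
Your proof is correct and follows essentially the same route as the paper: assume $L$ has no vertically $N$-contractible element, invoke Lemma~\ref{desconexa} to produce a vertbarrier $(D_i^*,y_i)$ for each $y_i\in L$, use Corollary~\ref{connected paradise} to force each of these to be disconnected, argue that the line of each one coincides with $L$, and assemble the carambole. The only notable difference is tactical: you first prove directly that $L$ is a flat of $M$ via orthogonality with the single cocircuit $C^*$, and then use this to force $L_i'=L$; the paper instead argues that any triangle meeting $L$ but not contained in $L$ would lie in $\cl_M(C_i^*)$ and, via Lemma~\ref{w38}, would make elements of $L-T$ vertically $N$-contractible. Your flat argument is a bit cleaner, and your treatment of case~(b) of Corollary~\ref{connected paradise} (showing $L$ cannot straddle the two lines of the parallel connection) spells out a detail the paper leaves implicit.
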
%proved

\begin{proof}
Suppose that $L$ contains no vertically $N$-contractible elements of $M$. Let $C\s_1:=C\s$, $y_1:=p$, $L:=\{y_1,\dots, y_n\}$ and $x_1\in C\s_1-L$. 

Let $i\in\{2,\dots,n\}$ with $i\neq j$. By Lemma \ref{w37}, $M/L=\si(M/y_i,y_1)$ is $3$-connected with an $N$-minor. But $\si(M/y_i)$ is not vertically $3$-connected, and, by Lemma \ref{w36}, there is an $(M,N)$-vertbarrier $(C\s_i,y_i)$, containing $y_1$. By orthogonality $L-y_i\cont C\s_i$.

If $(C\s_i,y_i)$ is connected, then, by Corollary \ref{connected paradise} on $C\s_i$, $L$ contains a vertically $N$-contractible element of $M$, a contradiction. Therefore, each $(C\s_i,y_i)$ is a disconnected $(M,N)$-vertbarrier with coloop, namely, $x_i$.

If $L$ intersects a triangle $T\ncont L$, hence, $T\cont \cl_M(C\s_i)$ by orthogonality and, by Lemma \ref{w38}, the elements of $L-T$ are vertically $N$-contractible in $M$. A contradiction to our assumptions. Hence, there is no such triangle. Thus, the line of each $(C\s_i,y_i)$ must be $L$. Now, the result follows from Lemma \ref{carambola potencial}.
\end{proof}

We say that a biweb $x_1,x_2,y_1,y_2,y_3$ is \defin{strict} if there is no element $x_3$ of $M$ such that $x_1,x_2,x_3,y_1,y_2,y_3$ is a carambole of $M$.

\begin{corollary}\label{contractible-b3}
If $r(M)\ge 4$ and $x_1,x_2,y_1,y_2,y_3$ is a strict $N$-biweb of $M$, then $y_3$ is vertically $N$-contractible in $M$.
\end{corollary}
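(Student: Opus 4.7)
My plan is proof by contradiction: assume $y_3$ is not vertically $N$-contractible and derive a structural impossibility. To start, $\si(M/y_3)$ already has an $N$-minor, because $M/T=(M/y_3)/y_1\del y_2$ exhibits $M/T$ as a minor of $M/y_3$, and the vertical $N$-contractibility of $T$ gives $M/T$ an $N$-minor; so the assumption forces $\si(M/y_3)$ to fail $3$-connectivity. Now $\si(M/\{y_1,y_3\})\cong\si(M/T)$ is $3$-connected with an $N$-minor, so $\{y_1,y_3\}$ is vertically $N$-contractible while $y_3$ is not; Lemma \ref{w36} then produces an $(M,N)$-vertbarrier $(C\s,y_3)$ with $y_1\in C\s$. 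Orthogonality of the cocircuit $C\s$ with the circuit $T$ forces $y_2\in C\s$, and strictness (no triad contains $\{y_1,y_2\}$) rules out $|C\s|=3$, so $|C\s|\ge 4$.

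I then split on the connectivity type of the vertbarrier. If $(C\s,y_3)$ is disconnected, write $H:=M|(C\s\u y_3)=M|L\oplus M|\{c\}$ with $L$ the line and $c$ the coloop. Every circuit of $H$ lies in $L$, so $T\cont L$, forcing $L-y_3\cont C\s=(L-y_3)\u\{c\}$ and $|L|\ge 4$. The cocircuits of $H$ are exactly the $(|L|-1)$-subsets of $L$ together with $\{c\}$, and each $M$-cocircuit's intersection with $E(H)$ must contain one; applied to the triad $\{x_1,y_2,y_3\}$ this forces $x_1\in C\s$, because otherwise the intersection $\{y_2,y_3\}$ is too small to contain a cocircuit of $H$. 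Similarly $x_2\in C\s$. Whether both of $x_1,x_2$ lie in $L-y_3$ or exactly one equals $c$, at least one biweb triad $\{x_i,y_j,y_3\}$ becomes a $3$-subset of the rank-$2$ line $L$, hence both a triangle and a triad. But the biweb supplies five distinct elements, so any triangle-triad yields a $2$-separation of $M$ (one side has rank $2$, the other has rank $r(M)-1$, giving $\lambda=1$), contradicting $3$-connectivity.

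If $(C\s,y_3)$ is connected, Corollary \ref{connected paradise} says either every element of $C\s$ is vertically $N$-contractible or $M|(C\s\u y_3)$ is a parallel connection of two nontrivial lines at a base point $b$ with $C\s-b$ vertically $N$-contractible. Checking each possible location of $b$, at least one of $y_1,y_2$ is vertically $N$-contractible; say $y_1$, so $\si(M/y_1)$ is $3$-connected. However $\{x_1,y_2,y_3\}$ is a cocircuit of $M/y_1$ (it avoids $y_1$), and $T$ makes $\{y_2,y_3\}$ a parallel pair in $M/y_1$. Choosing $y_2$ as the representative, the triad's residue is $\{x_1,y_2\}$, which becomes a $2$-cocircuit of $\si(M/y_1)$ provided $x_1$ is not simplified away; but $x_1$ being parallel to something in $M/y_1$ would require a triangle $\{x_1,y_1,a\}$ of $M$, and such a triangle would meet the triad $\{x_1,y_2,y_3\}$ in the single element $x_1$, violating orthogonality. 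Hence $\{x_1,y_2\}$ is a series pair in the allegedly $3$-connected matroid $\si(M/y_1)$, a contradiction. The main delicate step is the disconnected case: several subcases must be tracked to ensure that, whichever way $x_1$ and $x_2$ sit in $C\s$, one of the biweb triads is forced into the rank-$2$ line $L$, delivering the triangle-triad obstruction.
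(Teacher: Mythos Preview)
Your argument is essentially correct and reaches the goal, but it takes a much longer route than the paper and contains one small gap. In the connected case you assert that a triangle $\{x_1,y_1,a\}$ of $M$ meets the triad $\{x_1,y_2,y_3\}$ in the single element $x_1$; this fails when $a\in\{y_2,y_3\}$. The patch is immediate: if $a\in\{y_2,y_3\}$ then $x_1\in\cl_M(\{y_1,a\})=\cl_M(T)$, so the triad $\{x_1,y_2,y_3\}$ lies in a rank-$2$ line and is simultaneously a triangle, yielding the same triangle--triad $2$-separation you exploit in the disconnected case. With that fix your connected case goes through; in fact what you have really shown there is the structural fact (independent of any hypothesis on $y_3$) that $y_1$ and $y_2$ are \emph{never} vertically contractible once the biweb is present, which makes the conclusion of Corollary~\ref{connected paradise} instantly contradictory.

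The paper's proof is direct and avoids both the contradiction hypothesis and the vertbarrier machinery entirely. It works on the deletion side: since $y_1\in T\setminus T\s_1$, Corollary~\ref{w38-cor} gives $\co(M\del y_1)$ $3$-connected. Strictness guarantees that $T\s_2=\{x_2,y_1,y_3\}$ is the \emph{only} triad of $M$ through $y_1$ (any other would contain $\{y_1,y_2\}$ by orthogonality with $T$, producing a triweb), so $\{x_2,y_3\}$ is the unique serial pair of $M\del y_1$ and hence $\co(M\del y_1)\cong M/y_3\del y_1$. Since $y_1$ is parallel to $y_2$ in $M/y_3$, one gets $\si(M/y_3)\cong M/y_3\del y_1$, which is $3$-connected. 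Your vertbarrier approach recovers the same conclusion but at the cost of a case split, a careful analysis of cocircuits of $H=M|(C\s\cup y_3)$, and several orthogonality chases; the paper's passage through $\co(M\del y_1)$ sidesteps all of that in three lines.
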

\begin{proof}
Since the biweb is strict, then $\{x_2,y_3\}$ is the unique serial pair of $M\del y_1$, thus, by Corollary \ref{w38-cor}, $\co(M\del y_1)\cong M/y_3\del y_1$ is $3$-connected. But $y_1$ is in parallel in $M/y_3$. So, $\si(M/y_3)\cong\si(M/y_3\del y_1)$ and the result holds.
\end{proof}

In \cite{Ando} is proved that in a $3$-connected graph $G$, each pair of non vertically contractible elements of $M(G)$ incident to a same degree-$3$ vertex is in a triangle and that each degree-$3$ vertex is incident to a vertically contractible element of $M(G)$. Next we generalize this result. This may also be seem as a variation of Tutte's Triangle Lemma.

\begin{proposition}\label{ando}
Suppose that $r(M)\ge 3$ and $C\s$ is a rank-$3$ cocircuit of $M$. If $C\s$ contains two non-vertically contractible elements of $M$, then all non-vertically contractible elements of $M$ in $C\s$ are in a non-trivial line of $M$. Moreover, $C\s$ contains a vertically contractible element of $M$.
\end{proposition}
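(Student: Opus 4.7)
Let $x,y\in C\s$ be the two non-vertically contractible elements and set $H:=M|\cl_M(C\s)$, a simple rank-$3$ matroid. My plan is to split on $|E(H)|$: the case $|E(H)|\ge 4$ will yield both conclusions via Lemma~\ref{r3}, and the boundary case $|E(H)|=3$ will be ruled out by contradiction.

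Assume first $|E(H)|\ge 4$, so Lemma~\ref{r3} applies. In the ``connected with a $4$-circuit'' alternative I will apply Lemma~\ref{rank3} to the pair $\{x,y\}$: either some $4$-circuit of $H$ contains both of them, so Corollary~\ref{4-circuit} forces $x$ to be vertically contractible, a contradiction; or $H=P_c(L_1,L_2)$ with base $c\in\{x,y\}$, say $c=x$. In the latter, the connectedness of $H$ forces both $L_1,L_2$ to be nontrivial, so taking $y$, a second point on $y$'s line, and two points of the other line produces a $4$-circuit of $H$ through $y$, and Corollary~\ref{4-circuit} again contradicts the choice of $y$. Hence $H$ must be the direct sum of a nontrivial line $L'$ of $H$ and a coloop $b$ of $H$. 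A rank comparison ($r_M(L')=2<3=r_M(C\s)$) will show $b\in C\s$, and $L:=\cl_M(L')$ is a nontrivial line of $M$ containing $C\s\setminus\{b\}$, delivering the first conclusion. For the ``moreover'' part I plan to verify $b\notin\cl_M(L')$ (else $r_M(E(H))=2$), deduce that no two elements of $L'$ are parallel in $M/b$, and conclude by Lemma~\ref{w38} that $\si(M/b)$ is $3$-connected---so $b\in C\s$ is the required vertically contractible element.

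Next I will rule out $|E(H)|=3$. Here $C\s=\cl_M(C\s)=\{x,y,z\}$ is an independent triad and a flat of $M$, so no triangle of $M$ contains two elements of $C\s$. I will first show no triangle of $M$ meets $C\s$ at all: any triangle $T$ with $|T\cap C\s|=1$ would, by Corollary~\ref{w38-cor} applied to $T$ and the triad $C\s$, place both elements of $C\s\setminus T$ among the vertically contractible elements, but at least one of $x,y$ lies in $C\s\setminus T$, a contradiction. With no triangle meeting $C\s$, neither $\{x,y\}$ nor $\{x,z\}$ is in a triangle of $M$, so the dual form of Tutte's Triangle Lemma (Oxley~\cite{Oxley}) applied to the triad $C\s$ at $x$ gives that $M/x$ is $3$-connected; hence $x$ is vertically contractible, contradicting the choice of $x$. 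This rules out $|E(H)|=3$ and completes the plan.

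The principal obstacle I foresee is the parallel-connection branch in the case $|E(H)|\ge 4$, where the explicit construction of a $4$-circuit of $H$ through $y$ depends on both lines of the parallel connection being nontrivial (which must be extracted from $H$'s connectedness); the case $|E(H)|=3$, though structurally special, is dispatched cleanly by combining Corollary~\ref{w38-cor} with the dual Tutte Triangle Lemma.
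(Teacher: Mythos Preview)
Your proof is correct and uses the same key tools as the paper's---Lemma~\ref{w38} via Corollary~\ref{4-circuit} for the non-triad case and the dual Tutte Triangle Lemma for the triad case---though you organize it as a direct case split on $|\cl_M(C\s)|$, whereas the paper runs a single contradiction argument built around the claim that $C\s$ meets no nontrivial line of $M$. One small gap: you assume throughout that two non-vertically-contractible elements $x,y$ exist, but the ``Moreover'' clause is meant to hold unconditionally (the paper later invokes it without checking the hypothesis, e.g.\ in Lemma~\ref{rank5}); you should add the remark that when $C\s$ has at most one non-vertically-contractible element, the clause follows immediately from $|C\s|\ge 3$.
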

\begin{proof}\stepcounter{rotcount}
Suppose the contrary. First we prove:
\begin{rot}\label{ando-1}
$C\s$ meets no nontrivial line of $M$.
\end{rot}
\begin{rotproof}
Suppose that $L$ is a non-trivial line of $M$ meeting $C\s$. By Lemma \ref{w38}, all elements of $C\s-L$ are vertically contractible in $M$. But this implies the proposition. So, \ref{ando-1} holds.
\end{rotproof}

If $C\s$ is a triad, then we may assume that $C\s$ contains a pair of non vertically contractible elements of $M$. But, in this case, the dual of Tutte's triangle Lemma contradicts \ref{ando-1}. 

Thus, we may assume that $|C\s|\ge 4$ and, therefore, $C\s$ contains a circuit of $M$. By \ref{ando-1}, $C\s$ contains a $4$-circuit of $M$, and, by Lemma \ref{r3}, $M|C\s$ is a simple rank-$3$ connected matroid. By \ref{ando-1} each element of $C\s$ is in a $4$-circuit of $M|C\s$ and, by Corollary \ref{4-circuit}, all elements of $C\s$ are vertically contractible, a contradiction. So, the proposition holds.
\end{proof}

\section{Lemmas for the proofs}\label{section-lemmas}
In this section we establish Lemmas towards the proof of the main results. We will keep the notations set in the beginning of Section \ref{fancy}.

\begin{lemma}\label{cunningham}(Cunningham~\cite[Proposition 3.2]{Cunningham})
If $y$ is an element of a matroid $H$ other than a coloop and $H\del y$ is vertically $3$-connected, then so is $H$.
\end{lemma}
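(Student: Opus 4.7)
The plan is to argue by contradiction. Suppose $H$ has a vertical $2$-separation $(A,B)$, and arrange so that $y\in B$. I will show that, regardless of whether $y$ lies in $\cl_H(B-y)$ or not, we either transport the separation to $H\del y$ (contradicting the hypothesis) or reach a contradiction with $(A,B)$ being a vertical $2$-separation.

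The setup I will use is the identity $r(H\del y)=r(H)$, which holds because $y$ is not a coloop; this is the only place where the coloop hypothesis enters. From this, the connectivity function of $H\del y$ restricted to $A$ is easy to compute in terms of the one of $H$. There are exactly two cases, according as $r_H(B-y)$ equals $r_H(B)$ or $r_H(B)-1$.

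In the first case (i.e.\ $y\in\cl_H(B-y)$), a direct calculation gives $\lambda_{H\del y}(A)=\lambda_H(A)\le 1$. Since also $r_{H\del y}(A)=r_H(A)\ge 2$ and $r_{H\del y}(B-y)=r_H(B)\ge 2$, the pair $(A,B-y)$ is a vertical $2$-separation of $H\del y$, contradicting its vertical $3$-connectivity.

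In the second case, $\lambda_{H\del y}(A)=\lambda_H(A)-1\le 0$. Since $H\del y$ is vertically $3$-connected it is in particular connected, so $\lambda_{H\del y}(A)\ge 1$ whenever both $A$ and $B-y$ are nonempty. Therefore $B-y=\emp$, i.e.\ $B=\{y\}$; but then $r_H(B)\le 1$, contradicting the fact that $(A,B)$ is a vertical $2$-separation of $H$. The only subtlety worth flagging is this last step, where vertical $3$-connectivity of $H\del y$ is invoked merely through the ordinary connectivity it implies; everything else is a one-line rank computation.
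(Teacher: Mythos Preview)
The paper does not prove this lemma; it is simply quoted from Cunningham~[Proposition~3.2] with no argument given. So there is nothing in the paper to compare against, and your direct proof is the natural one. It is essentially correct, but two small points deserve attention.

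First, to show that $H$ is vertically $3$-connected you must also rule out vertical $1$-separations, not only vertical $2$-separations; you only treat the latter. The missing case is easy: if $(A,B)$ is a vertical $1$-separation of $H$ with $y\in B$, then $\lambda_H(A)=0$ means $H=(H|A)\oplus(H|B)$, so $y$ not being a coloop of $H$ forces $r_H(B-y)=r_H(B)\ge 1$, and then $(A,B-y)$ is a vertical $1$-separation of $H\del y$.

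Second, in your Case~2 you assert that vertical $3$-connectivity of $H\del y$ implies connectivity, and conclude $B-y=\emptyset$. Strictly speaking, vertical $3$-connectivity only yields vertical $2$-connectivity (a loop can still be a separator), so from $\lambda_{H\del y}(A)=0$ together with $r_{H\del y}(A)=r_H(A)\ge 2$ you can only deduce $r_{H\del y}(B-y)=0$, not $B-y=\emptyset$. This is harmless: $r_H(B-y)=0$ still gives $r_H(B)\le r_H(B-y)+1=1$, contradicting $r_H(B)\ge 2$, so your argument goes through with this adjustment.
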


\begin{corollary}\label{cunningham-cor}
If $M\del x$ is $3$-connected, then each vertically $N$-contractible element of $M\del x$ is vertically $N$-contractible in $M$.
\end{corollary}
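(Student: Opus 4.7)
The plan is to apply Cunningham's Lemma (Lemma \ref{cunningham}) to $H:=\si(M/y)$ with distinguished element $x$. The $N$-minor requirement is immediate: since $(M\del x)/y=(M/y)\del x$ has an $N$-minor by hypothesis, so does $M/y$, and hence so does $\si(M/y)$ (as $N$ is simple). It remains to show $\si(M/y)$ is $3$-connected.

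Because $M$ is simple and $3$-connected (so it has no $2$-circuits), $x$ is not a loop of $M/y$. I would split into two cases depending on whether $x$ is in parallel with some element $z$ in $M/y$. In the parallel case, $\{x,y,z\}$ is a triangle of $M$, and choosing $z$ as the representative of its parallel class in $\si(M/y)$ gives $\si(M/y)=\si((M/y)\del x)=\si((M\del x)/y)$, which is $3$-connected with an $N$-minor by hypothesis, and we are done immediately.

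In the non-parallel case, $x$ survives simplification, so $\si(M/y)\del x=\si((M\del x)/y)$ is $3$-connected, hence vertically $3$-connected. To invoke Cunningham's Lemma I must show $x$ is not a coloop of $\si(M/y)$, equivalently of $M/y$. Since $M$ is $3$-connected it has no coloops, so $r(M\del x)=r(M)$, and thus $r((M/y)\del x)=r(M\del x)-1=r(M)-1=r(M/y)$, confirming that $x$ is not a coloop of $M/y$. Cunningham's Lemma then yields that $\si(M/y)$ is vertically $3$-connected.

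Finally, since $\si(M/y)$ is simple, vertical $3$-connectivity upgrades to $3$-connectivity: any $2$-separation $(A,B)$ with $|A|,|B|\ge 2$ would force $r(A),r(B)\ge 2$ (a simple matroid has no loops or parallel elements), and so would already be a vertical $2$-separation, contradiction. Hence $\si(M/y)$ is $3$-connected with an $N$-minor, i.e. $y$ is vertically $N$-contractible in $M$. The only mildly delicate point is consistently matching parallel-class representatives in the parallel case so that $\si(M/y)$ and $\si((M/y)\del x)$ coincide; this is pure bookkeeping, and the real content sits in the rank computation together with the appeal to Lemma \ref{cunningham}.
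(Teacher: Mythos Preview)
Your proof is correct and follows the intended route, namely deriving the corollary directly from Lemma~\ref{cunningham}; the paper states it without proof for this reason. One small simplification: apply Lemma~\ref{cunningham} to $H:=M/y$ (rather than to $\si(M/y)$), since $(M/y)\del x=(M\del x)/y$ is vertically $3$-connected by hypothesis and $x$ is not a coloop of $M/y$; this yields that $M/y$ is vertically $3$-connected, i.e.\ $\si(M/y)$ is $3$-connected, and the parallel/non-parallel case split becomes unnecessary.
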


\begin{lemma}\label{deletion-carambole}
Suppose that $r(M)\ge 4$, $M\del x$ is $3$-connected and $L$ is an $N$-filament of $M\del x$. Then $\cl_M(L)$ is an $N$-filament of $M$ or $\cl_M(L)$ contains a vertically $N$-contractible element of $M$.
\end{lemma}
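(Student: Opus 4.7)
The strategy is to promote the $N$-carambole of $M\del x$ to one of $M$ when possible, and to fall back on Corollary \ref{cunningham-cor} otherwise. Fix an $N$-carambole $x_1,\dots,x_n,y_1,\dots,y_n$ of $M\del x$ with filament $L$ and cocircuits $C^*_i:=(L-y_i)\cup x_i$. By the standard description of cocircuits under deletion, each $C^*_i$ is either itself a cocircuit of $M$ or else $C^*_i\cup\{x\}$ is a cocircuit of $M$; the split on whether $x\in \cl_M(L)$ controls which case we land in.

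If $x\notin \cl_M(L)$, then $\cl_M(L)=L$ is already a line of $M$. In the subcase where every $C^*_i$ remains a cocircuit of $M$, Lemma \ref{carambola potencial} applies directly: $L$ is a line, the carambole's cocircuits are in place, and $M/\{y_i,y_j\}$ inherits the $N$-minor from $(M\del x)/\{y_i,y_j\}$, so $L=\cl_M(L)$ is an $N$-filament of $M$. If instead $x\in \cl_M(L)$, then $\cl_M(L)=L\cup\{x\}$, and for any distinct $i,j$ the set $\{y_i,y_j,x\}$ is a circuit of $M$; orthogonality of this triangle with $C^*_i$ (whose intersection with it would be the singleton $\{y_j\}$) rules $C^*_i$ out as a cocircuit of $M$ and forces the lift to be $C^*_i\cup\{x\}$. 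These $n$ cocircuits $(\cl_M(L)-y_i)\cup x_i$ are exactly $n$ of the $n+1$ needed for $\cl_M(L)$ to be the filament of an $(n+1)$-carambole; if the missing cocircuit $L\cup\{x_0\}$ is also present in $M$, then Lemma \ref{carambola potencial} applied to $\cl_M(L)$ produces an $N$-carambole of $M$ with filament $\cl_M(L)$, the $N$-minor hypothesis being supplied by $\si(M/\cl_M(L))=\si((M\del x)/L)$ (since $x$ is a loop of $M/\cl_M(L)$).

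In all remaining subcases the first alternative is unavailable and I appeal to the easy second alternative. Since $L$ is a rank-$2$ flat of $M\del x$, contracting any $y_i\in L$ makes the other elements of $L$ parallel, whence $\si((M\del x)/y_i)\cong\si((M\del x)/L)$ is $3$-connected with an $N$-minor. Thus each $y_i$ is vertically $N$-contractible in $M\del x$, and Corollary \ref{cunningham-cor} transfers this property to $M$; since $L\cont \cl_M(L)$, we find the required vertically $N$-contractible element in $\cl_M(L)$.

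The main obstacle I expect is the case $x\in \cl_M(L)$ when the extra cocircuit $L\cup\{x_0\}$ is actually present: one must check that the $n+1$ cocircuits and the $n+1$ points of $\cl_M(L)$ genuinely assemble into a well-defined $N$-carambole (pairwise distinctness of the hull elements $x_0,x_1,\dots,x_n$, correct transport of the $N$-minor). The other steps are orthogonality and bookkeeping, and since the fallback via Corollary \ref{cunningham-cor} always succeeds, the lemma itself is never in real jeopardy, only the stronger first alternative.
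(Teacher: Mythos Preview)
Your fallback is broken, and since the fallback covers all the cases in which your direct promotion of the carambole fails, the proof as a whole does not go through. The error is the claimed isomorphism $\si((M\del x)/y_i)\cong\si((M\del x)/L)$. Contracting a single element $y_i$ of a rank-$2$ line $L$ makes the remaining points of $L$ pairwise parallel, but simplification then keeps one of them; it does not delete the whole line. So $\si((M\del x)/y_i)$ has rank $r(M\del x)-1$, while $\si((M\del x)/L)$ has rank $r(M\del x)-2$; these are different matroids whenever $r(M\del x)\ge 3$. Consequently you have no argument that the individual $y_i$ are vertically $N$-contractible in $M\del x$, and in fact they generally are not: the very existence of filaments as a separate structure in the main theorem is precisely because their elements can fail to be vertically contractible (cf.\ the dichotomy in Lemma~\ref{conf impl carambole}, and the example $M^*(K_{3,n}''')$ with only three vertically contractible elements). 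So Corollary~\ref{cunningham-cor} gives you nothing here.

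This is not a technicality but the substance of the lemma: one must show that either the carambole lifts, \emph{or} some element of $\cl_M(L)$ becomes vertically $N$-contractible in $M$ for a structural reason. The paper's proof does this by first establishing, via Lemma~\ref{cunningham}, that $\{y_1,y_2\}$ is vertically $N$-contractible in $M$ (not merely in $M\del x$), and then, if $y_1$ itself is not vertically $N$-contractible in $M$, invoking Lemma~\ref{w36} to produce an $(M,N)$-vertbarrier $(D^*,y_1)$ containing $y_2$; orthogonality forces $\cl_M(L)-y_1\subseteq D^*$, after which Corollary~\ref{connected paradise} or Lemma~\ref{conf impl carambole} finishes. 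Your case analysis on whether the $C^*_i$ lift and whether $x\in\cl_M(L)$ is not wrong as far as it goes, but it leaves exactly the hard residual cases (e.g.\ $x\notin\cl_M(L)$ with some $C^*_i\cup\{x\}$ a cocircuit of $M$, or $x\in\cl_M(L)$ with no cocircuit of the form $L\cup\{x_0\}$) for which you need the vertbarrier machinery, not Corollary~\ref{cunningham-cor}.
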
%revised
\begin{proof}
Since $\si(M\del x/L)\cong \si(M\del x/y_1,y_2)$, then $M\del x/y_1,y_2$ is vertically $3$-connected. Since $x$ is not a coloop of $M/y_1,y_2$. Hence, by Lemma \ref{cunningham}, $M/y_1,y_2$ is a vertically $3$-connected matroid. If $\si(M/y_1)$ is $3$-connected, then the result holds. Assume the contrary. By Lemma \ref{w36}, there is an $(M,N)$-vertbarrier $(D\s,y_1)$ containing $y_2$.

Since $y_2\in D\s$, by orthogonality, $\cl_M(L)-y_1\cont D\s$. If $(D\s,y_1)$ is connected, then Corollary \ref{connected paradise} implies that there is a vertically $N$-contractible element of $M$ in $\cl_M(L)$ and the lemma holds. If $(D\s,y_1)$ is disconnected, then $\cl_M(L)$ is the line of $(D\s,y_1)$ because $\cl_M(L)-y_1\cont D\s$ by orthogonality. The result follows from Lemma \ref{conf impl carambole} in this case.
\end{proof}

\begin{lemma}\label{line}
Suppose that $M/x\ncong U_{2,4}$, $M/x$ is $3$-connected and $L$ is a rank-$2$ set in $M/x$ with at least $4$ elements. Then each $(|L|-1)-$ subset of $L$ contains a deletable or a vertically contractible element of $M$. In particular, $L$ contains a deletable element of $M$ or a pair of vertically contractible elements of $M$. 
\end{lemma}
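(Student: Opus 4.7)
The plan is to argue by contradiction: suppose $L$ contains no deletable element of $M$ and at most one vertically contractible element of $M$. Set $Y\cont L$ to be the set of elements of $L$ which are neither deletable nor vertically contractible in $M$, so that $|Y|\ge|L|-1\ge 3$. For each $y\in Y$, Bixby's Lemma gives that $\si(M/y)$ or $\co(M\del y)$ is $3$-connected; since $y$ is not vertically contractible, $\co(M\del y)$ is $3$-connected, and since $y$ is not deletable, $M\del y$ must have a series pair, placing $y$ in a triad $T\s_y$ of $M$. By Corollary \ref{w38-cor}, every triangle of $M$ meeting $T\s_y$ must then contain $y$---otherwise $\si(M/y)$ would be $3$-connected, contradicting $y\in Y$. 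I will call this the \emph{no-avoidance property} of $y$ and use it throughout the case analysis.

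I would then split on $r_M(L)$. If $r_M(L)=2$, then $L$ is a line of $M$ and every $3$-subset of $L$ is a triangle; orthogonality of these triangles with cocircuits of $M$ forces $|C\s\cap L|\in\{0,\,|L|-1,\,|L|\}$ for every cocircuit $C\s$. When $|L|\ge 5$ no $3$-element cocircuit can meet $L$, so $Y=\emp$, a contradiction. When $|L|=4$, each triad $T\s_y$ with $y\in Y$ lies entirely in $L$; but then the triangle $T:=L\del y$ meets $T\s_y$ in $T\s_y\del y$ and avoids $y$, violating the no-avoidance property.

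If $r_M(L)=3$, then $x\in\cl_M(L)$, and $F:=\cl_M(L)$ is a rank-$3$ flat containing $L\cup\{x\}$. Since $M/x$ is simple, no triangle of $M$ contains $x$; a rank calculation then yields that for every $3$-subset $S\cont L$, either $S$ is a triangle of $M$ or $S\cup\{x\}$ is a $4$-circuit of $M$. Orthogonality of each $T\s_y$ with these triangles and $4$-circuits restricts $T\s_y$ to a short list of possible forms---contained in $L$ as a triangle-triad, contained in $L\cup\{x\}$ with $x\in T\s_y$, or of the form $\{y,x,a\}$ with $a\notin L\cup\{x\}$. In each configuration I would locate a triangle of $M$ meeting $T\s_y$ but missing $y$, contradicting the no-avoidance property. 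The hypothesis $M/x\ncong U_{2,4}$ is used to ensure either $|L|\ge 5$ or the existence of some element of $E(M/x)$ off the line $\cl_{M/x}(L)$, and that extra element is what allows the offending triangle to be manufactured.

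The hardest part is the case $r_M(L)=3$: the shape of $L$ inside the rank-$3$ flat $F$ (in particular whether $L$ is a $4$-circuit of $M$ or contains a triangle of $M$) and the shape of the triads $T\s_y$ interact intricately, and one must produce, in every combination, a triangle of $M$ meeting $T\s_y$ but avoiding $y$. The hypothesis $M/x\ncong U_{2,4}$ is what closes the last gap in this construction, since the excluded case is precisely where neither the line nor the ambient matroid offers a free element to witness the needed triangle.
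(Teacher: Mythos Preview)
Your setup via Bixby's Lemma, producing a triad $T^*_y$ for every $y\in Y$, coincides with the paper's first step. After that the approaches diverge, and your route has a genuine gap.

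The paper does not split on $r_M(L)$. Instead it observes directly that every $T^*_y$ must contain $x$: if $x\notin T^*_y$ then $T^*_y$ is a triad of $M/x$ meeting the $\ge 4$-point line $L$, which (by orthogonality with the triangles in $L$) forces $T^*_y\cont L$, making $T^*_y$ a triangle-triad of the $3$-connected matroid $M/x$ and hence $M/x\cong U_{2,4}$. Once $x\in T^*_y$ for all $y\in Y$, the paper shows $T^*_{y}\cap T^*_{y'}=\{x\}$ for distinct $y,y'\in Y$ (otherwise cocircuit elimination through $x$ again yields a triad of $M/x$ meeting $L$), so each $T^*_{y}\Delta T^*_{y'}$ is a four-element cocircuit of $M/x$. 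Orthogonality of these cocircuits with the line $L$ in $M/x$ forces $n=4$ and places $y_4$ in two of the triads, contradicting pairwise intersection $\{x\}$.

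Your $r_M(L)=2$ case is fine (indeed vacuous, since a triangle-triad in $L$ already contradicts $|E(M)|\ge 5$). The problem is your $r_M(L)=3$ plan. Your own orthogonality analysis shows that, apart from the impossible triangle-triad case, $T^*_y=\{y,x,a_y\}$. To invoke the no-avoidance property you must exhibit a triangle of $M$ through $x$ or $a_y$ that misses $y$. But $M/x$ is simple, so $x$ lies in no triangle of $M$; hence you need a triangle through $a_y$. When $a_y\notin L$ there is no reason whatsoever for $a_y$ to lie in any triangle of $M$, and the ``extra element off the line'' supplied by $M/x\ncong U_{2,4}$ does not manufacture one. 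Even when $a_y\in L$, if $r_M(L)=3$ the restriction $M|L$ can be a single $4$-circuit with no triangles at all, so again no triangle through $a_y$ avoiding $y$ is available. Your plan thus stalls precisely at the configuration that actually occurs. The fix is to abandon the triangle-hunting and, having established $x\in T^*_y$, argue with cocircuits of $M/x$ as the paper does.
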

\begin{proof}
Suppose the contrary. Let $L=\{y_1,\dots,y_n\}$. We may assume that $y_1,\dots,y_{n-1}$ are not deletable nor vertically contractible. For $i\in[n-1]$, $\co(M\del y_i)$ is $3$-connected since $\si(M/y_i)$ is not. But $M\del y_i$ is not $3$-connected, thus, there is a triad $T\s_i$ of $M$ containing $y_i$. Since $T\s_i$ meets $L$, $|L|\ge 4$ and $M/x$ is $3$-connected and not isomorphic to $U_{2,4}$, hence $T\s_i$ is not a triad of $M/x$. So, $x\in T\s_i$. 

Suppose first that for some $1\le i<j\le n-1$, $T\s_i\i T\s_j\neq \{x\}$, then $2\le |(T\s_i\u T\s_j)-x|\le 3$. If $|(T\s_i\u T\s_j)-x|=3$ , then $(T\s_i\u T\s_j)-x$ is a triad of $M/x$ meeting $L$, a contradiction. If $|(T\s_i\u T\s_j)-x|=2$, then $T_i\s=T_j\s=\{x,y_i,y_j\}$. If for some $k\in [n]-\{i,j\}$, $C:=\{x,y_i,y_j,y_k\}$ is a circuit of $M$, then by Lemma \ref{4-circuit} on $T\s_i$ and $C$, $y_i$ and $y_j$ are vertically contractible and the Lemma holds. So, $\{y_i,y_j,y_k\}$ is a triangle for each $k\in [n]$ and $L$ is a line of $M$, which implies that the elements of $L$ are deletable and also implies the Lemma.

So, we may assume that $T\s_i\i T\s_j=\{x\}$ if $1\le i<j\le n-1$. This implies that $T\s_i\Delta T\s_j$ is a cocircuit of $M$ and, therefore, of $M/x$. By orthogonality, $|L-(T\s_i\Delta T\s_j)|\le 1$. Since, $y_k\notin T\s_i\Delta T\s_j$ for $k\in [n-1]-\{i,j\}$, then $n=4$ and $y_4\in T\s_i\Delta T\s_j$ for each $1\le i<j<n-1=3$. So, $y_4$ is in two of $T\s_1$, $T\s_2$ and $T\s_3$. But, $T\s_i\i T\s_j=\{x\}$ if $1\le i<j\le n-1$. A contradiction.
\end{proof}

\begin{lemma}\label{upcont}
Suppose that $r(M)\ge4$, $M/x$ is $3$-connected and $M$ has no $N$-deletable elements. If $L$ is an $N$-filament of $M/x$, then $L$ is an $N$-filament of $M$ or $L$ contains a pair of vertically $N$-contractible elements of $M$.
\end{lemma}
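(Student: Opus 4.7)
The plan is to split on $r_M(L)\in\{2,3\}$---the only possibilities since $r_{M/x}(L)=2$. The cocircuits $C_i^*:=(L-y_i)\cup x_i$ of $M/x$ avoid $x$ and therefore remain cocircuits of $M$; and $\si((M/x)/L)$ being $3$-connected with an $N$-minor gives $M/(L\cup x)$ an $N$-minor.

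If $r_M(L)=2$, then $L$ is a line of $M$ with $n$ distinct elements and the $C_i^*$ are cocircuits of $M$. Lemma \ref{carambola potencial} exhibits $K:=x_1,\dots,x_n,y_1,\dots,y_n$ as a carambole of $M$. Since $M/y_1y_2$ has $M/\{x,y_1,y_2\}$ as a minor with an $N$-minor, $K$ is an $N$-carambole and $L$ is an $N$-filament of $M$.

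If $r_M(L)=3$, then $x\in\cl_M(L)$, so $\cl_M(L)$ is a rank-$3$ flat and $x$ is a loop of $M/L$. Consequently $M\setminus x/L=M/(L\cup x)$ has an $N$-minor, so $M\setminus x$ does too; the hypothesis that no element of $M$ is $N$-deletable then forces $M\setminus x$ to fail to be $3$-connected. I would then suppose for contradiction that at most one $y_i$ is vertically $N$-contractible in $M$. The rank-$3$ observation that contracting any pair $\{y_i,y_j\}$ collapses $\cl_M(L)-y_i-y_j$ to a single parallel class in $M/y_iy_j$ (with $x$ being a non-loop, since by orthogonality with $C_j^*$ no triangle $\{x,y_i,y_j\}$ exists) allows me to locate a vertically $N$-contractible pair $\{y_i,y_j\}$; applying Lemma \ref{w36} with $y_i$ not vertically $N$-contractible produces an $(M,N)$-vertbarrier $(D^*,y_i)$ containing $y_j$. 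Orthogonality of $D^*$ with the cocircuits $C_k^*$ together with the rank-$3$ constraint on $D^*$ and the containment $y_i\in\cl_M(D^*)$ forces $D^*\subseteq\cl_M(L)$. Then Corollary \ref{connected paradise} (connected case) or Lemma \ref{conf impl carambole} (disconnected case) extracts a vertically $N$-contractible element of $M$ inside $\cl_M(L)$; the no-$N$-deletable hypothesis, combined with the non-$3$-connectedness of $M\setminus x$, ensures this element lies in $L$. Symmetry then supplies the second vertically $N$-contractible $y_i$, contradicting the supposition.

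The hardest step will be the vertbarrier analysis in the rank-$3$ case: establishing $D^*\subseteq\cl_M(L)$ rigorously from orthogonality, handling the connected vs.\ disconnected sub-cases, and correctly deploying the no-$N$-deletable hypothesis (via the non-$3$-connectedness of $M\setminus x$) so that the extracted vertically $N$-contractible element actually lies in $L$ rather than at $x$.
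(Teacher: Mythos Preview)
Your $r_M(L)=2$ case is correct and is essentially the paper's endgame (Lemma~\ref{carambola potencial} plus the $N$-minor check). The $r_M(L)=3$ case, however, has two genuine gaps.

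First, the step ``allows me to locate a vertically $N$-contractible pair $\{y_i,y_j\}$'' is not justified. Knowing that $\cl_M(L)-\{y_i,y_j\}$ collapses to a single parallel class in $M/y_iy_j$ tells you nothing about the vertical $3$-connectivity of $\si(M/y_iy_j)$. What you do know is that $\si(M/(L\cup x))$ is $3$-connected, but that matroid has rank $r(M)-3$, one less than $r(M/y_iy_j)$, so there is no direct transfer. Without this pair you cannot invoke Lemma~\ref{w36} to produce the vertbarrier $(D^*,y_i)$.

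Second, ``orthogonality of $D^*$ with the cocircuits $C_k^*$'' is not a valid tool: orthogonality constrains circuits against cocircuits, not cocircuits against cocircuits. With $r_M(L)=3$ and $|L|$ possibly equal to $3$, there need be no circuit of $M$ inside $L$ at all, so you have nothing to play $D^*$ against. The containment $D^*\subseteq\cl_M(L)$ is therefore unsupported, and the rest of the argument (extracting an element via Corollary~\ref{connected paradise} or Lemma~\ref{conf impl carambole}, then arguing it lands in $L$ rather than at $x$) never gets off the ground.

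The paper avoids the $r_M(L)=3$ case entirely. It first invokes Lemma~\ref{line} (this is where the no-$N$-deletable hypothesis is spent) to reduce to $|L|=3$, so $C_3^*=\{y_1,y_2,x_3\}$ is a triad of $M$. Assuming $y_1,y_2$ are not vertically $N$-contractible, Proposition~\ref{ando} applied to this rank-$3$ cocircuit of $M$ forces $y_1,y_2$ into a common triangle $T$ of $M$; since $M/x$ is simple, $x\notin T$, so $T$ is a triangle of $M/x$, and Corollary~\ref{triangle-carambole} gives $T=L$. Thus $r_M(L)=2$ after all, and Lemma~\ref{triweb potencial} finishes. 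The key lemma you are missing is Proposition~\ref{ando}, which converts two non-vertically-contractible elements of a rank-$3$ cocircuit into a triangle of $M$.
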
%???? is this rank optimum??

\begin{proof}\stepcounter{rotcount}
Suppose that the result does not hold. By Lemma \ref{line}, $|L|=3$. So, $L$ a filament of $M/x$. We may assume that $y_1$ and $y_2$ are not vertically contractible in $M$. By Proposition \ref{ando} on $C\s_3$, $y_1$ and $y_2$ are in a triangle $T$ of $M$. As $M/x$ is $3$-connected, hence $x\notin T$ and $T$ is a triangle of $M/x$. By Corollary \ref{triangle-carambole}, $T=L$. The result follows from Lemma \ref{carambola potencial}.
\end{proof}

In some cases, it is easier to prove that an element $p$ is spanned by vertically $N$-contractible elements instead of proving that $p$ is vertically $N$-contractible itself. In this case, some exchanges to get a desired vertically $N$-contractible element may be applied (we will do it further, in Lemma \ref{lifter}). We say that an element $p\in E(M)$ is \defin{$N$-replaceable} in $M$ if $p$ is spanned by a set of vertically $N$-contractible elements of $M$. For $S\cont E(M)$, we say that $p$ is \defin{$(S,N)$-replaceable} if there is a set $I$ of vertically $N$-contractible elements of $M$ such that $p\in \cl_M(S\u I)-\cl_M(S)$. 

%\begin{corollary}\label{4-circuit-replaceable}If $(C\s,p)$ is a connected $(M,N)$-vertbarrier, then each element of $C\s\u p$ is $N$-replaceable in $M$.\end{corollary}

\begin{lemma}\label{no deletable}
Suppose that $M\ncong U_{2,4}$ has no $N$-deletable elements. If $\si(M/x,p)$ is $3$-connected with an $N$-minor, then $p$ is $(\{x\},N)$-replaceable in $M$ or $M$ has an $N$-biweb $x,x_2,p,p_2,p_3$.
\end{lemma}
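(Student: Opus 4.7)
My plan is to analyze the vertbarrier supplied by Lemma \ref{w36}. If $p$ is itself vertically $N$-contractible, then $I=\{p\}$ trivially witnesses $(\{x\},N)$-replaceability (using that $M$ is simple forces $p\notin\cl_M(\{x\})=\{x\}$), so I may assume $p$ is not VNC. Then $\{x,p\}$ being VNC while $p$ is not lets me invoke Lemma \ref{w36}, producing an $(M,N)$-vertbarrier $(C^*,p)$ containing $x$ and forcing $r(M)\ge 4$. The proof then splits on whether $(C^*,p)$ is connected or disconnected.

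In the connected case, Corollary \ref{connected paradise} gives that every element of $C^*$ is VNC, possibly with one exception $b$ arising as the base point of a parallel connection $M|(C^*\cup\{p\})=L_1\oplus_{PC}L_2$. Since $p\in\cl_M(C^*)$ sits in this rank-$3$ flat, one can choose a subset $I$ of VNC elements inside $C^*-\{x\}$ (drawing from the two lines as needed, avoiding $b$) such that $\{x\}\cup I$ spans $p$, yielding replaceability; when $b\in C^*-\{x\}$ the bookkeeping uses that both $L_1-\{b\}$ and $L_2-\{b\}$ are entirely VNC and that each line has at least three points.

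In the disconnected case, write $C^*\cup\{p\}=L\sqcup\{c\}$ with $L$ a nontrivial line through $p$ and $c$ the coloop; by Corollary \ref{coloop} the element $c$ is VNC, and by Lemma \ref{conf impl carambole} either $L$ is an $N$-filament or $L$ has a VNC element. Suppose first $L$ is an $N$-filament, with $N$-carambole $x_1,\dots,x_n,y_1,\dots,y_n$ where $c=x_1$ and $p=y_1$; the hull elements $x_2,\dots,x_n$ are $N$-contractible by Corollary \ref{filament-seed}, and Proposition \ref{prov}(a.1) applied to a lift of a circuit of $M/L$ inside $X$ produces $X\cup\{p,y_j\}\in\mathcal{C}(M)$ for some $j\ge 2$, so $p\in\cl_M(X\cup\{y_j\})$. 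When $x\in L-\{p\}$ (say $x=y_j$), the set $I=X$ witnesses $(\{x\},N)$-replaceability since $\cl_M(\{x\}\cup I)=\cl_M(X\cup\{y_j\})\ni p$. When $x=c$ and $|L|=3$, the carambole is a triweb and it supplies the required $N$-biweb $x,x_2,p,y_2,y_3$: the triangle $L=\{p,y_2,y_3\}$ is VNC because $\si(M/L)=M/L$ is $3$-connected with an $N$-minor, and the two triads are the carambole cocircuits $C^*_1=\{x,y_2,y_3\}$ and $C^*_2=\{x_2,p,y_3\}$. If $L$ is not a filament but contains a VNC element $z\in L-\{p\}$, then $x\in L-\{z,p\}$ gives replaceability via $I=\{z\}$ (since $\cl_M(\{x,z\})=L\ni p$), while the residual cases $x=z$ or $x=c$ combine $c$, $z$, and a secondary vertbarrier $(D^*,w)$ produced by Lemma \ref{desconexa} on a non-VNC $w\in L-\{p\}$: if $(D^*,w)$ is connected, Corollary \ref{connected paradise} supplies extra VNC elements to complete the replacement; if $(D^*,w)$ is disconnected, its coloop plays the role of $x_2$ in an $N$-biweb whose second triad is $\{x_2,p,z\}$.

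The principal obstacle is the disconnected-vertbarrier branch with $x=c$, where the biweb is the only option: one must certify that in every residual configuration, the secondary application of Lemma \ref{desconexa} produces a disconnected vertbarrier whose line is again $L$ and whose coloop is distinct from $c$, giving the second triad needed for the biweb. The analysis also has to juggle the threshold $|L|=3$ (where triads arise directly from the carambole cocircuits) versus $|L|>3$ (where replaceability must be secured through the hull $X$ and the circuit $X\cup\{p,y_j\}$, possibly aided by the ``no $N$-deletable elements'' hypothesis to rule out degenerate configurations in which no VNC element of $L$ is available).
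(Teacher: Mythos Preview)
Your proposal heads in the right general direction (invoke Lemma~\ref{w36}, then split on connected/disconnected), but it is both more complicated than necessary and --- as you yourself flag --- incomplete in the disconnected branch. Two observations from the paper's argument collapse almost all of your case analysis.

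\textbf{First, strengthen the initial reduction.} Rather than only assuming $p$ is not vertically $N$-contractible, assume for contradiction that $p$ is not even $N$-\emph{replaceable} (spanned by a set of vertically $N$-contractible elements). This is legitimate because $M$ simple gives $p\notin\cl_M(\{x\})$, so $N$-replaceable immediately implies $(\{x\},N)$-replaceable. With this stronger standing assumption, the connected case dies in one line: Corollary~\ref{connected paradise} shows $p\in\cl_M(C^*)$ is spanned by vertically $N$-contractible elements of $C^*$, contradicting non-replaceability. No bookkeeping about the base point $b$ or about which elements lie in $C^*-\{x\}$ is needed.

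\textbf{Second, in the disconnected case, apply Bixby to $p$ rather than chasing caramboles.} This is the key idea you are missing. From the vertbarrier $(C^*,p)$ with line $L$, Lemma~\ref{w37} gives $\si(M/x,p)\cong\si(M/L)$, so $M\del p$ has an $N$-minor (delete $p$ first, then contract two points of $L-p$). Since $p$ is not vertically $N$-contractible, Bixby's Lemma makes $\co(M\del p)$ $3$-connected; but ``no $N$-deletable elements'' then forces $p$ into a triad $T^*$. Orthogonality of $T^*$ with the circuits inside $L$ instantly gives $|L|=3$ (so the $|L|>3$ case you worry about simply does not occur), hence $C^*$ is a triad as well. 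Writing $L=\{p,p_2,p_3\}$, $C^*=\{x,p_2,p_3\}$, $T^*=\{x_2,p,p_3\}$ exhibits the biweb directly.

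Your route through Lemma~\ref{conf impl carambole}, Proposition~\ref{prov}, and a secondary application of Lemma~\ref{desconexa} is not wrong in spirit, but it never exploits the ``no $N$-deletable'' hypothesis, which is precisely what pins down the second triad and forces $|L|=3$. That is why your $|L|>3$ branch and your ``$x=c$, non-filament'' branch remain open: the missing triad $T^*$ through $p$ is what closes them.
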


\begin{proof}
Suppose the contrary. So, $p$ is not $N$-replaceable in $M$, because, otherwise, since $p\notin\cl_M(\{x\})$, $p$ would be $(\{x\},N)$-replaceable in $M$. In particular $p$ is not vertically contractible. By Lemma \ref{w36}, $r(M)\ge 4$ and there is an $(M,N)$-vertbarrier $(C\s,p)$ containing $x$. If $(C\s,p)$ is connected, then by Lemma \ref{connected paradise} $p$ is spanned by vertically $N$-contractible elements of $M$ and $p$ is $N$-replaceable, a contradiction. So, $(C\s,p)$ is disconnected, with line, say, $L$. By Lemma \ref{w37}, $\si(M/x,p)\cong \si(M/L)$ is $3$-connected with an $N$-minor. Then $M\del p$ has an $N$-minor. As, $\si(M/p)$ is not $3$-connected, then $\co(M\del p)$ is $3$-connected. But $p$ is not $N$-deletable, and, therefore, $p$ is in a triad $T\s$ of $M$. So, $L$ is a triangle because $L$ meets $T\s$. This also implies that $C\s$ is a triad. Since $C\s$ and $T\s$ are distinct triads meeting $L$, those triads are not in a same coline of $M$. By orthogonality, $|C\s\i L|=|T\s\i L|=2$. So, we may write $L:=\{p,p_2,p_3\}$, $C\s:=\{x,p_2,p_3\}$ and $T\s:=\{x_2,p,p_3\}$. As $\si(M/x,p)\cong \si(M/L)$, this proves the lemma.
\end{proof}

Lemma \ref{cocircuit-union} yields:

\begin{corollary}\label{biweb-rank-3}
If $r(M)\ge4$ and $W$ is a biweb of $M$, then $r_M(W)\ge 4$.
\end{corollary}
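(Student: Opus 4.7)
The plan is to use the structural hypothesis of a biweb to exhibit two distinct cocircuits whose union is exactly $W$, and then invoke Lemma \ref{cocircuit-union}. Writing $W = \{x_1, x_2, y_1, y_2, y_3\}$, the definition of biweb hands us the triads $T_1^* := \{x_1, y_2, y_3\}$ and $T_2^* := \{x_2, y_1, y_3\}$, and clearly $T_1^* \cup T_2^* = W$.

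Next I would verify that $T_1^*$ and $T_2^*$ are distinct cocircuits of $M$. This is immediate from the implicit requirement that the five elements $x_1, x_2, y_1, y_2, y_3$ listed in the sequence are pairwise distinct (in particular $y_1 \neq y_2$, $y_1 \neq x_1$, and $y_1 \neq y_3$), so $y_1 \in T_2^* \setminus T_1^*$ witnesses that the two triads differ as sets.

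With this in hand, the conclusion follows by a single application of Lemma \ref{cocircuit-union}: since $M$ is $3$-connected, $r(M) \ge 4$, and $T_1^*, T_2^*$ are distinct cocircuits, we get $r_M(W) = r_M(T_1^* \cup T_2^*) \ge 4$. I do not foresee any real obstacle here; the statement is essentially a packaging of Lemma \ref{cocircuit-union} tailored to the biweb configuration.
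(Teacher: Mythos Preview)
Your proposal is correct and matches the paper's approach exactly: the paper simply states that Lemma~\ref{cocircuit-union} yields the corollary, and your argument spells out precisely this application by exhibiting the two triads of the biweb as distinct cocircuits whose union is $W$.
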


\begin{lemma}\label{biweb-notriangle}
If $r(M)\ge 4$, $M$ has no $N$-deletable elements and $W=x_1,x_2,y_1,y_2,y_3$ is an $N$-biweb of $M$, then $\{y_1,y_2,y_3\}$ is the unique triangle of $M$ intersecting $W$.
\end{lemma}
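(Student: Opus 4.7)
I would argue by contradiction. Let $T := \{y_1,y_2,y_3\}$ and $T_i^* := \{x_i,y_{3-i},y_3\}$ for $i=1,2$. Suppose a triangle $T' \neq T$ of $M$ meets $W$. By orthogonality of $T'$ with each triad $T_i^*$, the intersection $T' \cap T_i^*$ has size in $\{0,2,3\}$; the possibility $T' = T_i^*$ is ruled out because a 3-element set cannot be both a triangle and a triad in a 3-connected matroid with $r(M) \ge 4$. A short case analysis on $|T' \cap T|$ then shows that $T'$ is one of $\{x_1,y_1,y_3\}$, $\{x_2,y_2,y_3\}$, $\{x_1,x_2,y_3\}$, $\{x_2,y_1,w\}$, or $\{x_1,y_2,w\}$, where $w \notin W$ in the last two.

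For each of the first three candidates, a direct rank computation yields $r_M(W) \le 3$: in the first two cases the triangle $T'$ places $x_j \in \cl_M(T)$, so $r_M(W) = r_M(T \cup \{x_1,x_2\}) \le r_M(T) + 1 = 3$; in the third, submodularity on $T$ and $T'$ (which share $y_3$) gives $r_M(W) = r_M(T \cup T') \le 3$. But $W = T_1^* \cup T_2^*$, so Lemma~\ref{cocircuit-union} forces $r_M(W) \ge 4$, a contradiction.

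For the last two candidates, by symmetry take $T' = \{x_2,y_1,w\}$ with $w \notin W$. In $M/T$ the triangle $T'$ descends to the parallel pair $\{x_2,w\}$, whence $\si((M\setminus w)/T) \cong \si(M/T)$ is 3-connected with an $N$-minor, and in particular $M\setminus w$ has an $N$-minor. I would then derive the contradiction by showing $M\setminus w$ is 3-connected, so that $w$ is $N$-deletable. The crucial step is to verify that $w$ lies in no triad of $M$: any hypothetical triad $T_w^*$ must, by orthogonality with $T'$, contain $x_2$ or $y_1$, and by orthogonality with $T$, either contain a second element of $T$ or be disjoint from $T$. Case analysis using cocircuit elimination against $T_1^*$ and $T_2^*$, combined with orthogonality against $T$ and $T'$, rules out all possibilities except $T_w^* = \{w,y_1,y_2\}$; but then $W$ extends to the $N$-triweb $(x_1,x_2,w,y_1,y_2,y_3)$ with filament $T$, and Corollary~\ref{triangle-carambole} applied to this triweb forces every triangle meeting it to lie inside $T$, contradicting $T' \neq T$. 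The main obstacle is the final step, promoting ``$w$ lies in no triad'' to ``$M\setminus w$ is 3-connected'': this requires a 2-separation analysis in $M\setminus w$, which I would lift to a 2-separation of $M$ itself (using the triangle $T'$ and the triads $T_1^*, T_2^*$), contradicting the 3-connectivity of $M$.
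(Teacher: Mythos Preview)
Your case reduction via orthogonality is correct and matches the paper's opening. The three candidates with $T'\subseteq W$ are correctly dispatched by the rank bound from Lemma~\ref{cocircuit-union} (this is Corollary~\ref{biweb-rank-3}). The divergence from the paper is in the remaining case $T'=\{x_2,y_1,w\}$.

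You have the difficulty backwards. What you flag as the ``main obstacle'' is in fact immediate: $T'$ is a triangle meeting the triad $T_2^*=\{x_2,y_1,y_3\}$ with $w\in T'-T_2^*$, so Corollary~\ref{w38-cor} gives $\co(M\backslash w)$ $3$-connected; once you know $w$ lies in no triad, $M\backslash w=\co(M\backslash w)$ is $3$-connected and you are done. No $2$-separation lifting is needed.

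The genuine gap is the step you pass over: the claim that cocircuit elimination and orthogonality alone rule out every triad through $w$ except $\{w,y_1,y_2\}$. The case $T_w^*=\{w,x_2,z\}$ with $z\notin W$ survives all the orthogonality constraints you list (it meets $T'$ in two elements and avoids $T$), and iterated cocircuit elimination against $T_1^*,T_2^*$ produces ever-larger candidate cocircuits without an evident termination. What actually kills this case is the observation that $T_w^*$ is disjoint from $T$, hence a cocircuit of $M/T$, and the parallel pair $\{x_2,w\}$ in $M/T$ forces $r_{M/T}(T_w^*)\le 2$; for $r(M/T)\ge 3$ this violates the vertical $3$-connectivity of $M/T$. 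This is exactly the paper's key step, but the paper reaches it through the maximal-fan framework: once one notes that $x_2,y_1,y_3,y_2,x_1$ is a fan extended by $T'$ to a maximal fan $F$, the hypothesis ``no $N$-deletable element'' forces the extremes of $F$ to lie in triads, producing precisely such a $T^*$ through some $x_i$, and the rank-$2$ cocircuit contradiction follows. The fan viewpoint organises your ad hoc triad cases into a single stroke.
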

\begin{proof}
Suppose for a contradiction that $S$ is a triangle of $M$ intersecting $W$ other than $T:=\{y_1,y_2,y_3\}$ Then, by orthogonality with $T\s_i:=\{x_i,y_{3-i},y_3\}$ for $i=1,2$, we may assume that $x_1\in S$. If $y_3\in S$, then $S\cont W$ by orthogonality with $T\s_2$. But this implies that $r_M(W)\le 3$. A contradiction to Lemma \ref{biweb-rank-3}. So, $y_2\in S$ by orthogonality with $T\s_1$. Hence, $W$ is part of a maximal fan $F$ of $M$ containing $S$. As $M\del y_1$ has an $N$-minor and $y_1$ is an inner spoke of $F$, thus each deletion of a spoke of $F$ in $M$ has an $N$-minor. But $M$ has no $N$-deletable elements, so the extremes of $F$ are triads. Moreover, if $T\s$ is a triad in $F$ intersecting $x_i$ but different from $T\s_i$, then $T\s$ is a rank-$2$ cocircuit of $M/T$, which is vertically $3$-connected with rank at least $3$, a contradiction to the vertical $3$-connectivity of $M/T$. So, $F=W$ and the lemma holds.
\end{proof}

\begin{lemma}\label{contractible-triangle}
Suppose that $r(M)\ge 4$, $W=x_1,x_2,y_1,y_2,y_3$ is an $N$-biweb of $M$ and $M$ has no $N$-deletable elements. Then $M/\{y_1,y_2,y_3\}$ is $3$-connected.
\end{lemma}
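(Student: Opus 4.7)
The plan is to reduce ``$M/T$ is $3$-connected'' (where $T:=\{y_1,y_2,y_3\}$) to ``$M/T$ is simple'': since the definition of $N$-biweb forces $T$ to be a vertically $N$-contractible triangle, $\si(M/T)$ is already $3$-connected, so the simplicity of $M/T$ gives $M/T=\si(M/T)$ and the conclusion follows. I would therefore check separately the absence of loops and of parallel pairs in $M/T$.

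First I would rule out loops of $M/T$. If $e\in E(M)-T$ is a loop of $M/T$, then $e\in\cl_M(T)$, and since $M$ is simple, $T\cup\{e\}$ contains a triangle of the form $\{e,y_i,y_j\}$. Such a triangle meets $W$ and differs from $T$, which contradicts Lemma~\ref{biweb-notriangle}.

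Next I would rule out parallel pairs. Assume $\{a,b\}\subseteq E(M)-T$ is parallel in $M/T$. Since neither $a$ nor $b$ is a loop of $M/T$, we have $a,b\notin\cl_M(T)$, and from $r_{M/T}(\{a,b\})=1$ we get $r_M(T\cup\{a,b\})=3$. Then $\{y_1,y_2,a\}$ is a basis of $M|(T\cup\{a,b\})$, and the fundamental circuit $C_b$ of $b$ with respect to this basis is a subset of $\{a,b,y_1,y_2\}$ containing $b$. Simplicity of $M$ rules out the $2$-element candidates for $C_b$, and Lemma~\ref{biweb-notriangle} rules out every $3$-element candidate (each such triangle would meet $W$ and differ from $T$). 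Hence $C_b=\{a,b,y_1,y_2\}$ is a $4$-circuit of $M$. Applying orthogonality of $C_b$ with the triads $T^*_1:=\{x_1,y_2,y_3\}$ and $T^*_2:=\{x_2,y_1,y_3\}$, and noting that $y_3\notin C_b$, we obtain $x_1\in\{a,b\}$ and $x_2\in\{a,b\}$, so $\{a,b\}=\{x_1,x_2\}$ and $C_b=\{x_1,x_2,y_1,y_2\}$. But then $y_3\in\cl_M(\{y_1,y_2\})\subseteq\cl_M(C_b)$, so $W=\{x_1,x_2,y_1,y_2,y_3\}$ satisfies $r_M(W)=r_M(C_b)=3$, contradicting Corollary~\ref{biweb-rank-3}.

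Therefore $M/T$ has neither loops nor parallel pairs, so it is simple and coincides with $\si(M/T)$, which is $3$-connected. The only genuinely delicate step is the fundamental-circuit analysis in the parallel-pair case; all its alternatives are cleanly eliminated by the simplicity of $M$ together with Lemma~\ref{biweb-notriangle}, and the orthogonality+rank-bound argument then provides the contradiction via Corollary~\ref{biweb-rank-3}.
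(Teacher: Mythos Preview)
Your proof is correct and follows essentially the same approach as the paper: reduce to showing $M/T$ is simple, then use Lemma~\ref{biweb-notriangle} and Corollary~\ref{biweb-rank-3} together with orthogonality against the two triads to eliminate any short circuit through $T$. The only tactical difference is in the endgame for the parallel-pair case: the paper takes an arbitrary $4$-circuit $C$ with $|C\cap T|=2$, uses orthogonality to force $y_3\in C$ and $x_1\in C$, and then performs a circuit elimination to produce a forbidden triangle; you instead fix the basis $\{y_1,y_2,a\}$ so that the fundamental circuit misses $y_3$, and orthogonality then forces $\{a,b\}=\{x_1,x_2\}$, hitting Corollary~\ref{biweb-rank-3} directly without an elimination step.
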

\begin{proof}Write $T:=\{y_1,y_2,y_3\}$. By the definition of biweb, we just have to prove that $M/T$ is simple. Suppose the contrary and let $C$ be a circuit of $M$ such that $1\le C-T\le 2$. By Lemma \ref{biweb-notriangle}, $|C|=4$. Therefore, $|C\i T|=|C-T|=2$. By Corollary \ref{biweb-rank-3}, $C\ncont W$. So, we may assume that $x_2\notin C$. But $C$ meets $\{y_1,y_3\}$ because $|C\i T|=2$. By orthogonality with $\{x_2,y_1,y_3\}$, $C\i T=\{y_1,y_3\}$. So, $y_2\notin C$. By orthogonality with $\{x_1,y_2,y_3\}$, $x_1\in C$.
Let $D$ be a circuit of $M$ contained in $(C\u T)-y_3$. By orthogonality with $\{x_2,y_1,y_3\}$, $y_1\notin D$. As $|D|\ge 3$, hence $D=(T\u C)-\{y_1,y_3\}$ and $|D|=3$. A contradiction to Lemma \ref{biweb-notriangle}.\end{proof}

\begin{lemma}\label{up-fil-biweb}
Suppose that $r(M)\ge 4$, $T$ is the triangle of an $N$-biweb of $M$ and $M$ has no $N$-deletable elements. If $L$ is an $N$-filament of $M/T$, then $L$ is an $N$-filament of $M$ or $L$ contains an $N$-contractible element of $M$.
\end{lemma}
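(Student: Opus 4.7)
I denote the biweb by $x_1^W, x_2^W, y_1^W, y_2^W, y_3^W$, so $T=\{y_1^W, y_2^W, y_3^W\}$ and the triads are $T^*_1=\{x_1^W, y_2^W, y_3^W\}$ and $T^*_2=\{x_2^W, y_1^W, y_3^W\}$; I retain the standard notation $L=\{y_1,\dots,y_n\}$, $X=\{x_1,\dots,x_n\}$, $C^*_i=(L-y_i)\cup x_i$ for the filament's carambole in $M/T$. By Lemma \ref{contractible-triangle}, $M/T$ is $3$-connected, and since the cocircuits of $M/T$ are exactly the cocircuits of $M$ disjoint from $T$, each $C^*_i$ (which avoids $T$) is already a cocircuit of $M$. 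The filament hypothesis also gives that $\si(M/(T\cup L))$ is $3$-connected with an $N$-minor; consequently $M/\{y_i,y_j\}$ has an $N$-minor for any distinct $y_i,y_j\in L$. The proof splits on whether $L$ meets $\{x_1^W,x_2^W\}$.

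Case A: $x^W_\ell\in L$ for some $\ell\in\{1,2\}$. I claim $x^W_\ell$ is $N$-contractible in $M$, yielding the second alternative of the lemma. Since $x^W_\ell\in L$, $M/x^W_\ell$ has $M/(T\cup L)$ as a minor, hence has an $N$-minor. By Lemma \ref{biweb-notriangle} (which uses the hypothesis that $M$ has no $N$-deletable elements), $T$ is the unique triangle of $M$ meeting the biweb; in particular no triangle of $M$ contains $x^W_\ell$, so $M/x^W_\ell$ has no parallel pair, i.e., $\si(M/x^W_\ell)=M/x^W_\ell$. Corollary \ref{w38-cor} applied to the triangle $T$ and the triad $T^*_\ell$ (with $x^W_\ell\in T^*_\ell-T$) then gives that $\si(M/x^W_\ell)$ is $3$-connected, completing the claim.

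Case B: $L\cap\{x_1^W,x_2^W\}=\emptyset$, so $L\cap (T\cup\{x_1^W,x_2^W\})=\emptyset$. I will show $r_M(L)=2$; combined with Lemma \ref{carambola potencial} (whose $N$-minor condition is met as above), this yields that $L$ is an $N$-filament of $M$, the first alternative. Suppose for contradiction some triple $\{y_a,y_b,y_c\}\subseteq L$ is independent in $M$. As a set of three distinct collinear points of $M/T$, this triple is a circuit of $M/T$, and so there is a circuit $D$ of $M$ with $D-T=\{y_a,y_b,y_c\}$ and $D\cap T\ne\emptyset$. The case $D\supseteq T$ is impossible since then $T\subsetneq D$ would be a strictly smaller dependent subset of $D$; hence $|D\cap T|\in\{1,2\}$, i.e., $D$ is a $4$- or $5$-circuit. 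Because $L$ is disjoint from $T\cup\{x_1^W,x_2^W\}$, we have $D\cap T^*_k = D\cap T\cap T^*_k$ for $k=1,2$. Enumerating the six possibilities for $D\cap T\in\{\{y^W_i\},\{y^W_i,y^W_j\}\}$ and intersecting with $T^*_1\cap T=\{y_2^W,y_3^W\}$ and $T^*_2\cap T=\{y_1^W,y_3^W\}$, in every case at least one of $D\cap T^*_1,D\cap T^*_2$ has size exactly $1$, contradicting orthogonality. Lemma \ref{biweb-notriangle} is used in passing to rule out spurious smaller circuits of the form $\{y_a,y_b,y^W_i\}$, which would be triangles meeting the biweb other than $T$.

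The main obstacle is the orthogonality enumeration in Case B: one must verify, for each of the six configurations of $D\cap T$ against $T^*_1,T^*_2$, the existence of a size-$1$ intersection, while invoking Lemma \ref{biweb-notriangle} to ensure $D$ is genuinely minimal.
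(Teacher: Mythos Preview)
Your proof is correct and follows essentially the same approach as the paper's. Both arguments split on whether $\{x_1^W,x_2^W\}$ meets $L$ (equivalently, whether $L$ fails to be a line of $M$), use orthogonality of a circuit in $L\cup T$ against the two biweb triads to force that dichotomy, invoke Corollary~\ref{w38-cor} together with Lemma~\ref{biweb-notriangle} for the $N$-contractibility of $x^W_\ell$, and finish the line case via Lemma~\ref{carambola potencial}; the paper simply compresses your six-case enumeration into a single orthogonality sentence, and your appeal to Lemma~\ref{biweb-notriangle} at the end of Case~B is unnecessary since the standard description of circuits of $M/T$ already guarantees that $D$ is a genuine circuit of $M$ with $D-T=\{y_a,y_b,y_c\}$.
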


\begin{proof} Let $W=x_1,x_2,y_1,y_2,y_3$ be an $N$-biweb of $M$ with $T:=\{y_1,y_2,y_3\}$. If $L$ is not a line of $M$, then $\{L,T\}$ is not free in $M$. So, there is $C\in\C(M|L\u T)$ meeting both $L$ and $T$. By orthogonality with $\{x_1,y_2,y_3\}$ and $\{x_2,y_1,y_3\}$, $\{x_1,x_2\}$ meets $C$ and, therefore, $L$. By Corollary \ref{w38-cor} and Lemma \ref{biweb-notriangle}, $x_1$ and $x_2$ are $N$-contractible in $M$. This implies the Lemma in this case.

Now, assume that $L$ is a line of $M$. Say that $L$ is part of a carambole $K$ of $M/T$. The cocircuits of $K$ in $M/T$ are also cocircuits of $M$. The result follows from Lemma \ref{carambola potencial}.
\end{proof}

\begin{lemma}\label{remendo}If $r(M)\neq 5$ and $T$ is a triangle of an $N$-biweb of $M$, then each vertically $N$-contractible element of $si(M/T)$ is $(T,N)$-replaceable in $M$.
\end{lemma}

\begin{proof}
Let $W=x_1,x_2,y_1,y_2,y_3$ be a biweb of $M$ having $T$ as triangle. Suppose for a contradiction that $p$ is a vertically $N$-contractible element of $\si(M/T)$ which is not $N$-replaceable in $N$. Note that that $p\notin \cl_M(T)$. As $x_1$ and $x_2$ are vertically $N$-contractible by Corollary \ref{w38-cor}, it follows that $p\notin \cl_M(W)$. Now,  $\si(M/T\cup p)$ is $3$-connected but $\si(M/p)$ is not. If $r(M)\le 3$, then $M/p$ is trivially vertically $3$-connected. So, we may assume that $r(M)\ge 4$. By Corollary \ref{biweb-rank-3} and since $p\notin cl_M(W)$, it follows that $r_M(W\u p)=5$ and, therefore, $r(M)\ge 6$, as $r(M)\neq 5$ by assumption. Let $\{A,B\}$ be a vertical $2$-separation of $M/p$ such that $|B\i T|\ge 2$ with $B$ maximal. Note that $A$ is a minimal vertical $2$-separating set of $M$. By Lemma \ref{vertsep} for $Y=\emptyset$, $B$ is closed, and, therefore, $T\cont B$.

Let us check that $r_{M/T\cup p}(B-T)\ge 2$. First suppose that $x_1,x_2\in B$, then $W\cup p\cont B\cup p$ and, therefore $r_{M/p}(B)\ge 4$ and this implies that $r_{M/T\cup p}(B-T)\ge 2$. Now, consider the case that $x_i\in A$ for some $i\in \{1,2\}$. By Lemma \ref{2-cocirc}, for $z=x_i$, it follows that $A$ is a rank-$2$ cocircuit and $B$ is a hyperplane of $M/p$. Thus $r_{M/p}(B)=r(M/p)-1\ge 4$ and, as a consequence, $r_{(M/T\u p)}(B-T)\ge 2$ in all cases.

Now, note that:
\begin{equation}\label{pre rank 5 - eq1}
\begin{array}{rcl}
r_{(M/T\u p)}(B-T)+r_{(M/T\u p)}(A)&=&r_{M/p}(B)-2+r_{M/p}(A)+(r_{(M/T\u p)}(A) - r_{M/p}(A))\\
& \le & r(M/p)-1+(r_{(M/T\u p)}(A) - r_{M/p}(A))\\
& = & r(M/T\u p)+1+(r_{(M/T\u p)}(A) - r_{M/p}(A)).
\end{array}
\end{equation}
As $T$ meets two triads, it follows that $T$ is a flat of $M/p$ disjoint from $A$. Hence, $r_{M/p}(T\u A)\ge 3$ and $r_{(M/T\u p)}(A)\ge 1$. Since $M/(T\u p)$ is vertically connected, the right side of \eqref{pre rank 5 - eq1} is at least $r(M/T\u p)+1$, so $r_{(M/T\u p)}(A)=r_{M/p}(A)\ge 2$. But $r_{(M/T\u p)}(B-T)\ge 2$ and, therefore, \eqref{pre rank 5 - eq1} contradicts the vertical $3$-connectivity of $M/(T\u p)$.
\end{proof}

\begin{lemma}\label{rank5}
If $M$ has no deletable elements, $r(M)=5$ and $M$ has a biweb, then $M$ has a filment with $3$ elements or a $4$-independent set of vertically contractible elements.
\end{lemma}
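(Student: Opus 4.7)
The plan is to assume $M$ has no triweb and derive the existence of a $4$-independent set of vertically contractible elements. Let $W=x_1,x_2,y_1,y_2,y_3$ be the given biweb of $M$, with triangle $T=\{y_1,y_2,y_3\}$ and triads $T\s_i=\{x_i,y_{3-i},y_3\}$ for $i=1,2$. Since $M$ has no triweb, $W$ is strict, so Corollary \ref{contractible-b3} gives that $y_3$ is vertically contractible, while Corollary \ref{w38-cor} applied to the pairs $(T,T\s_1)$ and $(T,T\s_2)$ gives that $x_1$ and $x_2$ are vertically contractible. By Lemma \ref{biweb-notriangle}, $T$ is the unique triangle of $M$ meeting $W$, so $\{x_1,x_2,y_3\}$ (being different from $T$) is not a triangle and is therefore independent in the simple matroid $M$. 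Corollary \ref{biweb-rank-3} yields $r_M(W)\ge 4$, and since $r_M(W)\le r_M(T)+|\{x_1,x_2\}|=4$, we obtain $r_M(W)=4$; hence $\cl_M(W)$ is a hyperplane of $M$ and $D\s:=E(M)-\cl_M(W)$ is a cocircuit of $M$.

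The key observation is that any vertically contractible $a\in D\s$ lies outside $\cl_M(\{x_1,x_2,y_3\})\cont\cl_M(W)$, so $\{x_1,x_2,y_3,a\}$ is the required $4$-independent set of vertically contractible elements. It therefore suffices to exhibit a vertically contractible element of $M$ inside $D\s$. Standard $3$-connectivity gives $|D\s|\ge 3$ and, via $r_M(D\s)+r_M(\cl_M(W))-r(M)\ge 2$, also $r_M(D\s)\ge 3$. When $r_M(D\s)=3$, Proposition \ref{ando} applied to the rank-$3$ cocircuit $D\s$ settles the matter: either $D\s$ has at most one non-vertically-contractible element (and so already contains at least two vertically contractible ones) or Proposition \ref{ando} explicitly supplies one.

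The main obstacle is the case $r_M(D\s)\ge 4$, which I plan to handle by contradiction, assuming every element of $D\s$ is non-vertically-contractible. For any $a\in D\s$, the hypothesis that $M$ has no deletable elements provides a triad $T\s_a\ni a$; orthogonality between $T\s_a$ and $T$, combined with Corollary \ref{w38-cor}, forces $T\s_a\i T=\emp$, for otherwise $a\in T\s_a-T$ would already be vertically contractible. If $T\s_a\cont D\s$, Proposition \ref{ando} applied to the rank-$3$ cocircuit $T\s_a$ yields a vertically contractible element in $T\s_a\cont D\s$, contradicting our standing assumption. Hence $T\s_a$ must intersect $\cl_M(W)-T$, and here I invoke the vertbarrier machinery (Lemmas \ref{w36} and \ref{w37}, Corollary \ref{connected paradise}, and Lemma \ref{conf impl carambole}): strictness of $W$ forces the only triads of $M$ through $y_3$ to be $T\s_1$ and $T\s_2$, so any rank-$3$ cocircuit $C\s$ of $M$ with $y_3\in C\s$ and $a\in\cl_M(C\s)-C\s$ must satisfy $|C\s|\ge 4$ (the $3$-element options $T\s_1,T\s_2$ cannot place $a\in \cl_M(C\s)-C\s$, as their closures are contained in $\cl_M(W)$). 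Analyzing such $(C\s,a)$ then yields either a disconnected vertbarrier whose line is the triangle of a triweb of $M$ (contradicting our assumption) or a connected vertbarrier producing a vertically contractible element. The hardest step will be showing that this element must actually lie in $D\s$ rather than fall back into $\cl_M(W)$, which will require careful orthogonality bookkeeping against $T$, $T\s_1$, $T\s_2$ and uses the rank bound $r(M)=5$ crucially, since Corollary \ref{remendo} already handles the parallel replaceability question in all other ranks.
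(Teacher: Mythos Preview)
Your setup coincides with the paper's: take a strict biweb $W=x_1,x_2,y_1,y_2,y_3$, observe that $x_1,x_2,y_3$ are vertically contractible and independent, and then look for a fourth vertically contractible element in a cocircuit disjoint from $W$. The paper also finishes via Proposition~\ref{ando} applied to a rank-$3$ cocircuit outside $W$.

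The gap in your proposal is the ``main obstacle'' case $r_M(D\s)\ge 4$: your sketch there is not a proof (it is not clear how the vertbarrier machinery you invoke produces a vertically contractible element that must lie in $D\s$, and the appeal to Corollary~\ref{remendo} is irrelevant since that corollary concerns $(T,N)$-replaceability in other ranks, not existence of contractible elements in $D\s$). Fortunately this case is vacuous, and the paper disposes of it in one line that you missed: since $W=T\s_1\cup T\s_2$ is the union of two triads, $E(M)-W$ is the intersection of the two hyperplanes $E(M)-T\s_1$ and $E(M)-T\s_2$; as these hyperplanes are distinct (indeed $T\s_1\cap T\s_2=\{y_3\}$, so their union spans $M$), submodularity gives
\[
r_M(E(M)-W)\le r_M(E(M)-T\s_1)+r_M(E(M)-T\s_2)-r(M)=4+4-5=3.
\]
Hence $r_M(D\s)\le r_M(E(M)-W)\le 3$, forcing $r_M(D\s)=3$, and Proposition~\ref{ando} finishes the proof immediately. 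With this observation your argument is complete and essentially identical to the paper's; without it, the proposal is unfinished.
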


\begin{proof}
Let $W:=x_1,x_2,y_1,y_2,y_3$ be a biweb of $M$. We may assume that $W$ is strict. By Lemma \ref{contractible-b3} and Corollary \ref{w38}, the elements of $I:=\{x_1,x_2,y_3\}$ are vertically contractible in $M$. By Corollary \ref{biweb-rank-3}, $I$ is independent and $r_M(W)=4=r(M)-1$. Hence, $E(M)-W$ contains a cocircuit $C\s$. As $W$ is an union of two triads, $r_M(C\s)\le r_M(E(M)-W)\le r(M)-2=3$. So $C\s$ is a rank-$3$ cocircuit of $M$. By Proposition \ref{ando}, $C\s$ contains a vertically contractible element $z$ of $M$. Now, $I\u\{z\}$ is a $4$-independent set of vertically contractible elements of $M$.
\end{proof}

The next Lemma has an elementary proof, which will be omitted.

\begin{lemma}\label{free-closure}
If $\{A_1,\dots,A_n\}$ is a free family of $M$ and, for each $i=1, \dots, n$, $B_i\cont \cl_M(A_i)$, then $\{B_1,\dots,B_n\}$ is a free family of $M$.
\end{lemma}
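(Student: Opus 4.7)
The plan is to upgrade the hypothesis by replacing each $A_i$ with its closure $\cl_M(A_i)$, showing that $\{\cl_M(A_1),\dots,\cl_M(A_n)\}$ is itself a free family of $M$, and then using the fact that passing from a free family to any family of subsets preserves freeness. The workflow has three steps: disjointness of the closures, the rank identity for the closures, and a final restriction argument.

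First I would show that the closures $\cl_M(A_i)$ are pairwise disjoint. Suppose $p\in\cl_M(A_i)\i\cl_M(A_j)$ with $i\neq j$. Since $M$ is simple, $p$ is not a loop, so there is a circuit $C\cont A_i\u\{p\}$ through $p$ with $|C|\ge 2$. If $p\in A_j$, then $C$ is already a circuit of $M|(A_1\u\cdots\u A_n)$ meeting both $A_i$ and $A_j$, contradicting freeness. Otherwise there is also a circuit $C'\cont A_j\u\{p\}$ through $p$; strong circuit elimination applied to $C,C',p$ at a chosen element of $C-p$ yields a circuit of $M$ contained in $A_i\u A_j$ that meets $A_i$ but cannot lie entirely in $A_i$ (that would contradict the independence of $C-p$), hence meets both $A_i$ and $A_j$, again contradicting freeness.

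Next I would establish the rank identity $r_M(\cl_M(A_1)\u\cdots\u \cl_M(A_n))=\sum_i r_M(A_i)$. The inequality $\le$ follows by submodularity together with $r_M(\cl_M(A_i))=r_M(A_i)$; the reverse inequality follows from $A_1\u\cdots\u A_n\cont \cl_M(A_1)\u\cdots\u\cl_M(A_n)$ and the freeness hypothesis. Combined with the disjointness obtained above, this yields the direct-sum decomposition $M|(\cl_M(A_1)\u\cdots\u\cl_M(A_n))=M|\cl_M(A_1)\oplus\cdots\oplus M|\cl_M(A_n)$. Since restriction commutes with direct sums, for any $B_i\cont\cl_M(A_i)$ we immediately get $M|(B_1\u\cdots\u B_n)=M|B_1\oplus\cdots\oplus M|B_n$, which is the desired conclusion.

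The only substantive step is the disjointness of the closures: the paper's definition of a free family as a direct-sum decomposition is strictly stronger than the corresponding numerical rank identity, and this extra content is exactly what the circuit-elimination argument uses to exclude shared points between distinct closures. The rank identity and the final restriction step are routine matroid bookkeeping.
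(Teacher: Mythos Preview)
Your proof is correct and complete in substance. The paper itself omits the proof entirely (it states only that ``the next Lemma has an elementary proof, which will be omitted''), so there is no original argument to compare against; your write-up fills that gap appropriately.

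One small expository point in your disjointness step: when you write ``there is a circuit $C\cont A_i\u\{p\}$ through $p$'', this is only guaranteed when $p\notin A_i$. Since the direct-sum definition of a free family forces the $A_i$ to be pairwise disjoint, you may assume without loss of generality (after possibly swapping $i$ and $j$) that $p\notin A_i$, after which your argument goes through verbatim. It would be cleaner to say this explicitly. Your appeal to simplicity of $M$ to exclude loops is legitimate in context, since the paper fixes $M$ as a $3$-connected simple matroid from Section~\ref{fancy} onward; without looplessness the closures could genuinely share loops and the lemma would fail as stated.
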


\begin{lemma}\label{free-contraction}
If $\{A_1,\dots,A_n\}$ is a free family of $M/X$ and $r_{M/X}(A_i)=r_M(A_i)$ for each $i\in[n]$, then $\{X,A_1,\dots,A_n\}$ is a free family of $M$.
\end{lemma}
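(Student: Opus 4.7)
The plan is to unfold the definition of free family into a rank identity and then apply the standard contraction rank formula. Since $A_1,\dots,A_n$ are subsets of $E(M/X)=E(M)-X$, each $A_i$ is disjoint from $X$, so the identity $r_{M/X}(S)=r_M(S\cup X)-r_M(X)$ applies to $S=A_i$ and to $S=A_1\cup\cdots\cup A_n$.

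First I would observe that the hypothesis that $\{A_1,\dots,A_n\}$ is free in $M/X$ unfolds as
\[
r_{M/X}(A_1\cup\cdots\cup A_n)=\sum_{i=1}^{n} r_{M/X}(A_i),
\]
while the hypothesis $r_{M/X}(A_i)=r_M(A_i)$, combined with the contraction formula, is equivalent to
\[
r_M(X\cup A_i)=r_M(X)+r_M(A_i) \quad\text{for each } i\in[n].
\]

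Next I would apply the contraction formula to both sides of the first displayed equation; substituting the second equation into each summand on the right, the $n$ copies of $r_M(X)$ cancel against the $-n\,r_M(X)$ coming from the contraction formula, leaving
\[
r_M(X\cup A_1\cup\cdots\cup A_n)=r_M(X)+\sum_{i=1}^{n} r_M(A_i),
\]
which is precisely the assertion that $\{X,A_1,\dots,A_n\}$ is free in $M$.

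There is no real obstacle here: the lemma is a purely computational consequence of the contraction rank formula, and the only thing to keep track of is that $X$ is disjoint from $A_1\cup\cdots\cup A_n$, which is immediate from $A_i\subseteq E(M/X)$.
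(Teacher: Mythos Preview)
Your proposal is correct and takes essentially the same approach as the paper: both arguments reduce the freeness claim to the rank identity $r_M(X\cup A_1\cup\cdots\cup A_n)=r_M(X)+\sum_i r_M(A_i)$ via the contraction rank formula and the hypothesis $r_{M/X}(A_i)=r_M(A_i)$. The only cosmetic difference is that the paper frames the computation as an inequality chain (starting from subadditivity and closing the loop), whereas you compute the equality directly; the content is identical.
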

\begin{proof}
Note that:
\begin{eqnarray*}
r_M(X\u A_1\u\cdots\u A_n) &\le& r_M(X)+r_M(A_1)+\cdots+ r_M(A_n)\\
&=&r_M(X)+r_{M/X}(A_1)+\cdots+r_{M/X}(A_n)\\
&=&r_M(X)+r_{M/X}(A_1\u\cdots\u A_n)\\
&=&r_M(X\u A_1\u\cdots\u A_n)
\end{eqnarray*}
Thus equality holds above. This  implies the lemma.
\end{proof}

We say that a singleton subset of $E(M)$ is {\bf $(X,N)$-replaceable} in $M$ if its element is $(X,N)$-replaceable in $M$.

\begin{lemma}\label{lifter}
Suppose that $M/X\del Z$ is simple with an $N$-minor and $\{X_1,\dots,X_ n\}$ is a free family of $M/X\del Z$. Suppose also that, for each $X_i$, one of the following alternatives holds:
\begin{enumerate}
\item[(a)] For some $2$-subset $Y_i\cont X_i$, $\cl_M(Y_i)$ contains an $(X,N)$-replaceable element or an $N$-filament of $M$.
\item[(b)] $X_i$ is an $(X,N)$-replaceable singleton set of $M$.
\end{enumerate}
Then, $M$ has a free family $\{X,Z_1,\dots,Z_n\}$ such that, for $k\in[n]$, $Z_k$ is an $N$-filament or a vertically $N$-contractible singleton set of $M$.
\end{lemma}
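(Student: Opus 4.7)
The plan is, for each $i\in[n]$, to construct $Z_i$ as either a vertically $N$-contractible singleton or an $N$-filament of $M$, and then to apply the preliminary free-family lemmas to conclude that $\{X,Z_1,\ldots,Z_n\}$ is free in $M$. Freeness of $\{Z_1,\ldots,Z_n\}$ in $M/X$ will be obtained by Lemma~\ref{free-closure} for those $Z_i$ with $\cl_{M/X}(Z_i)=\cl_{M/X}(X_i)$, and by a matroid-transversal argument for the remaining $Z_i$; it is then promoted to freeness of $\{X,Z_1,\ldots,Z_n\}$ in $M$ by Lemma~\ref{free-contraction}, whose hypothesis $r_M(Z_i)=r_{M/X}(Z_i)$ (equivalently, $\{X,Z_i\}$ free in $M$) will be verified case by case.

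For case (a) with an $N$-filament $L_i\subseteq\cl_M(Y_i)$, set $Z_i:=L_i$: the rank-$2$ line $L_i$ and the rank-$2$ flat $\cl_M(Y_i)$ coincide in closure, giving $Z_i\subseteq\cl_M(X\cup X_i)$ and $\cl_{M/X}(Z_i)=\cl_{M/X}(X_i)$; freeness of the $X_i$'s in $M/X\del Z$ prevents $L_i\subseteq\cl_M(X)$, yielding $r_{M/X}(Z_i)=2=r_M(Z_i)$. For case (b), where $X_i=\{p_i\}$ is $(X,N)$-replaceable via a witness $I_i$ of vertically $N$-contractible elements, a single replaceable element need not have a vertically $N$-contractible parallel in $\cl_{M/X}(p_i)$, so I abandon strict closure containment here; instead, I apply a Rado-style matroid-transversal argument inside $M/X$ to pick $z_i\in I_i$ such that the family $\{z_i:i\text{ in case (b)}\}$ is independent in $M/X$ and remains independent from the case-(a) $Z_j$'s. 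The Rado condition holds because $\bigcup_{i\in\mathcal{S}}I_i$ spans all $p_i$ for $i\in\mathcal{S}$, and freeness of the $X_i$'s makes $\{p_1,\ldots,p_n\}$ independent in $M/X$. Set $Z_i:=\{z_i\}$.

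The hardest subcase is (a) with only an $(X,N)$-replaceable $p_i\in\cl_M(Y_i)$ and no $N$-filament in $\cl_M(Y_i)$: here $r_{M/X}(X_i)=2$ forces $Z_i$ to be an $N$-filament. From the vertical $N$-contractibility of $\{x,p_i\}$ for an appropriate $x$ arising from the witness set, Lemma~\ref{w36} produces an $(M,N)$-vertbarrier $(C^*,p_i)$. If $(C^*,p_i)$ is disconnected, Lemma~\ref{conf impl carambole} returns either an $N$-filament in the vertbarrier's line --- which orthogonality with $C^*$ confines to $\cl_M(Y_i)$, serving as $Z_i$ --- or a vertically $N$-contractible element there, which combined with a second such element found similarly assembles into an $N$-filament via Lemma~\ref{carambola potencial}. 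If $(C^*,p_i)$ is connected, Lemma~\ref{connected paradise} supplies vertically $N$-contractible elements spanning $\cl_M(C^*\cup p_i)$, two of which lying inside $\cl_M(Y_i)$ yield the desired filament again by Lemma~\ref{carambola potencial}.

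The main obstacle is this last subcase: producing an actual $N$-filament of $M$ sitting inside $\cl_M(X\cup X_i)$ from a single $(X,N)$-replaceable point requires careful navigation of the vertbarrier dichotomy while exploiting orthogonality to keep the filament inside the rank-$2$ flat $\cl_M(Y_i)$. The remainder of the proof is routine application of Lemmas~\ref{free-closure} and~\ref{free-contraction}, together with the matroid-transversal step that stitches the case-(a) and case-(b) constructions into a single free family.
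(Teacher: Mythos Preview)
Your proposal has a genuine gap, and it stems from a misreading of the target statement. You assume that when $X_i$ satisfies alternative (a) and has rank $2$ in $M/X\del Z$, the corresponding $Z_i$ must also have rank $2$, hence must be an $N$-filament. But the conclusion of the lemma imposes no such constraint: it only asks for a free family $\{X,Z_1,\dots,Z_n\}$ in $M$ with each $Z_k$ a filament or a vertically $N$-contractible singleton. There is no requirement that $r_{M/X}(Z_k)=r_{M/X}(X_k)$, nor that $\cl_{M/X}(Z_k)=\cl_{M/X}(X_k)$. So your ``hardest subcase'' is artificial: when $\cl_M(Y_i)$ contains an $(X,N)$-replaceable element $p_i$ but no $N$-filament, you may simply take $W_i:=\{p_i\}$ and treat it exactly as in case (b).

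Moreover, the argument you sketch for that subcase does not go through. Being $(X,N)$-replaceable gives $p_i\in\cl_M(X\cup I_i)-\cl_M(X)$ for some set $I_i$ of individually vertically $N$-contractible elements; it does \emph{not} provide any $x$ for which $\{x,p_i\}$ is vertically $N$-contractible, so the hypothesis of Lemma~\ref{w36} is unavailable. And even granting a vertbarrier, Lemma~\ref{carambola potencial} does not manufacture a filament from two vertically $N$-contractible points of a line: it requires the full cocircuit structure $(L-y_i)\cup x_i$ for each $i$, which you have not produced.

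The paper's proof avoids all of this by a uniform iterative exchange. First take $Y_i$ (the $2$-subset in case (a), or $X_i$ itself in case (b)); simplicity of both $M$ and $M/X\del Z$ gives $r_M(Y_i)=|Y_i|=r_{M/X\del Z}(Y_i)$, so Lemma~\ref{free-contraction} makes $\{X,Y_1,\dots,Y_n\}$ free in $M$. Then choose $W_i\subseteq\cl_M(Y_i)$ to be either an $N$-filament or an $(X,N)$-replaceable singleton; Lemma~\ref{free-closure} keeps $\{X,W_1,\dots,W_n\}$ free. Finally, process the $W_k$'s one at a time: if $W_k=\{w\}$ is $(X,N)$-replaceable but not vertically $N$-contractible, take a circuit $C$ with $w\in C\subseteq X\cup I\cup\{w\}$; freeness of the current family forces $w\notin\cl_M\bigl(X\cup Z_1\cup\cdots\cup Z_{k-1}\cup W_{k+1}\cup\cdots\cup W_n\bigr)$, so some $z\in C\cap I$ lies outside that closure, and setting $Z_k:=\{z\}$ preserves freeness. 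Your Rado-type argument for case (b) is a valid alternative to this exchange step, but once you drop the spurious rank constraint, the same exchange handles both (a) and (b) uniformly and more simply.
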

\begin{proof}By Lemma \ref{free-closure}, we may define a free family $\{X,Y_1,\dots,Y_n\}$ of $M/X\del Z$, choosing $Y_i$ as a $2$-subset of $X_i$ according to (a), provided (a) holds, and choosing $Y_i:=X_i$ otherwise. As $M$ and $M/X\del Z$ are simple, hence $1\le r_{M\del Z}(Y_i)=|Y_i|=r_{M/X\del Z}(Y_i)\le 2$ for each $i$. By Lemma \ref{free-contraction}, $\{X,Y_1,\dots,Y_n\}$ is a free family of $M\del Z$, and, therefore, of $M$. Next, for each $i$, define $W_i$ as a subset of $\cl_M(Y_i)$ that is an $N$-filament or an $(X,N)$-replaceable singleton set of $M$. By Lemma \ref{free-closure}, $\F_0:=\{X,W_1,\dots,W_n\}$ is free. Now, for $k\in[n]$, let us define $Z_k$ inductively in such a way that each $\F_k:=\{X,Z_1,\dots,Z_k,W_{k+1},\dots,W_n\}$ is free and $\F_n$ satisfies the Lemma.

If $W_k$ is an $N$-filament of $M$ or a vertically $N$-contractible singleton set of $M$, simply define $Z_k:=W_k$. Otherwise, $W_k$ is an $(X,N)$-replaceable but not vertically $N$-contractible singleton set $\{w\}$ of $M$. Let $I$ be a set of vertically $N$-contractible elements and $C$ a circuit of $M$ such that $w\in C\cont (X\u I)\u w$. As $\F_{k-1}$ is free and $w\in W_k$, hence $w\notin F:=\cl_M(X\u Z_1\u\dots\u Z_{k-1}\u W_{k+1}\u\dots\u W_n)$ and, therefore, there is an element $z\in C-(F\u w)\cont I$. Since $z\in I$, then $z$ is vertically $N$-contractible. Define $Z_k=\{z\}$. Since $\F_{k-1}-\{W_k\}$ is free and $z\notin F$, then $\F_k=(\F_{k-1}-\{W_k\})\u\{Z_k\}$ is free. The family $\F_n$ satisfies the Lemma.\end{proof}

\section{Proofs for the main results}
\label{sec-proofs}
\begin{proofof}\emph{Proof of Theorem \ref{main-theorem}: }Suppose that $(M,N)$ is a counter-example to the theorem minimizing $|E(M)|$. By Theorem \ref{whittle-teo}, the theorem holds for $k\le 4$. So, $k\ge 5$. It is straightforward to verify that $M$ is not a wheel or whirl: in such case $N$ would be also a wheel or whirl respectively or $N\le U_{2,4}$, implying that the elements in the rim of $M$ are vertically $N$-contractible. By Seymour's Splitter Theorem, exactly one of the following cases occur:
\begin{enumerate}
\item [(i)] $M$ has an $N$-deletable element.
\item [(ii)] $M$ has an $N$-biweb and no $N$-deletable elements.
\item [(iii)] $M$ has an $N$-contractible element $x$ and (i) and (ii) do not occur.
\end{enumerate}

Next, we complete the proof for each case.

{\it Case (i):} Define $X:=\emp$ and $Z$ as an $N$-deletable singleton set of $M$. Let $\F:=\{X_1,\dots,X_n\}$ be a family satisfying Theorem \ref{main-theorem} for $M\del Z=M\del Z/X$. By Corollary \ref{cunningham-cor} and Lemma \ref{deletion-carambole}, $\F$ satisfies the hypothesis of Lemma \ref{lifter}. The family $\{Z_1,\dots,Z_n\}$ obtained from Lemma \ref{lifter} satisfies the theorem since $r(M)=r(M\del Z)$ in this case.

{\it Case (ii):} If $r(M)=5$, then $r(N)=0$ and the theorem follows from Lemma \ref{rank5}. Assume that $r(M)\ge 6$ for this case. Let $W$ be an $N$-biweb of $M$ with triangle, namely, $T$. By Lemma \ref{contractible-triangle}, $M/T$ is $3$-connected with an $N$-minor. Let $\F:=\{X_1,\dots,X_n\}$ be a family satisfying Theorem \ref{main-theorem} for $M/T$. Define $X:=T$ and $Z:=\emp$. The hypothesis of Lemma \ref{lifter} are satisfied because of Lemma \ref{remendo} and Lemma \ref{up-fil-biweb}. Let $\{X,Z_1,\dots,Z_n\}$ be a free family of $M$ as in Lemma \ref{lifter}. If $W$ is part of a carambole, then $X$ is an $N$-filament and the theorem holds. Otherwise, if $W$ is a strict biweb, then, by Corollary \ref{contractible-b3}, $X$ contains a vertically $N$-contractible element $x$ of $M$ and $\{\{x\},Z_1,\dots,Z_n\}$ satisfies the theorem.

{\it Case (iii):} Define $Z:=\emp$ and $X:=\{x\}$. Let $\F:=\{X_1,\dots,X_n\}$ be a family satisfying Theorem \ref{main-theorem} for $M/x=M/X\del Z$. By Lemma \ref{no deletable}, all vertically $N$-contractible elements of $M/x$ are $(X,N)$-replaceable in $M$. So, by Lemma \ref{upcont}, the hypothesis of Lemma \ref{lifter} are once more satisfied. The family $\{X,Z_1,\dots,Z_n\}$, obtained from Lemma \ref{lifter}, satisfies the theorem as before. This completes the proof.
\end{proofof}

\begin{proofof}\emph{Proof of Proposition \ref{independent-seeds}: }
By Proposition \ref{1st-prop}, $|X|=n$, where $n$ is the size of the filament. Moreover,  by orthogonality with the cocircuits of the carambole, $X$ may not contains circuits. If some triangle meets $X$, then by orthogonality, it must meet $L$, a contradiction to Corollary \ref{triangle-carambole}. Now, the elements of $X$ are $N$-contractible because of Lemma \ref{w38}. So, the proposition holds.
\end{proofof}

\begin{proofof}\emph{Proof of Corollary \ref{k4}: }
The proof is analogous to the proof of Theorem \ref{main-theorem}. Consider a counter-example minimizing $|E(M)|$. The proof in case (i) is the same. In case (ii), by Corollaries \ref{w38-cor} and \ref{contractible-b3}, we have the result. In case (iii), instead of the minimality of $M$, we use Theorem \ref{whittle-teo} for $k=3$ to obtain $\F$ and, in the same way, finish the proof.\end{proofof}

\begin{proofof}\emph{Proof of Corollary \ref{k5}: }
Consider the family $\{X_1,\dots,X_4\}$ given by Theorem \ref{main-theorem}. We may assume that $X_1$ is an $N$-filament with more than $3$ elements. By Proposition \ref{independent-seeds}, $M$ has a $4$-independent set of vertically $N$-contractible elements.
\end{proofof}

With the lemmas we established here, it is possible to give an alternative proof for Theorem \ref{whittle-teo}. This is interesting for the self-sufficiency of this work.\medskip

\begin{proofof}{\emph{Proof of Theorem \ref{whittle-teo}: } }
Suppose that $M$ and $N$ contradict the theorem, minimizing $(k, |E(M)|)$ lexicographically. If $k=1$, the result follows directly form Seymour's Splitter Theorem and Bixby's Lemma (this is made with details in \cite[Lemma 12.3.11]{Oxley}). So, $k\ge 2$. If $r(M)\le 3$, then $r(N)\le 1$ and the result is trivial. Suppose that $r(M)\ge 4$. By Proposition \ref{independent-seeds}, $M$ has no $N$-caramboles. By Corollaries \ref{w38-cor} and \ref{contractible-b3}, $M$ has no $N$-biwebs. By Corollary \ref{cunningham-cor} and the minimality of $|E(M)|$, $M$ has no $N$-deletable elements. By Seymour's Splitter Theorem $M$ has an $N$-contractible element $x$. By the minimality of $k$, there is a free family $\F$ with $k-1$ vertically $N$-contractible singleton sets of $M/x$. Now, the proof is finished as in case (iii) of the proof of Theorem \ref{main-theorem}.
\end{proofof}

\section*{Acknowledgments}

The Author thanks to CNPq for the financial support, to Manoel Lemos for the help with raising funds with CNPq and to James Oxley for noticing the relations between carambole and generalized delta-wye exchanges.

\providecommand{\bysame}{\leavevmode\hbox to3em{\hrulefill}\thinspace}
\providecommand{\MR}{\relax\ifhmode\unskip\space\fi MR }
% \MRhref is called by the amsart/book/proc definition of \MR.
\providecommand{\MRhref}[2]{%
  \href{http://www.ams.org/mathscinet-getitem?mr=#1}{#2}
}
\providecommand{\href}[2]{#2}

\end{document}